\newcommand{\e}{\varepsilon}   
\newcommand{\RR}{\mathbb{R}}    
\newcommand{\ind}{\mathbbm{1}} 
\newcommand{\C}{\mathcal{C}}    
\newcommand{\M}{\mathcal{M}}    %
\newcommand{\NN}{\mathbb{N}}
\renewcommand{\O}{\Omega}    %
\renewcommand{\SS}{\mathbb{S}}
\newcommand{\PP}{\mathbb{P}}    
\newcommand{\EE}{\mathbb{E}}     
\newcommand{\sub}{\subseteq}    
\providecommand{\rpar}[1]{\left( #1 \right)}               
\providecommand{\kpar}[1]{\left\{ #1 \right\}}              
\providecommand{\spar}[1]{\left[  #1 \right]}              
\providecommand{\abs}[1]{\left\vert #1 \right\vert}           
\providecommand{\dd}[2] {\frac{d #1 }{d #2}}            
\numberwithin{equation}{section}
\theoremstyle{plain}
\newtheorem{theorem}{Theorem}[section]
\newtheorem{lemma}[theorem]{Lemma}
\newtheorem{corollary}[theorem]{Corollary}
\newtheorem{remark}{Remark}[section]
\theoremstyle{definition}
\newcommand{\dct}{Dominated Convergence Theorem}
\begin{document}

\begin{frontmatter}

\title{Asymptotics for the heat kernel in multicone domains}
\runtitle{Heat kernel in multicone domains}

\begin{aug}
\author{\fnms{Pierre} \snm{Collet}\thanksref{t1}\ead[label=e1]{pierre.collet@cpht.polytechnique.fr}},
\author{\fnms{Mauricio} \snm{Duarte}\thanksref{t2}\ead[label=e2]{mauricio.duarte@unab.cl}},
\author{\fnms{Servet} \snm{Mart\'inez}\thanksref{t1}\ead[label=e3]{smartine@dim.uchile.cl}}, 
\author{\fnms{Arturo} \snm{Prat-Waldron}\ead[label=e4]{arturo@mpim-bonn.mpg.de}}
\and
\author{\fnms{Jaime} \snm{San Mart\'in}\thanksref{t1}\ead[label=e5]{jsanmart@dim.uchile.cl}}

\thankstext{t1}{We thank the Center for Mathematical Modeling (CMM) Basal CONICYT Program PFB 03.}
\thankstext{t2}{Thanks for the support from proyect FONDECYT 3130724, and the Programa Iniciativa Cientifica Milenio grant number NC130062 through the Nucleus Millenium Stochastic Models of Complex and Disordered Systems..}

\runauthor{P. Collet et al.}

\affiliation{Ecole Polytechnique, Universidad Andres Bello, Centro de Modelamiento Matem\'atico and Max Planck Institute for Mathematics}

\address{P. Collet,\\CNRS Physique Th\'eorique\\
Ecole Polytechnique,\\
91128 Palaiseau cedex, France,\\ \printead{e1}}

\address{M. Duarte,\\
Departamento de Matem\'atica,\phantom{ali\hspace{0.665cm}gn}\\
Universidad Andres Bello,\\
Rep\'ublica 220, Santiago, Chile,\\ \printead{e2}}

\address{S. Mart\'inez, J. San Mart\'in,\\
Departamento de Ingenier\'ia Matem\'atica,\\
Facultad de Ciencias F\'isicas y Matem\'aticas,\\ 
Universidad de Chile, \\
Beauchef 851, torre norte, piso 5,\\
Santiago, Chile\\ \printead{e3}\\
\printead{e5}}

\address{A. Prat-Waldron,\\
Max Planck Institute for Mathematics,\\ Vivatsgasse 7,\\
53111 Bonn, Germany,\\ \printead{e4}}

\end{aug}

\begin{abstract}
. A multi cone domain $\Omega \subseteq \mathbb{R}^n$ is an open, connected set that resembles a finite collection of cones far away from the origin. We study the rate of decay in time of the heat kernel $p(t,x,y)$ of a Brownian motion killed upon exiting $\Omega$, using both probabilistic and analytical techniques. We find that the decay is polynomial and we characterize  $\lim_{t\to\infty} t^{1+\alpha}p(t,x,y)$ in terms of the Martin boundary of $\Omega$ at infinity, where $\alpha>0$ depends on the geometry of  $\Omega$. We next derive an analogous result for $t^{\kappa/2}\mathbb{P}_x(T >t)$, with $\kappa = 1+\alpha - n/2$, where $T$ is the exit time form $\Omega$. Lastly, we deduce the renormalized Yaglom limit for the process conditioned on survival.
\end{abstract}

\begin{keyword}[class=AMS]
\kwd[Primary ]{60J65}
\kwd{35K08}
\kwd{35B40}
\kwd[; secondary ]{60H30.}
\end{keyword}

\begin{keyword}
\kwd{Brownian motion}
\kwd{heat kernel}
\kwd{harmonic functions}
\kwd{renormalized Yaglom limit}
\end{keyword}

\end{frontmatter}

\section{Introduction}
\label{se:intro}
\newcommand{\bd}{\mathfrak{S}}
\renewcommand{\d}{\mathfrak{D}}
\renewcommand{\k}{N}
\newcommand{\zed}{2^\alpha \Gamma(1+\alpha)}
\setlength{\parskip}{0.2cm}

Let $O$ be a domain (open and connected set) in $\RR^n$, regular  for the Dirichlet problem. Consider an $n-$dimensional Brownian motion $B_t$ starting from the interior of  $O$,  with exit time $T^O$. 
The heat kernel $p^O(t,x,y)$ is the Radon-Nikodym derivative of the Borel measure $ A\mapsto\PP_x(B_t\in A, T^O>t)$ with respect to the $n-$dimensional Lebesgue measure, and it is characterised to be the fundamental solution of the heat equation with Dirichlet boundary condition, that is: as a function of $(t,y)$ it solves the heat equation $\partial_t u=\frac12\Delta u$, it vanishes continuously on $\partial O$, and it satisfies the initial condition $u(0,y)=\delta_x(y)$. 

It is well known that $p^O(t,x,y)$ tends to zero as time grows to infinity. A classical problem is to find the exact asymptotic (in time) for the decay of the heat kernel and the survival probability. This is well understood  for bounded domains (see \cite{Pin90} and \cite{Pin95}). For results in some planar domains we refer the reader to \cite{BaD89}. The large time asymptotic problem is treated in \cite{Pin92} for a large class of (non symmetric) diffusions under some integrability conditions on the ground state. Exact asymptotic are computed for Benedicks domains in \cite{CMS99}, and for exterior domains in \cite{CMS00}. Our work focuses on finding the exact asymptotic in time for $p^\O(t,x,y)$ and $\PP_x(T^\O>t)$ for a multicone domain $\O$, which we define next.

Let $\SS^{n-1}=\kpar{x\in\RR^n : \abs{x}=1}$ be the unit sphere in $\RR^n$. Points in $\RR^n$ will be regarded as $x=r\theta$, where $r=\abs{x}$ and $\theta\in\SS^{n-1}$. Given a Lipschitz, proper subdomain $\d$ of $\SS^{n-1}$, and a vector $a\in\RR^n$, a truncated cone with opening $\d$ and vertex $a$ is the set 
$$
C(a,\d,R)=\kpar{a+x : x=r\theta\in\RR^n : r>R,\ \theta\in \d},
$$ 
where $R\ge 0$.  When $R>0$, the set $\bd = a+R \d$ will be called the \emph{base} of the truncated cone. When $R=0$, we will refer to the set in the previous display as {cone with vertex $a$}.

In the same context as above, given a base $\bd=a+R\d$, let $0<\lambda^1<\lambda^2\leq \lambda^3\leq\cdots$ be the eigenvalues of the Laplace-Beltrami operator on $\d$, with  corresponding orthonormal basis $\{m^1,m^2,m^3,\ldots\}$ of $L^2(\d,\sigma)$, where $\sigma$ is the surface measure on $\SS^{n-1}$. Let $\alpha^i = \rpar{\lambda^i+(\frac{n}2-1)^2}^{1/2}$. We define the character of the base $\bd$ as the number $\alpha=\alpha(\d)=\alpha^1$. The character of the truncated cone $C(a,\d,R)$ is also defined as $\alpha$. 

A multicone domain $\O\sub\RR^n$ is a connected, open set such that there exists a bounded domain $\O_0\sub\O$ and finitely many truncated cones $\O_j = C(a_j,\d_j,R_j)$, with $j=1,\ldots \k$, such that $\O_j\cap \O_i=\emptyset$ for $1\leq j < i \leq \k$, and
$$
\O\setminus\overline\O_0 = \bigcup_{j=1}^\k \O_j. 
$$
Here $\overline{\O}_0$ is the closure of the set $\O_0$. The set $\O_0$ will be called the \emph{core}, and for $j\geq 1$, the sets $\O_j$ are called \emph{branches} of the multi-cone set. Notice that by construction, the branches are disjoint from the core. Also, we will denote the base of the truncated cone $\O_j$ by $\bd_j$. Without loss of generality, we can assume that $R_j=1$, which makes the exposition that follows much easier. 
%
%
%
The character of the truncated cone $\O_j$ will be denoted by $\alpha_j$. We define the character of the multicone $\O$ as the number $\alpha=\min\kpar{\alpha_j : j=1,\ldots,\k}$. An index $l$ such that $\alpha_l=\alpha$ will be called maximal. We denote by $\mathcal{M}$ the set of maximal indices.

To state the main results of this article, we need to introduce the Martin boundary at infinity for $\O$.




It is well known that there is a unique minimal harmonic function $w$ on a  cone with vertex $\C_0=C(a,\d,0)$ that vanishes continuously on $\partial \C_0$. Actually, there is only one positive harmonic function in $\C_0$ that vanishes continuously on its boundary (Theorem 1.1 in \cite{Anc12}). For $x=a+\abs{x-a}\theta\in \C_0$ this function is given by:
\begin{align}
\label{eq:coneharmonic}
v(x)=\abs{x-a}^{\alpha-\rpar{\frac{n}{2}-1}}m^1(\theta),
\end{align} 
where $\alpha$ is the character of $\d$ and $m^1$ is the first eigenfunction of the Laplace-Beltrami operator on $\d$. Notice how we have chosen to normalize $w$ in terms of the normalization of $m^1$ in $L^2(\d,\sigma)$. In order to simplify our exposition, we set $\kappa = 1+\alpha-n/2$, so that $v(x)=\abs{x-\alpha}^{\kappa}m^1(\theta)$. 

Similarly, if $\C=C(a,\d,R)$ is a truncated cone, there is a unique (minimal) positive harmonic function $w$ in $\C$ that vanishes continuously on $\partial\C$, which is defined as follows: let $T^\C$ be the exit time of a Brownian motion $B_t$ from the cone $\C$. Then
\begin{align}
\label{eq:truncatedharmonic}
w(x)=v(x) - \EE_x(v(B_{T^\C})),\qquad x\in\C.
\end{align}

Let $w_j$ be the unique minimal harmonic function in $\O_j$.
By a standard balayage argument \cite{Mat56}, one can extend $w_j$ to a minimal harmonic function in $\O$. Such extension is given by
\begin{align}
\label{eq:harmonic_extension}
u_j(x) = w_j(x)\ind_{\O_j}(x) + \frac12 \int_{\bd_j} G(x,y) \partial_n{w_j}(y)\sigma_j(dy),\qquad x\in\O,
\end{align}
where $\partial_n$ denotes the (inward) normal derivative on $\bd_j$, and $\sigma_j$ is the translation of $\sigma$ by $a_j$, and $G$ is the Green function of the domain $\O$:
$$
G(x,y) = \int_0^\infty p(t,x,y) dt.
$$

Reciprocally, we have that
\begin{align}
\label{eq:harmonic_extension_2}
w_j(x) &= u_j(x) -\EE_x u(B_{T^j}),\qquad x\in \O_j,
\end{align}
where $B$ is an $n-$dimensional Brownian motion, stopped at its exit time $T^j$ from $\O_j$. 

It is direct to verify from the last two equations that the function $u_j$ is bounded in $\O\setminus\O_j$, and satisfies that for $x=a_j+r\theta$, 
\begin{align}
\label{eq:limit_1}
\lim_{r\to\infty} \frac{u_j(a_j+r\theta)}{w_j(a_j+r\theta)} =1, 
\end{align}
for fixed $\theta\in \d_j$.


We are ready to state the main results of this paper.

\begin{theorem}
\label{th:mainkernel}
Let $\O$ be a multicone domain with branches $\O_1,\ldots,\O_\k$. Let $\alpha>0$ be the character of $\O$, and let $\M$ be the set of maximal indices. Then,
\begin{align}
\label{eq:mainkernel}
\lim_{t\to\infty} t^{1+\alpha}p(t,x,y) = \frac{1}{2^\alpha\Gamma(1+\alpha)} \sum_{l\in\M} u_l(x)u_l(y),
\end{align} 
The limit is in the topology of uniform convergence on compact sets.
\end{theorem}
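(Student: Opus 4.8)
The plan is to combine a spectral/eigenfunction-expansion intuition with precise parabolic boundary estimates, organized around two comparison principles: one relating the heat kernel of $\O$ to those of its branches (which are truncated cones), and one handling the bounded core. First I would establish the single-cone case as a model computation: for a truncated cone $\C = C(a,\d,1)$, separate variables in polar coordinates. The heat kernel on the cone factors through the spectral decomposition of the Laplace--Beltrami operator on $\d$; the radial part is a Bessel-type problem whose long-time behaviour is governed by the lowest mode $\alpha^1 = \alpha$, producing a decay $t^{-(1+\alpha)}$ with constant $1/(2^\alpha\Gamma(1+\alpha))$ and spatial profile given by the minimal harmonic function $w$ of \eqref{eq:truncatedharmonic}. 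This is where the precise constant $2^\alpha\Gamma(1+\alpha)$ comes from, via the asymptotics of modified Bessel functions (equivalently, the self-similar Brownian-motion-in-a-cone computation). The contribution of the higher angular modes $\lambda^i > \lambda^1$ decays strictly faster, like $t^{-(1+\alpha^i)}$, and is therefore negligible.

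Next I would transfer this to the full multicone domain. The key structural fact is that a Brownian path contributing to $p(t,x,y)$ for large $t$ must, with overwhelming probability, spend almost all of its time deep inside one of the branches $\O_l$, because escaping to infinity within a branch is the ``cheapest'' way to survive. More precisely, I would use the Chapman--Kolmogorov / semigroup identity $p(t,x,y) = \int p(s,x,z)\,p(t-s,z,y)\,dz$ with $s$ a large fixed constant, together with an intrinsic-ultracontractivity-type or Harnack-chain argument to show that the mass of $p(s,x,\cdot)$ relevant at time $t$ concentrates near the bases $\bd_l$ of the maximal branches, where the profile is captured by $u_l(x)$ via \eqref{eq:harmonic_extension}--\eqref{eq:limit_1}. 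The branches with $\alpha_j > \alpha$ contribute a lower-order term $O(t^{-(1+\alpha_j+\text{something})})$ and drop out; this is why the sum runs over $\M$ only. The function $u_l$ appears because it is precisely the harmonic extension that ``remembers'' the branch-$l$ escape route while being bounded elsewhere, so $t^{1+\alpha}p(t,x,y) \to c\sum_{l\in\M} u_l(x)u_l(y)$ by matching the cone asymptotics branch by branch and summing.

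The technical backbone will be a set of parabolic boundary Harnack and Carleson-type estimates, uniform in time, allowing one to compare $p(t,x,y)$ inside a branch with the corresponding truncated-cone heat kernel up to multiplicative constants tending to $1$; for this I would invoke Ancona-type boundary Harnack results for cones (consistent with the citation \cite{Anc12}) and gluing estimates across the core $\O_0$, which is a bounded Lipschitz domain where standard heat-kernel bounds apply. To get convergence (not just two-sided bounds) one upgrades the comparison to an asymptotic equality: write $p(t,x,y) = \sum_{l} p_{\O_l}(t,x,y) + (\text{error})$ using the strong Markov property at the exit time from the branches, control the error by the faster-decaying core contribution, and pass to the limit using \eqref{eq:limit_1} to replace $w_l$ by $u_l$ and symmetrize in $x,y$. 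Uniform convergence on compact sets then follows from the locally uniform nature of all the estimates plus an equicontinuity (parabolic regularity) argument.

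I expect the main obstacle to be the \emph{lower bound matching the constant}: showing that no survival strategy beats ``run out one maximal branch'' and that the interference/cross terms between different branches and between the core and the branches are genuinely lower order. Establishing the exact coefficient $1/(2^\alpha\Gamma(1+\alpha))$ requires the sharp Bessel asymptotics in the single-cone computation and then propagating that sharpness through the gluing — any loss of a constant factor in the Harnack comparisons must be shown to vanish as $t\to\infty$, which is the delicate quantitative heart of the argument.
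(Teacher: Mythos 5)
Your outline gets the first half right: Theorem \ref{th:hkvertex} (the Bessel expansion on a cone with vertex), Corollary \ref{co:vertextimelimit}, and the extension to a truncated cone (Theorem \ref{th:vertextruncated}) are indeed the model computation, and the decomposition via the strong Markov property at $T^j$ (equation \eqref{eq:strongmarkov}, $p(t,x,y) = p^j(t,x,y) + I^{x,y}_j(0,t;t)$) is exactly the starting point for the gluing step. You also correctly anticipate that non-maximal branches drop out because $\alpha_j > \alpha$.

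The genuine gap is in how you propose to control the gluing error. You lean on ``parabolic boundary Harnack and Carleson estimates, uniform in time, allowing one to compare $p(t,x,y)$ inside a branch with the corresponding truncated-cone heat kernel up to multiplicative constants tending to $1$,'' and you say that ``any loss of a constant factor in the Harnack comparisons must be shown to vanish as $t\to\infty$.'' That mechanism does not work: the parabolic Harnack constants in Theorem \ref{th:harnackboundary} are fixed geometric constants and do not tend to $1$, so a naive comparison only gives a two-sided bound $c\,p^m \leq p \leq C\,p^m$ with $c < 1 < C$, never asymptotic equality of the constant. The paper resolves this with two ideas you are missing. First, an a priori finiteness result (Lemma \ref{th:limitsintexist}): one proves $\limsup_{t\to\infty} t^{1+\alpha}p(t,x,x) < \infty$ not by Harnack matching but by choosing $x = a_m + r\widehat{x}$ with $r$ large so that, via \eqref{eq:choiceofx}, the return probability to the base is small enough to get a contraction $\varphi(t) \leq C_3 + \theta\,\varphi(t/2)$ with $\theta < 1$, which bootstraps to boundedness. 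Second, given this bound, Lemma \ref{le:limitharmonic} together with the Martin boundary characterization (Lemma \ref{le:extensions}) shows every subsequential limit of $t^{1+\alpha}p(t,\cdot,\cdot)$ has the form $\sum_{i,j}\eta_{ij}u_i(x)u_j(y)$; one then plugs this form into the strong Markov identity, uses the already-established truncated-cone asymptotics for $I_1, I_2$, and sends $x$ radially to infinity in $\O_m$ to force $\eta_{mj} = \ind_{\M}(m)\delta_{mj}/(2^\alpha\Gamma(1+\alpha))$ by uniqueness of the representation \eqref{eq:uniqueharmonic}. Without the contraction step and without the Martin boundary parametrization of limit points, the sharp constant cannot be pinned down by the route you describe. (Also a minor misdiagnosis: the lower bound $\liminf > 0$ is the easy direction, by domain monotonicity $p \geq p^m$ and Theorem \ref{th:vertextruncated}; the hard direction is the upper bound, which is where the contraction is needed.)
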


\begin{theorem}
\label{th:mainexit}
Let $\O$ be a multicone domain with branches $\O_1,\ldots,\O_\k$. Let $\alpha>0$ be the character of $\O$, and let $\M$ be the set of maximal indices. Set $\kappa = 1+\alpha-n/2$. Then
\begin{align}
\label{eq:mainexit}
\lim_{t\to\infty} t^{\kappa/2}\PP_x(T>t) =\frac{\Gamma\rpar{\frac{\kappa+n}{2}}}{2^{\kappa/2}\Gamma\rpar{\kappa + \frac{n}{2}}} \sum_{l\in\M} \rpar{\int_{\d_l} m^1_l(\theta) \sigma(d\theta)} u_l(x).
\end{align}
The limit is in the topology of uniform convergence on compact sets.
\end{theorem}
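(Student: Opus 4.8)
The plan is to start from $\PP_x(T>t)=\int_\O p(t,x,y)\,\ud y$ and evaluate this integral branch by branch, after a parabolic rescaling of the space variable. Write $\O=\O_0\cup\bigcup_{j=1}^\k\O_j$ and split the integral accordingly. Over the (bounded) core, Theorem~\ref{th:mainkernel} gives $\int_{\O_0}p(t,x,y)\,\ud y\sim\frac{t^{-(1+\alpha)}}{2^\alpha\Gamma(1+\alpha)}\sum_{l\in\M}u_l(x)\int_{\O_0}u_l(y)\,\ud y=o\rpar{t^{-\kappa/2}}$, since $1+\alpha>\kappa/2$. For a non-maximal branch $\O_j$ (character $\alpha_j>\alpha$), the corresponding contribution $\int_{\O_j}p(t,x,y)\,\ud y=\PP_x(B_t\in\O_j,\,T>t)$ is $o\rpar{t^{-\kappa/2}}$: by Theorem~\ref{th:mainkernel} the limit $g(y)=\frac{1}{2^\alpha\Gamma(1+\alpha)}\sum_{l\in\M}u_l(x)u_l(y)$ is bounded on $\O_j$ and, being a bounded harmonic function vanishing on the lateral boundary, decays there at infinity; this together with standard Gaussian heat-kernel bounds and the strict inequality $\kappa_j=1+\alpha_j-n/2>\kappa$ makes the term of smaller order. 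Thus only the maximal branches contribute to the leading term.

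Fix $l\in\M$, write $y=a_l+r\theta$ with $r>1$, $\theta\in\d_l$, and substitute $r=\sqrt t\,\rho$:
\begin{equation*}
t^{\kappa/2}\int_{\O_l}p(t,x,y)\,\ud y=t^{(\kappa+n)/2}\int_{\d_l}\int_{1/\sqrt t}^{\infty}p\rpar{t,x,a_l+\sqrt t\,\rho\,\theta}\,\rho^{\,n-1}\,\ud\rho\,\sigma(\ud\theta).
\end{equation*}
The crucial input is an intermediate-scale version of Theorem~\ref{th:mainkernel}: for fixed $\rho>0$, $\theta\in\d_l$, and uniformly for $x$ in compact sets,
\begin{equation*}
t^{(\kappa+n)/2}\,p\rpar{t,x,a_l+\sqrt t\,\rho\,\theta}\ \longrightarrow\ \frac{u_l(x)}{2^\alpha\Gamma(1+\alpha)}\,\rho^{\,\kappa}\,m^1_l(\theta)\,e^{-\rho^2/2}\qquad(t\to\infty),
\end{equation*}
together with a domination $t^{(\kappa+n)/2}p\rpar{t,x,a_l+\sqrt t\,\rho\,\theta}\le C_K\,\rho^{\,\kappa}e^{-\rho^2/4}$ for all $t\ge1$ and $x$ in a compact $K$. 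Since $1+\alpha=\kappa+n/2$, the shift of exponent together with the extra Gaussian factor accounts exactly for $u_l(a_l+\sqrt t\rho\theta)\sim t^{\kappa/2}\rho^\kappa m^1_l(\theta)$ and the diffusive spreading at scale $\sqrt t$. Granting this, dominated convergence yields
\begin{equation*}
t^{\kappa/2}\int_{\O_l}p(t,x,y)\,\ud y\ \longrightarrow\ \frac{u_l(x)}{2^\alpha\Gamma(1+\alpha)}\rpar{\int_{\d_l}m^1_l(\theta)\,\sigma(\ud\theta)}\int_0^\infty\rho^{\,\kappa+n-1}e^{-\rho^2/2}\,\ud\rho.
\end{equation*}
Using $\int_0^\infty\rho^{\kappa+n-1}e^{-\rho^2/2}\,\ud\rho=2^{(\kappa+n)/2-1}\Gamma\rpar{\tfrac{\kappa+n}{2}}$ and the identities $\Gamma(1+\alpha)=\Gamma\rpar{\kappa+\tfrac n2}$, $2^{(\kappa+n)/2-1}2^{-\alpha}=2^{-\kappa/2}$ (both immediate from $\kappa+n/2=1+\alpha$), the right-hand side equals $\frac{\Gamma((\kappa+n)/2)}{2^{\kappa/2}\Gamma(\kappa+n/2)}\rpar{\int_{\d_l}m^1_l\,\sigma(\ud\theta)}u_l(x)$. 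Summing over $l\in\M$ and adding the $o(t^{-\kappa/2})$ contributions from the core and the non-maximal branches gives \eqref{eq:mainexit}; uniformity on compact sets in $x$ is inherited from the uniform estimates used.

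The main obstacle is the intermediate-scale heat kernel statement: Theorem~\ref{th:mainkernel} only controls $t^{1+\alpha}p(t,x,y)$ for $y$ in a fixed compact set, whereas the bulk of $\PP_x(T>t)$ comes from $y$ at distance of order $\sqrt t$ inside the branches---a regime in which a Gaussian factor emerges and the cone harmonic $v$ of \eqref{eq:coneharmonic} must be traded for the global harmonic function $u_l$ via \eqref{eq:limit_1}. To obtain both the scaled limit and the domination I would compare $p(t,x,y)$, for $y\in\O_l$, with the heat kernel of the full cone $C(a_l,\d_l,0)$, whose radial factor is an explicit Bessel heat kernel: its leading ($k=1$) angular mode produces precisely $\rho^\kappa m^1_l(\theta)e^{-\rho^2/2}$, while the higher modes and the difference between $p$ and the cone kernel must be shown to be of strictly lower order, uniformly over $1\le r\lesssim\sqrt t$. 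This uniform comparison---boundary Harnack estimates together with Gaussian-type heat-kernel bounds in the cone---is precisely the kind of input underlying the proof of Theorem~\ref{th:mainkernel}, so I expect it to be available from that argument with only modest additional work; still, carrying it out uniformly on the unbounded scaling region is the real technical content of the proof.
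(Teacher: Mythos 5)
Your strategy is genuinely different from the paper's. The paper derives the Green-identity \eqref{eq:time_formula}, proves the truncated-cone analogue of \eqref{eq:mainexit} by transferring Ba\~nuelos--Smits' Theorem \ref{th:smits} from the full cone through a boundary convolution, and then assembles the multicone case by estimating the boundary terms branch by branch with the normal-derivative bounds already in hand. You instead write $\PP_x(T>t)=\int_\O p(t,x,y)\,dy$ and evaluate the leading term by parabolic rescaling inside each maximal branch. The ``intermediate-scale estimate'' you correctly single out as the crucial input is exactly Theorem \ref{th:mainyaglom} of the paper written in polar coordinates: note $\beta = 1+\alpha+n/2 = \kappa+n$ and $v_l(\rho\theta)=\rho^\kappa m^1_l(\theta)$. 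Since the proof of Theorem \ref{th:mainyaglom} depends only on Theorem \ref{th:mainkernel} and not on Theorem \ref{th:mainexit}, your logical order is admissible (prove the Yaglom-scale estimate first, then integrate), and the Gamma-function bookkeeping you do is correct. What this buys is a clean conceptual reduction --- the survival asymptotics become a Gaussian moment of the renormalized kernel --- and it bypasses Lemma \ref{le:time_formula} entirely.

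What it costs is that essentially all of the technical weight is shifted into the two inputs you defer; calling them ``modest additional work'' is an understatement, as in the paper they occupy Section \ref{se:yaglom} (Lemmas \ref{le:yaglomC}, \ref{le:yaglomN} and the proof of Theorem \ref{th:mainyaglom}). Two specific points you should not wave away. First, the domination $t^{(\kappa+n)/2}p(t,x,a_l+\sqrt{t}\rho\theta)\le C_K\rho^\kappa e^{-\rho^2/4}$ is available by monotonicity of domains only when $x$ also lies in $\O_l$, since $\O$ is \emph{not} contained in the translated cone $a_l+V_l$; for $x$ in another branch or in the core one must first cross $\bd_l$ via \eqref{eq:strongmarkov} and the normal-derivative estimate of Lemma \ref{le:yaglomN}, which is nontrivial. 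Second, your dismissal of a non-maximal branch $\O_j$ is not yet an argument: boundedness of the pointwise limit $g$ on $\O_j$ does not control $\int_{\O_j}p(t,x,y)\,dy$ over the unbounded branch, and the free Gaussian kernel alone gives a bound that still grows like $t^{\kappa/2}$. A workable fix is to apply \eqref{eq:strongmarkov} in the $y$ variable to rewrite $\int_{\O_j}p(t,x,y)\,dy = \tfrac12\int_0^t\int_{\bd_j}\partial_n\PP_\xi(T^j>s)\,p(t-s,\xi,x)\,\sigma(d\xi)\,ds$ and exploit $\kappa_j>\kappa$ together with Harnack on $p(t-s,\xi,x)$ --- but at that point you have reproduced the summation term of the paper's \eqref{eq:time_formula}, so some of the machinery you hoped to avoid reappears.
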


\begin{theorem}
\label{th:mainyaglom}
Let $\O$ be a multicone domain with character $\alpha>0$, and set $\beta = 1+\alpha+n/2$. Fix $x\in\O$, and $1\leq j\leq \k$. For each $y=\abs{y}\theta$, with $\theta\in \d_j$, we have that  $a_j+\sqrt{t}y \in \O_j$, for large enough values of $t$, and
\begin{align}
\label{eq:mainyaglom}
\lim_{t\to \infty} t^{\beta/2} p(t,x,a_j + \sqrt{t}y) &= \ind_{\M}(j) \frac{u_j(x)v_j(y)}{\zed} e^{-\abs{y}^2/2}.
\end{align}
The limit is in the sense of uniform convergence on compact sets on the variables $x$ and $y$.
\end{theorem}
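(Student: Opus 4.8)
The plan is to reduce the statement to the heat kernel of a single pointed cone, and then to transport the resulting asymptotics first to the branch $\O_j$ and finally to all of $\O$ by balayage. Write $\alpha_j$ for the character of $\O_j$ and put $\kappa_j=1+\alpha_j-n/2$, $\beta_j=1+\alpha_j+n/2$, so that $\alpha_j=\alpha$, $\kappa_j=\kappa$, $\beta_j=\beta$ exactly when $j\in\M$; note also that $\kappa_j>0$ because $\d_j$ is a proper subdomain of the sphere. For the full (untruncated) cone $\C_{0,j}:=C(a_j,\d_j,0)$, separation of variables in the coordinates $x=a_j+r\theta$ yields the classical eigenfunction expansion of the Dirichlet heat kernel,
$$
p^{\C_{0,j}}(t,\,a_j+r\theta,\,a_j+\rho\eta)=\frac{e^{-(r^2+\rho^2)/(2t)}}{t\,(r\rho)^{n/2-1}}\sum_{k\ge1} I_{\alpha^k_j}\!\Big(\tfrac{r\rho}{t}\Big)\,m^k_j(\theta)\,m^k_j(\eta),
$$
with $I_\nu$ the modified Bessel function. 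Taking $\rho=\sqrt t\,|y|$, $\eta=\theta$ and $r=|x-a_j|$ fixed, the Bessel argument $r\rho/t=|x-a_j|\,|y|/\sqrt t$ tends to $0$, so $I_{\alpha^k_j}(r\rho/t)\sim(r\rho/2t)^{\alpha^k_j}/\Gamma(\alpha^k_j+1)$; since $\alpha^1_j=\alpha_j<\alpha^2_j\le\cdots$, only the $k=1$ term survives after multiplication by $t^{\beta_j/2}$, the tail being absorbed by the polynomial growth of $\norm{m^k_j}_\infty$ against $1/\Gamma(\alpha^k_j+1)$. Collecting the powers of $t$ and recalling the cone harmonic function \eqref{eq:coneharmonic} for $\O_j$ (so that $v_j(\cdot)=|\cdot|^{\kappa_j}m^1_j(\cdot)$ in the appropriate polar variables), one obtains, uniformly on compacts,
$$
\lim_{t\to\infty}t^{\beta_j/2}\,p^{\C_{0,j}}(t,x,a_j+\sqrt t\,y)=\frac{v_j(x)\,v_j(y)}{2^{\alpha_j}\Gamma(1+\alpha_j)}\,e^{-|y|^2/2}.
$$

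Next I would pass from $\C_{0,j}$ to the branch $\O_j=C(a_j,\d_j,1)\subset\C_{0,j}$. The strong Markov property at the exit time $T^j$ of $\O_j$, whose exit location lies on the base $\bd_j$ because $m^1_j$ vanishes on the lateral boundary, gives for $x\in\O_j$
$$
p^{\C_{0,j}}(t,x,z)=p^{\O_j}(t,x,z)+\EE_x\!\Big[p^{\C_{0,j}}\big(t-T^j,\,B_{T^j},\,z\big)\,;\,T^j<\infty\Big].
$$
Multiplying by $t^{\beta_j/2}$, applying the first step inside the expectation, dominating the rescaled cone kernel from base points by a constant, and using \eqref{eq:truncatedharmonic} to recognise $\EE_x\big[v_j(B_{T^j})\big]=v_j(x)-w_j(x)$, one gets, for $x\in\O_j$,
$$
\lim_{t\to\infty}t^{\beta_j/2}\,p^{\O_j}(t,x,a_j+\sqrt t\,y)=\frac{w_j(x)\,v_j(y)}{2^{\alpha_j}\Gamma(1+\alpha_j)}\,e^{-|y|^2/2}.
$$

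Finally I would pass from $\O_j$ to $\O$; write $z_t:=a_j+\sqrt t\,y$. If $j\notin\M$ then $\beta_j>\beta$: decomposing a path from $x$ to $z_t$ into its successive excursions into $\O_j$ — each contributing at most the corresponding $\C_{0,j}$-kernel, hence at most $c\,t^{-\beta_j/2}$ uniformly in the endpoints, while the expected number of excursions before time $t$ stays bounded — yields $t^{\beta/2}p^\O(t,x,z_t)\to0$, which is the indicator $\ind_\M(j)$ in \eqref{eq:mainyaglom}. Assume now $j\in\M$. The same excursion decomposition exhibits $t^{\beta/2}p^\O(t,x,z_t)$ as an average of rescaled cone kernels $p^{\C_{0,j}}(\cdot,\cdot,z_t)$ against harmonic-measure densities on $\bd_j$, from which local boundedness and, via parabolic regularity, local equicontinuity of the family $\{x\mapsto t^{\beta/2}p^\O(t,x,z_t)\}_t$ follow; the $y$-dependence factors out of the cone kernels by the first step, so along a subsequence $t^{\beta/2}p^\O(\cdot,\cdot,z_t)\to h_j(\cdot)\,\frac{v_j(y)}{\zed}\,e^{-|y|^2/2}$ with $h_j\ge0$. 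Passing to the limit in $p^\O(t,x,z_t)=\EE_x\big[p^\O(t-s,B_s,z_t)\,;\,T>s\big]$ gives $h_j(x)=\EE_x\big[h_j(B_s)\,;\,T>s\big]$ for every $s>0$, so $h_j$ is a nonnegative harmonic function on $\O$ vanishing continuously on $\partial\O$, i.e.\ a nonnegative combination $\sum_l c_l u_l$ of the minimal harmonic functions at infinity. The comparison $p^\O\ge p^{\O_j}$ together with the second step forces $h_j\ge w_j$ on $\O_j$, while the renewal/balayage identities \eqref{eq:harmonic_extension}--\eqref{eq:harmonic_extension_2} and the asymptotics \eqref{eq:limit_1} provide the matching upper bound, so $c_j=1$ and $c_l=0$ for $l\ne j$, i.e.\ $h_j=u_j$. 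The subsequential limit being thus unique, the full limit exists and equals $u_j(x)v_j(y)e^{-|y|^2/2}/(2^\alpha\Gamma(1+\alpha))$ on $\O_j$, hence on all of $\O$ by unique continuation, and the convergence is uniform on compacts by the equicontinuity.

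\textbf{Main obstacle.} The delicate part is this last step: controlling the excursions into $\O_j$ precisely enough to obtain both the vanishing when $j\notin\M$ and a local bound that is sharp up to a bounded harmonic factor when $j\in\M$, and then matching the resulting upper bound with the lower bound $h_j\ge w_j$ so as to identify the subsequential harmonic limit as exactly $u_j$. This is where the full description of the Martin boundary at infinity of $\O$ and the precise normalisation obtained in the cone step are indispensable.
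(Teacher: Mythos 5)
Your first two reductions — the pointed‑cone asymptotic $t^{\beta/2}p^{V}(t,x,\sqrt t\,y)\to v(x)v(y)e^{-|y|^2/2}/\zed$ from the Bessel expansion, and the transfer to the truncated branch by the strong Markov property at $T^j$, which replaces $v_j(x)$ by $w_j(x)=v_j(x)-\EE_x v_j(B_{T^j})$ — follow the same route as the paper (equation \eqref{eq:yaglomV} and Lemma \ref{le:yaglomC}). Be aware, though, that dominating the term $\EE_x[p^{V}(t-T^j,B_{T^j},z_t)]$ is not a single constant bound: the paper splits at $t/2$ and uses the bound $t^{1+\alpha}\partial_n p^{\C}(t,x,\xi)\le Q_x$ on one half and the series bound \eqref{eq:yaglom_bound_pV} on the other.

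The genuine gap is in the final passage from $\O_j$ to $\O$. The paper does \emph{not} run a compactness/uniqueness argument here: it applies \eqref{eq:strongmarkov} once at the exit of $\O_j$ starting from the receding point $z_t=a_j+\sqrt t\,y$, bounds the $[t/2,t]$ half of the boundary integral by $C_x t^{-\kappa/2}$ using the already‑proved Theorem \ref{th:mainkernel}, and evaluates the $[0,t/2]$ half by the normal‑derivative asymptotic $t^{\beta/2}\partial_n p^{j}(t-s,z_t,\xi)\to\ind_{\M}(j)\,\partial_n w_j(\xi)\,v_j(y)e^{-|y|^2/2}/\zed$ (Lemma \ref{le:yaglomN}), which through \eqref{eq:harmonic_extension} yields exactly $(u_j(x)-w_j(x))v_j(y)e^{-|y|^2/2}/\zed$ and combines with the $p^{\O_j}$ term into $u_j(x)$. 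Your compactness scheme instead produces a subsequential limit $h_j=\sum_l c_l u_l$ and a lower bound $h_j\ge w_j$ on $\O_j$, hence $c_j\ge1$; but the claimed ``matching upper bound'' does not follow from \eqref{eq:harmonic_extension}--\eqref{eq:harmonic_extension_2} and \eqref{eq:limit_1}, which describe $u_j$ and $w_j$ themselves and say nothing about the size of the boundary contribution to $h_j$. Without a sharp bound on $t^{\beta/2}\partial_n p^{j}(\cdot,z_t,\cdot)$ (or some substitute such as integrating in $y$ against Theorem \ref{th:mainexit}), you cannot exclude $c_j>1$ or $c_l>0$ for $l\ne j$. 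The excursion decomposition you invoke for the $j\notin\M$ case is also left undefined, and both the uniform $O(t^{-\beta_j/2})$ bound per excursion and the boundedness of the expected number of excursions would need proof; the paper sidesteps all of this because the $\ind_\M(j)$ factor drops out automatically from Lemma \ref{le:yaglomN} via the power mismatch $t^{\beta/2-\beta_j/2}\to0$. So the proposal is incomplete exactly at the step you flag as the main obstacle, and the missing ingredient is the boundary normal‑derivative asymptotic.
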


The paper is organized as follows. Section \ref{se:results} lists some key results that we take from the literature on heat kernels for killed diffusions, in particular, subsection \ref{ss:vertex} includes our main theorems for the case of a cone with vertex. Section \ref{se:truncated} deals with the asymptotics for truncated cones, and Section \ref{se:multicone} includes some lemmas leading up to the proofs of the main theorems, which are contained at the end of Section \ref{se:multicone} for the decay of the heat kernel, and in Section \ref{se:exit} for the decay of the survival probability. Finally, Section \ref{se:yaglom} includes the proof of Theorem \ref{th:mainyaglom} and discusses a renormalized Yaglom limit for the killed Brownian motion.

\section{Preliminary results}
\label{se:results}

In what follows we make the following simplifications, in order to keep the exposition clear. We set $T=T^\O$, $T^j=T^{\O_j}$ and denote by $p$ and $p^j$ the respective heat kernels. In some of the formulas below, integrals over $\bd_j$ are understood to be with respect to the translated measure $\sigma^j$, but we will omit the index since the dependence on $j$ is clear from the domain of integration. Also, we will abuse the notation by omitting the vector $a_j$ form all the formulas involving functions in cones, since its inclusion affects all such functions by a simple translation of coordinates. In particular, we will write $p^j(t,x,y)$ for $x=\abs{x}\theta, y=\abs{y}\eta$ for $\theta,\eta\in \d_j$ instead of $p^j(t,x+a_j,y+a_j)$ in order to simplify our exposition. In this spirit, we will often say that $x\to\infty$ radially in $\O_j$ to mean that $x=a_j+r\theta$, and $r\to\infty$.

We start by listing some general properties of heat kernels in unbounded domains.

\begin{lemma}[Lemma 2.1 in \cite{CMS00}]
\label{le:cms00}
Let $O$ be a regular domain for the Dirichlet problem. Let $u(t,x)$ be a positive solution of the heat equation in $\RR_+\times O$, and consider a function $a:\RR_+\to\RR_+$ such that
\begin{align}
\label{eq:cms00}
\sup_{t\ge t_0,\abs{s}\leq 2} \frac{ a(t+s) }{ a(t) } <\infty,
\end{align}
for some $t_0>0$. Further, assume that the family of functions $\kpar{a(t)u(t,\cdot) : t\ge t_0}$ is bounded on compact sets. Then, the family $\kpar{a(t)u(t,\cdot): t\ge t_0+1}$ is equicontinuous on compact sets of $O$.
\end{lemma}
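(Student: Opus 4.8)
The plan is to derive equicontinuity from an interior parabolic estimate applied to the rescaled functions $v_t(s,x) \df a(t)u(t+s,x)$, exploiting that each $v_t$ solves the heat equation on a fixed time-slab and is, by hypothesis together with \eqref{eq:cms00}, uniformly bounded on compact space-time sets. First I would fix a compact set $K \subset O$ and a slightly larger compact set $K'$ with $K \subset \mathrm{int}(K') \subset K' \subset O$, and choose $\delta>0$ small enough that the parabolic cylinder $(s-\delta^2, s]\times B(x,\delta)$ stays inside $(-2,2)\times \mathrm{int}(K')$ for all $x\in K$ and all $|s|\le 1$. For $t\ge t_0+1$, the function $v_t$ solves the heat equation on $(-2,2)\times \mathrm{int}(K')$, so on each such cylinder I may invoke the standard interior Schauder / De Giorgi--Nash--Moser estimate for the heat equation, which bounds a Hölder norm (in particular the oscillation) of $v_t$ on the smaller cylinder $(s-(\delta/2)^2,s]\times B(x,\delta/2)$ by $C(\delta, n)\,\sup_{(-2,2)\times K'}|v_t|$.

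The key point is that this sup is bounded \emph{uniformly in $t\ge t_0+1$}: writing a point of the slab as $t+s$ with $|s|\le 2$, we have $|v_t(s,x)| = a(t)u(t+s,x) = \bigl(a(t)/a(t+s)\bigr)\,\bigl(a(t+s)u(t+s,x)\bigr)$, and the first factor is bounded above by the assumption \eqref{eq:cms00} (note $t\ge t_0+1$ forces $t+s\ge t_0-1$; one shrinks to $|s|\le 2$ so that in fact $t+s\ge t_0$, or equivalently takes $t\ge t_0+2$ and absorbs the off-by-one into $t_0$), while the second factor is bounded on $K'$ uniformly in its time argument by the hypothesis that $\{a(r)u(r,\cdot): r\ge t_0\}$ is bounded on compacts. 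Hence there is a constant $M = M(K',t_0)$ with $\sup_{t\ge t_0+1}\sup_{(-2,2)\times K'}|v_t| \le M$, and therefore a uniform Hölder bound $[v_t]_{C^{\gamma}((-\delta^2/4,\delta^2/4)\times B(x_0,\delta/2))}\le C(\delta,n)\,M$ for every $x_0\in K$.

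Finally I would translate the uniform Hölder bound on the $v_t$ back to the original functions. Fixing $s=0$, for any $x,y\in K$ with $|x-y|$ small we cover the segment (or simply apply the estimate on a cylinder containing both points, chaining finitely many cylinders along a path in the connected set $\mathrm{int}(K')$ if necessary) to get $|a(t)u(t,x) - a(t)u(t,y)| = |v_t(0,x)-v_t(0,y)| \le C\,M\,|x-y|^{\gamma}$, with $C,M,\gamma$ independent of $t\ge t_0+1$. This is precisely equicontinuity of $\{a(t)u(t,\cdot): t\ge t_0+1\}$ on $K$, and since $K$ was arbitrary, on every compact subset of $O$. The main obstacle is bookkeeping rather than conceptual: one must be careful that the time-window $|s|\le 2$ in \eqref{eq:cms00} is genuinely large enough to host the backward-in-time parabolic cylinders needed for the interior estimate (this is why the conclusion loses one unit of time, giving $t\ge t_0+1$), and one must make sure the chosen cylinders remain in $O$ uniformly over the compact set $K$, which is where the intermediate compact set $K'$ and the positivity/openness of $O$ enter.
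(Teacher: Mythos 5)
The paper does not prove this lemma at all—it is imported verbatim as Lemma 2.1 of \cite{CMS00}—so there is nothing internal to compare against; your argument is a correct, self-contained proof by the standard route (uniform local sup bounds for $v_t(s,x)=a(t)u(t+s,x)$ via \eqref{eq:cms00}, then interior parabolic regularity giving uniform H\"older, hence equicontinuity), which is the same spirit as the cited source. The only bookkeeping point, which you already flag, is resolved most cleanly by using only the backward window $s\in[-1,0]$ (all the interior cylinders you need end at $s=0$): then $t\ge t_0+1$ gives $t+s\ge t_0$, so both the ratio bound (applied at $t'=t+s\ge t_0$, $s'=-s$) and the compact-set bound apply, and this is exactly why the conclusion is stated for $t\ge t_0+1$ rather than $t\ge t_0$; no shift of $t_0$ is needed.
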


The next lemma corresponds to Lemmas 2.1-2.4 in \cite{CMS99}, which are proved for Benedicks domains in $\RR^n$. Nonetheless, the proofs work in a much more general setting, as long as the domain $O$ is  a regular domain for the Dirichlet problem, with infinite interior radius.

\begin{lemma}[Lemmas 2.1-2.4 in \cite{CMS99}]
\label{le:cms99}
In the same setting of Lemma \ref{le:cms00}, for $x,y\in O$ and $s\in\RR$ we have
\begin{align}
\label{eq:cms99}
\lim_{t\to\infty} \frac{ p(t+s,x,y) }{ p(t,x,y) } =1.
\end{align}
The limit is uniform in compact sets of $\overline O$. Also, the map $t\mapsto p(t,x,x)$ is decreasing.
\end{lemma}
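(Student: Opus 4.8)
The plan is to follow the argument of \cite{CMS99}, checking that it only uses regularity for the Dirichlet problem and infinite interior radius, rather than any special feature of Benedicks domains. First I would establish the monotonicity statement, which is the cleanest piece: by the semigroup (Chapman--Kolmogorov) property and symmetry of the Dirichlet heat kernel, $p(t+s,x,x)=\int_O p(t/2+s,x,z)p(t/2,z,x)\,\ud z$ compared with $p(t,x,x)=\int_O p(t/2,x,z)^2\,\ud z$; more directly, $p(2t,x,x)=\int_O p(t,x,z)^2\,\ud z=\norm{p(t,x,\cdot)}_{L^2(O)}^2$, and writing $p(t,x,\cdot)=P_{t-u}\,p(u,x,\cdot)$ with $P$ the sub-Markovian Dirichlet semigroup, contractivity on $L^2$ gives $\norm{p(t,x,\cdot)}_{L^2}$ decreasing in $t$, hence $t\mapsto p(2t,x,x)$ is decreasing; a standard interpolation/semigroup argument upgrades this to all $t\mapsto p(t,x,x)$ decreasing.

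Next, the ratio limit \eqref{eq:cms99}. The key input is the parabolic Harnack principle together with the equicontinuity furnished by Lemma \ref{le:cms00}. Fix $x,y\in O$ and $s\in\RR$. Using the Harnack inequality for the positive caloric function $(t,z)\mapsto p(t,x,z)$ on a space-time cylinder around $(t,y)$, one gets constants $c_1,c_2>0$ (depending on a neighborhood of $y$, but not on $t$ for $t$ large) with
\begin{align}
\label{eq:harnackbound}
c_1\, p(t-2,x,y)\le p(t+s,x,y)\le c_2\, p(t+2,x,y)
\end{align}
for $\abs{s}\le 2$; iterating reduces matters to integer time shifts and shows the ratio $p(t+s,x,y)/p(t,x,y)$ stays in a fixed compact subinterval of $(0,\infty)$ for large $t$. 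Then I would apply Lemma \ref{le:cms00} with $u(t,z)=p(t,x,z)$ and $a(t)=1/p(t,x,y_0)$ for a fixed reference point $y_0$: condition \eqref{eq:cms00} holds because of \eqref{eq:harnackbound}, and the normalized family is bounded on compacts again by Harnack, so $\kpar{a(t)p(t,x,\cdot)}$ is equicontinuous on compacts. Any subsequential limit (as $t\to\infty$) of $a(t)p(t,x,\cdot)$ is a nonnegative caloric-in-limit function; by a diagonal/compactness argument along $t\to\infty$ one extracts convergence and identifies the limit profile, and the normalization forces $p(t+s,x,y)/p(t,x,y)\to 1$. The uniformity on compacts of $\overline O$ comes from the equicontinuity up to the boundary (continuity of $p$ vanishing on $\partial O$, plus boundary Harnack / regularity of $O$).

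The main obstacle is verifying that the parabolic Harnack machinery and the boundary regularity arguments of \cite{CMS99} genuinely go through under just ``regular for the Dirichlet problem with infinite interior radius.'' Concretely, one must ensure: (i) interior parabolic Harnack, which is classical for $\tfrac12\Delta$ and needs no geometric hypothesis; (ii) that the infinite interior radius lets one always find, for each compact $K\subset O$ and each large $t$, a space-time Harnack chain connecting $(t-2,K)$ to $(t+2,K)$ within $O$ with a length bound independent of $t$ --- this is where the ``infinite interior radius'' is used, replacing the role played by the specific structure in the Benedicks setting; and (iii) the boundary behavior, handled by regularity of $O$ for the Dirichlet problem, which guarantees $p(t,x,\cdot)$ extends continuously by $0$ to $\partial O$ and that the equicontinuity in Lemma \ref{le:cms00} is genuinely up to $\overline O$. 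Once these three points are in place, the proof is a transcription of \cite{CMS99}, so I would state them as the substantive checks and otherwise cite the original argument.
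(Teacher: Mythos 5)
The paper does not prove this lemma; it cites Lemmas 2.1--2.4 of \cite{CMS99} and asserts the Benedicks-domain arguments carry over. Your reconstruction gets the monotonicity of $t\mapsto p(t,x,x)$ right (the $L^2$-contractivity argument; the closing ``interpolation'' remark is superfluous since $p(2t,x,x)$ decreasing in $t$ is the same statement as $p(t,x,x)$ decreasing). However, the ratio-limit argument has a genuine gap. With $a(t)=1/p(t,x,y_0)$, the claim that ``the normalization forces $p(t+s,x,y)/p(t,x,y)\to 1$'' is circular, because $a(t)/a(t+s)=p(t+s,x,y_0)/p(t,x,y_0)$ is precisely the quantity whose limit you must determine. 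Passing to the limit in Chapman--Kolmogorov along a subsequence gives $p(t+s,x,y)/p(t,x,y)\to\EE_y\spar{h(X_s);T>s}/h(y)$ where $h$ is the (harmonic, nonnegative) subsequential limit of $a(t)p(t,x,\cdot)$, and the supermartingale property only gives $\leq 1$ for $s>0$. Nothing in your argument pins down equality; indeed, for a bounded domain the ratio tends to $e^{-\lambda_1 s}<1$, so without using the infinite-inradius hypothesis in the right place the conclusion fails.

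This also surfaces a misattribution: you invoke ``infinite interior radius'' to justify Harnack chains, but those chains live in a fixed compact space-time cylinder and exist in any domain. The hypothesis is needed to rule out exponential decay. The route taken in \cite{CMS99} is essentially: (i) $t\mapsto p(t,x,x)$ is log-convex (Cauchy--Schwarz on Chapman--Kolmogorov); (ii) a convex, decreasing function satisfies $\log p(t+s,x,x)-\log p(t,x,x)\to\lambda s$ with $\lambda\le 0$; (iii) since $O$ contains balls of arbitrarily large radius, a Harnack chain plus the heat kernel of such a ball gives $p(t,x,x)\ge c_R\,e^{-\mu_R t}$ with $\mu_R\downarrow 0$, forcing $\lambda=0$; (iv) Harnack then transfers the diagonal limit to off-diagonal pairs and gives uniformity on compacts of $\overline O$. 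Steps (i)--(iii), or some substitute mechanism for fixing $\lambda=0$, are what your proposal is missing.
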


\begin{lemma}
\label{le:limitharmonic}
In the same setting as in Lemma \ref{le:cms99}, further assume that for all $s\in\RR$,
\begin{align}
\label{eq:assumtion_a}
\lim_{t\to\infty} \frac{ a(t+s) }{a(t)} =1.
\end{align}
If $a(t)p(t,x,y)\leq C_x^{1+\abs{y}}$ for large enough $t$, then any limit point of $a(t)p(t,\cdot,\cdot)$ (in the topology of uniform convergence on compact sets) has the following properties:
\begin{enumerate}[(i)]
\item is a symmetric, non-negative function; 
\item is harmonic in each component; 
\item and vanishes continuously on $\partial O$.
\end{enumerate}
\end{lemma}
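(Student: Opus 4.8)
The plan is to produce a limit point first, and then to verify (i), (ii), (iii) in turn. \emph{Existence:} for every compact $K\times L\subset O\times O$ the decay bound $a(t)p(t,x,y)\le C_x^{1+|y|}$ (together with joint continuity of $p$ for moderate $t$) shows $\{a(t)p(t,\cdot,\cdot):t\ge t_0+1\}$ is uniformly bounded on $K\times L$; Lemma~\ref{le:cms00}, applied to $u(t,x)=p(t,x,y)$, gives equicontinuity in $x$ uniformly in $y\in L$, and combining this with the symmetry $p(t,x,y)=p(t,y,x)$ via a triangle inequality yields joint equicontinuity on $K\times L$. Arzel\`a--Ascoli then furnishes a sequence $t_n\to\infty$ with $a(t_n)p(t_n,\cdot,\cdot)\to h$ locally uniformly on $O\times O$. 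Property (i) is immediate: each $a(t_n)p(t_n,\cdot,\cdot)$ is non-negative and symmetric, and both survive the limit.

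For (ii), fix $y\in O$ and set $u_n(s,x)=a(t_n)\,p(t_n+s,x,y)$; since $a(t_n)$ is a constant, $u_n$ solves the heat equation $\partial_s u_n=\tfrac12\Delta_x u_n$ on $(-t_n/2,\infty)\times O$. Writing $u_n(s,x)=\bigl(a(t_n)p(t_n,x,y)\bigr)\,\frac{p(t_n+s,x,y)}{p(t_n,x,y)}$ and using Lemma~\ref{le:cms99} (the ratio tends to $1$ uniformly on compacts of $\overline O$, and locally uniformly in $s$) together with $a(t_n)p(t_n,\cdot,y)\to h(\cdot,y)$, we obtain $u_n\to h(\cdot,y)$ locally uniformly on $\RR\times O$, with the limit independent of $s$. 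Interior parabolic estimates imply that a locally uniform limit of caloric functions is caloric; hence $(s,x)\mapsto h(x,y)$ is caloric, and $s$-independence forces $\Delta_x h(\cdot,y)=0$. Harmonicity in the second variable then follows from symmetry.

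The heart of the matter is (iii), and it is here that the hypothesis $\lim_{t}a(t+s)/a(t)=1$ is needed. Fix $\xi_0\in\partial O$ and $y$ with $|y-\xi_0|\ge 2\rho$, and put $B=B(\xi_0,\rho)$, $D=O\cap B$, $\tau=T^D$. Applying the strong Markov property at $\tau\le T^O$, and using that $p(\cdot,z,y)$ vanishes for $z\in\partial O$ and that $B_t\notin\{y\}$ on $\{\tau\ge t\}$, one gets for $x\in D$
\[
p(t,x,y)=\EE_x\bigl[p(t-\tau,B_\tau,y);\ \tau<t,\ B_\tau\in O\cap\partial B\bigr].
\]
Multiply by $a(t_n)$ and split the expectation over $\{\tau\le t_n-R_0\}$ and $\{\tau>t_n-R_0\}$ for a fixed, large $R_0$. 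On the first event $t_n-\tau\ge R_0$, so the decay bound gives $a(t_n-\tau)p(t_n-\tau,B_\tau,y)\le C_\rho$ (uniformly over the relevant bounded range of $B_\tau$), while $\lim_t a(t+s)/a(t)=1$ yields $a(t_n)/a(t_n-\tau)\le e^{\e\tau}$ for $t_n$ large; this piece is therefore at most $C_\rho\,\EE_x[e^{\e\tau};B_\tau\in\partial B]=:C_\rho\,\phi_\e(x)$, where $\e$ is fixed strictly below the principal Dirichlet eigenvalue $\lambda_D$ of the bounded domain $D$ so that $\phi_\e<\infty$. On the second event the Brownian motion survives in $D$ for time $\ge t_n-R_0$, an event of probability $\le C\rho_1^{\,t_n}$ with $\rho_1:=\sup_{z\in D}\PP_z(\tau>1)<1$; since $p(t_n-\tau,B_\tau,y)\le K$ (trivial Gaussian bound, valid as $|B_\tau-y|\ge\rho$) and $\lim_t a(t+s)/a(t)=1$ forces $a(t)=e^{o(t)}$, this piece is $\le K'a(t_n)\rho_1^{\,t_n}\to 0$. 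Letting $n\to\infty$ gives $h(x,y)\le C_\rho\,\phi_\e(x)$ on $D$. Finally, $\phi_\e$ solves $\tfrac12\Delta\phi_\e+\e\phi_\e=0$ in $D$ with boundary data $\ind_{O\cap\partial B}$, and since $\xi_0$ is a regular boundary point of $O$ (hence of $D$) lying on the $\partial O$-part of $\partial D$, $\phi_\e$ is continuous at $\xi_0$ with $\phi_\e(\xi_0)=0$. Thus $h(x,y)\to 0$ as $x\to\xi_0$; symmetry gives the same in the other variable, which is (iii).

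I expect the main obstacle to be exactly this last step: because the control on $a$ is only asymptotic, one must check that Brownian paths lingering a long time in the small domain $D$ before reaching $y$ do not spoil the estimate, and this is precisely what the competition between the exponential decay $e^{-\lambda_D t}$ of survival in the bounded domain $D$ and the subexponential growth of $a$ settles. Everything else reduces to parabolic regularity together with Lemmas~\ref{le:cms00}--\ref{le:cms99}.
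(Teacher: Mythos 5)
Your proof is correct, but it departs from the paper's in both (ii) and (iii). The paper proves everything off a single integral identity: applying Chapman--Kolmogorov and dominated convergence (this is exactly where $a(t_k+s)/a(t_k)\to 1$ enters, together with the domination $a(t_k)p(t_k,x,z)\le C_x^{1+|z|}$ and the Gaussian kernel bound) yields $h(x,y)=\int_O h(x,z)\,p(s,z,y)\,dz=\EE_y[h(x,B_s)]$; harmonicity then follows by the optional sampling theorem, and boundary vanishing by sending $y_n\to y_0\in\partial O$ in this same identity and applying dominated convergence once more. Your (ii) instead views $s\mapsto a(t_n)p(t_n+s,\cdot,y)$ as a family of caloric functions, identifies its $s$-independent locally uniform limit as $h(\cdot,y)$ via Lemma~\ref{le:cms99}, and invokes interior parabolic regularity to conclude harmonicity. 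Your (iii) avoids the integral identity entirely, trading it for a strong-Markov decomposition at the exit time $\tau$ from $D=O\cap B(\xi_0,\rho)$ and the Feynman--Kac barrier $\phi_\e(x)=\EE_x\bigl[e^{\e\tau};\,B_\tau\in O\cap\partial B\bigr]$ with $\e<\lambda_D$; the hypothesis on $a$ enters through the subexponential estimates $a(t_n)/a(t_n-\tau)\le e^{\e\tau}$ and $a(t)=e^{o(t)}$. Both routes are sound. The paper's is considerably shorter and leans on the global integrability supplied by the growth bound on $p$; yours is more local in spirit (only the decay bound near the fixed boundary point is exploited) and makes explicit the competition between the subexponential growth of $a$ and the exponential decay of survival in the bounded window $D$. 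One small point worth spelling out in your write-up: you need $x\mapsto C_x$ locally bounded so that $\sup_{z\in O\cap\partial B}C_z^{1+|y|}<\infty$; this is also implicit in the paper's dominated-convergence step and holds in all its applications (where $C_x$ comes from Harnack's inequality), but it should be stated.
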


\begin{proof}
For the sake of simplicity we denote $h_t(x,y)=a(t)p(t,x,y)$. Let $t_k\to\infty$ be a sequence such that $h_{t_k}$ converges uniformly on compact sets of $O$ to a function $h$. It is clear that $h$ is symmetric and non-negative. Notice that for any $s\in\RR$, the sequence $h_{t_k+s}$ also converges uniformly on compact sets of $O$. This is direct from Lemma \ref{le:cms99} and the hypothesis.

By the Chapman-Kolmogorov equation, for any $s\in\RR$ and large enough $k\in\NN$,
\begin{align*}
h_{t_k+s}(x,y) &= \frac{a(t_k+s)}{a(t_k)} \int_\O h_{t_k}(x,z) p(s,z,y) dz.
\end{align*}
By assumption, $h_{t_k}(x,z)\leq C_x^{1+\abs{z}}$, which is $p(s,z,y)dz$-integrable as it can be checked by comparing $p$ with the free Brownian motion's kernel. Thus, we can apply the  \dct\ to obtain
\begin{align*}
h(x,y) &= \int_\O h(x,z) p(s,z,y) dz = \EE_y(h(x,X_s)).
\end{align*}
It is standard to show that $h(x,X_s)$ is a martingale, from where its standard to deduce that $y\mapsto h(x,y)$ is harmonic by means of the optional sampling theorem. 

Consider a sequence $y_n\in O$, with $y_n\to y\in\partial O$. By using once again the Gaussian upper bound on $p$, and applying the \dct\ to $h(x,z)p(1,z,y_n)$, it is deduced  that $h(x,\cdot)$ vanishes continuously on $\partial O$.

%
\end{proof}

\begin{lemma}
\label{le:strongmarkov}
Let $U$ and $O$ be domains in $\RR^n$ that are regular for the Dirichlet problem. For $\xi\in\partial U$, $x\in U$
\begin{align}
\label{eq:partialdensity}
\PP_x(B_{T^U}\in \sigma(d\xi),T^U\in ds) = \frac12 \partial_np^U(s,x,\xi)\sigma(d\xi)ds.
\end{align}
Here, $\partial_n$ represents the inward normal derivative at $\xi\in\partial U$.

Also, if $U\sub O$, then
\begin{align}
\label{eq:strongmarkov}
p^O(t,z,y) &= p^{U}(t,z,y) + \int_0^t\int_{\partial U} \frac12 \partial_np^U(s,x,\xi)p^O(t-s,\xi, y) \sigma(d\xi)ds.
\end{align}
\end{lemma}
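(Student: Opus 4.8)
The plan is to derive both formulas from the strong Markov property of Brownian motion at the exit time $T^U$, so the only real content is the identity \eqref{eq:partialdensity}, from which \eqref{eq:strongmarkov} follows by a routine decomposition of paths. First I would establish \eqref{eq:partialdensity}. Fix $x\in U$ and a smooth test function $\phi$ on $\RR_+\times\partial U$. The quantity $\EE_x\big(\phi(T^U,B_{T^U})\big)$ can be computed by a Green's-identity / Duhamel argument: let $u(s,z)=\EE_z\big(\phi(s+\cdot\,,\cdot)\text{ evaluated at exit}\big)$ solve the heat equation in $U$ with the appropriate boundary data, and integrate $\partial_s u - \tfrac12\Delta u = 0$ against $p^U(s,x,z)$ over $(0,t)\times U$. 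Integration by parts in $z$ picks up exactly the boundary term $\tfrac12\int_{\partial U}\partial_n p^U(s,x,\xi)\,(\text{boundary value of }u)\,\sigma(d\xi)$, because $p^U$ itself vanishes on $\partial U$; letting $t\to\infty$ (using $T^U<\infty$ a.s., which holds since $U$ is a proper regular domain, or working on finite horizons and summing) identifies the joint law of $(T^U,B_{T^U})$ with the claimed density. The sign and the factor $\tfrac12$ come from the normalization $\partial_t=\tfrac12\Delta$ and the convention that $\partial_n$ is the inward normal, so $\partial_n p^U\ge 0$ on $\partial U$.

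Given \eqref{eq:partialdensity}, I would prove \eqref{eq:strongmarkov} as follows. For $z\in U\subseteq O$ and a Borel set $A$, decompose the event $\{B_t\in A,\ T^O>t\}$ according to whether the path has exited $U$ by time $t$:
\begin{align*}
\PP_z(B_t\in A,\ T^O>t) &= \PP_z(B_t\in A,\ T^U>t) \\
&\quad + \PP_z(B_t\in A,\ T^O>t,\ T^U\le t).
\end{align*}
The first term has density $p^U(t,z,y)$ in $y$ since on $\{T^U>t\}$ one has $T^O>t$ automatically (as $U\subseteq O$). For the second term, apply the strong Markov property at $T^U$: on $\{T^U=s\le t,\ B_{T^U}=\xi\}$ one has $T^O>t$ iff the post-$T^U$ path, started at $\xi\in\partial U\subseteq\overline O$, satisfies $T^O>t-s$ and lands in $A$, which has density $p^O(t-s,\xi,y)$; integrating against the law \eqref{eq:partialdensity} of $(T^U,B_{T^U})$ gives the integral term. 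Differentiating in $y$ (or rather identifying Radon–Nikodym derivatives) yields \eqref{eq:strongmarkov}. One should note that $p^O(t-s,\xi,y)$ for $\xi\in\partial U$ makes sense by continuous extension (it is $0$ if $\xi\in\partial O$), and the $s\to t$ integrand is controlled by the Gaussian upper bound so Fubini applies.

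The main obstacle is the rigorous justification of \eqref{eq:partialdensity}: one must know that the exit distribution has a density with respect to $\sigma(d\xi)\,ds$ at all — this requires the regularity of $U$ for the Dirichlet problem and some smoothness of $\partial U$ near the relevant pieces so that $\partial_n p^U$ exists — and one must justify the boundary integration by parts, i.e., that $p^U(s,x,\cdot)$ is smooth up to $\partial U$ away from the singularity and that the Green's-identity argument has no hidden boundary-at-infinity contribution when $U$ is unbounded (handled by first working on $U\cap B(0,N)$ and letting $N\to\infty$, using the Gaussian bound to kill the large-sphere term). Once the density statement is in hand, the strong Markov decomposition is standard and I would not belabor it.
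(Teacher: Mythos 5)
Your proposal is correct and follows essentially the same route as the paper, which likewise obtains \eqref{eq:partialdensity} from Green's theorem applied to the heat equation and then derives \eqref{eq:strongmarkov} as an elementary application of the strong Markov property at $T^U$ (the paper only sketches these steps, regarding them as well known). Your additional care about the localization $U\cap B(0,N)$ and the Gaussian bounds is a reasonable way to make the sketch rigorous, but it does not change the method.
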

\begin{proof} These results are well known so we only are going to comment their proofs. The proof of \eqref{eq:partialdensity} uses Green's theorem and the heat equation, and it is very straightforward carry out. Equation \eqref{eq:strongmarkov} follows  as an elementary application of the strong Markov property at time $T^U$.
\end{proof}

The following lemma characterizes all positive, harmonic functions vanishing on $\partial\O$. In other words, we characterise the Martin boundary of $\O$. We use the notation from the Introduction.

\begin{lemma}
\label{le:extensions}
Let $u_1,\ldots,u_\k$ be the minimal harmonic functions given by \eqref{eq:harmonic_extension}. For every nonnegative harmonic function $u$ in $\O$, vanishing continuously on $\partial\O$, there are unique nonnegative coefficients $\gamma_1,\cdots, \gamma_\k$ such that
\begin{align}
\label{eq:uniqueharmonic}
u(x) = \sum_{j=1}^\k \gamma_j u_j(x),\qquad x\in\O.
\end{align}
\end{lemma}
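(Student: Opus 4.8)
The plan is to reduce the classification of positive harmonic functions vanishing on $\partial\O$ to the classical fact (stated in the Introduction, via Ancona's theorem) that each cone-with-vertex $\C_j = C(a_j,\d_j,0)$ carries a \emph{unique} positive harmonic function vanishing on its boundary, up to scalar. So fix a nonnegative harmonic $u$ on $\O$ vanishing continuously on $\partial\O$. First I would restrict $u$ to a single branch $\O_j = C(a_j,\d_j,1)$ and study its growth at infinity along the cone. The key step is to show that on $\O_j$, after subtracting the harmonic measure contribution of $u$ on the base $\bd_j$ (i.e. replacing $u$ by $u(x)-\EE_x u(B_{T^j})$, which is nonnegative since $u\ge 0$ and harmonic in $\O_j$ vanishing on $\partial\O_j \cap \partial\O$ and on $\bd_j$), we obtain a nonnegative harmonic function on the \emph{truncated} cone $\O_j$ vanishing on all of $\partial\O_j$. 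By the uniqueness statement \eqref{eq:truncatedharmonic}, this must be a nonnegative multiple $\gamma_j w_j$ of the minimal harmonic function $w_j$ of the truncated cone (uniqueness for the truncated cone follows from uniqueness for the cone with vertex by the same balayage/extension argument, or one cites it directly). This defines the candidate coefficients $\gamma_j \ge 0$, one per branch.

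Next I would form the difference $\tilde u = u - \sum_{j=1}^{\k} \gamma_j u_j$. By \eqref{eq:harmonic_extension_2}, on each branch $\O_j$ we have $w_j(x) = u_j(x) - \EE_x u_j(B_{T^j})$, and by construction $u(x) - \EE_x u(B_{T^j}) = \gamma_j w_j(x)$ on $\O_j$; combining these, $\tilde u(x) = \EE_x\tilde u(B_{T^j})$ on each $\O_j$, i.e. $\tilde u$ on each branch equals the harmonic extension of its own boundary values on $\bd_j$, hence is \emph{bounded} on $\O_j$ by the maximum of $|\tilde u|$ over the compact base $\bd_j$ (using that $u_i$ is bounded on $\O\setminus\O_i$ for $i\ne j$, from \eqref{eq:limit_1}, and $u_j - \gamma_j u_j \cdot$(the part on $\O_j$) is controlled). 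The upshot is that $\tilde u$ is a harmonic function on $\O$, vanishing continuously on $\partial\O$, which is bounded on all of $\O$ (bounded on the core $\O_0$ by continuity/compactness, bounded on each branch by the above). A bounded harmonic function on $\O$ vanishing on $\partial\O$ must be identically $0$: this is a Phragmén–Lindelöf / maximum-principle argument — the minimal harmonic functions $u_j$ have strictly positive growth at infinity (exponent $\kappa = 1+\alpha - n/2$, or more precisely $w_j$ grows like $r^{\kappa_j}$ with $\kappa_j>0$ since $\alpha_j > n/2-1$ for a proper subdomain... one must check the sign, but regardless $w_j\to\infty$ radially or is at least unbounded while a bounded solution cannot dominate it), so if $\tilde u\not\equiv 0$ then $|\tilde u| \le \e u_j$ fails for small $\e$ only in a compact set, and the maximum principle on $\O$ minus that compact set forces $\tilde u \equiv 0$.

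For uniqueness of the coefficients: if $\sum_j \gamma_j u_j = \sum_j \gamma_j' u_j$, evaluate the asymptotic \eqref{eq:limit_1} radially in branch $\O_l$. Along $x = a_l + r\theta$ with $\theta\in\d_l$ fixed, $u_l(x)/w_l(x)\to 1$ while $u_i(x)$ stays bounded for $i\ne l$ (again by boundedness of $u_i$ on $\O\setminus\O_i$); dividing by $w_l(a_l+r\theta) = r^{\kappa_l}m^1_l(\theta)(1+o(1))$ and letting $r\to\infty$ isolates $\gamma_l = \gamma_l'$. Since $l$ was arbitrary, all coefficients agree, and nonnegativity of $\gamma_l$ follows from the same limit since $u\ge 0$ and $w_l > 0$.

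The main obstacle I expect is the boundedness-implies-zero step: making the Phragmén–Lindelöf argument rigorous on a multicone domain requires a decent barrier/comparison function that grows at infinity in \emph{every} branch, and one must confirm the growth exponent of each $w_j$ is genuinely positive so that a bounded function cannot compete with it in the maximum principle applied on $\O \setminus K$ for $K$ a large ball. A secondary technical point is justifying that $\tilde u$ is bounded on each branch uniformly — controlling $\EE_x u_i(B_{T^j})$ for $i\ne j$ and for $i=j$ — but this follows cleanly from \eqref{eq:limit_1} and \eqref{eq:harmonic_extension_2} once set up carefully.
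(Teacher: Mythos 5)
Your proposal follows the same skeleton as the paper's proof: restrict $u$ to each branch, peel off the harmonic-measure contribution $\EE_x u(B_{T^j})$ to land on a nonnegative harmonic function in the truncated cone vanishing on all of $\partial\O_j$, invoke uniqueness to get $\gamma_j w_j$, form $\tilde u = u - \sum_j \gamma_j u_j$, show it is bounded, and conclude $\tilde u\equiv 0$; uniqueness then comes from the radial asymptotics of $u_l$ in $\O_l$. Two points, however, deserve attention.

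First, you state that $u(x) - \EE_x u(B_{T^j})$ is nonnegative "since $u\ge 0$", but this is not automatic — it is the assertion that $u(B_{t\wedge T^j})$ is a supermartingale rather than merely a local martingale, which requires an argument. The paper handles this by stopping at $T^j_m$ (exit from $\O_j\cap B(a_j,m)$, where It\^o's formula yields a genuine bounded martingale), using $u\ge 0$ to drop the term on $\{T^j_m<T^j\}$, and passing to the limit by monotone convergence. Your parenthetical is too terse here.

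Second, and more importantly: your final step (bounded $+$ harmonic $+$ vanishing on $\partial\O$ $\Rightarrow$ zero) is sketched as a Phragm\'en--Lindel\"of comparison with $\e\sum_j u_j$, and you correctly flag it as the main obstacle. As sketched it has a genuine gap: on $\O\cap\partial B(0,R)$ both $\tilde u$ and $\sum_j u_j$ vanish near the lateral boundary, so $|\tilde u|\le\e\sum_j u_j$ does \emph{not} hold on that sphere for free, and the barrier comparison doesn't close without further estimates on $\PP_x(B_{T^j}\in\bd_j)$. The paper avoids the barrier entirely: since $\tilde u$ is bounded, $\tilde u(B_{t\wedge T})$ is a bounded martingale; $T<\infty$ a.s.\ (from the cone exit-time asymptotics), so $B_{t\wedge T}\to B_T\in\partial\O$ and $\tilde u(B_{t\wedge T})\to 0$; bounded convergence then gives $\tilde u(z)=\lim_t \EE_z[\tilde u(B_{t\wedge T})]=0$. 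This is shorter and sidesteps the very difficulty you identified, so I'd recommend switching to that argument rather than trying to rescue the barrier.
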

\begin{proof}
For $x\in\O_j$, consider the harmonic function $\tilde{w}_j (x) = u(x)-\EE_x(u(B_{T^j}))$. It is standard to check that $\tilde{w}_j$ is harmonic in $\O_j$, and that vanishes continuously on $\partial\O_j$. For $m>R$, let $T^j_m$ be the exit time from the set $\O_j\cap B(a_j,m)$. By It\^o's formula, the process $u(B_{t\wedge T^j_m})$ is a bounded martingale under $\PP_x$, for $x\in\O_j$. Therefore,
\begin{align*}
u(x) &= \EE_x\rpar{u(B_{T^j_m})} = \EE_x\rpar{u(B_{T^j_m})\ind_{\kpar{T^j_m < T^j}}}  +   \EE_x\rpar{u(B_{T^j})\ind_{\kpar{T^j_m=T^j}})} \\
&\geq \EE_x\rpar{u(B_{T^j})} - \EE_x\rpar{u(B_{T^j})\ind_{\kpar{T^j_m<T^j}})}.
\end{align*}
Since $T^j_m\nearrow T^j$, monotone convergence shows that $u(x)\geq \EE_x\rpar{u(B_{T^j})}$, that is, $\tilde w_j$ is nonnegative. Thus, $\tilde w_j(x) = \gamma_j w_j(x)$ by uniqueness. For $z\in\O$, set
\begin{align*}
\tilde u(z) &= \sum_{j=1}^\k \gamma_j u_j(z) - u(z), 
\end{align*}
which is harmonic in $\O$, and vanishes continuously on $\partial O$. We will next show that $\tilde u$ is bounded, for which it is enough to show that it is bounded in each branch of $\O$.

Fix $i\in\kpar{1,\ldots,\k}$, and consider $x\in \O_i$. We have
\begin{align*}
\tilde u(x) &= 
-\EE_x\rpar{ u(B_{T^i}) } +\gamma_i\EE_x\rpar{ u_i(B_{T^i}) } + \sum_{j=1,j\neq i}^\k \gamma_j u_j(x) \\
\end{align*}
The first term on the right hand side is bounded by $\sup_{x\in\Gamma_i}\abs{u(x)}$, and the second one by $\gamma_i \sup_{x\in\Gamma_i}\abs{u_i(x)}$. The summation is bounded as each term $u_j(x)$ is bounded in $\O_i$. We conclude that $\tilde u$ is harmonic and bounded in $\O$, and vanishes continuously on $\partial\O$. It follows that $\tilde u\rpar{B_{t\wedge T}}$ is a martingale, and so
$$
\tilde u(z) = \EE_z\rpar{ \tilde u\rpar{B_{t\wedge T}} }\to 0, \text{ as } t\to\infty.
$$

Uniqueness follows from the boundedness of $u_j$ in $\O\setminus \O_j$, and its unboundedness in $\O_j$.
\end{proof}

\subsection{Asymptotics in a cone with vertex}
\label{ss:vertex}

In what follows we consider a cone $V$, with opening $\d$ and vertex $a=0$, that is, $V=C(0,\d,0)$. Let $p^V$ be the heat kernel in $V$. 
Let $0<\lambda^1<\lambda^2\leq \lambda^3\leq\cdots$ be the eigenvalues of the Laplace-Beltrami operator on $\d$, with  corresponding orthonormal basis $\{m^1,m^2,m^3,\ldots\}$ of $L^2(\d,\sigma)$. We also denote by  $\alpha^i = \rpar{\lambda^i+(\frac{n}2-1)^2}^{1/2}$.

The behaviour of the heat kernel with Dirichlet boundary conditions is well known for a cone with vertex.  
The following results are taken from \cite{BaS97}.

\begin{theorem}
\label{th:hkvertex}
For $x=r\theta,y=\rho\omega\in V$, with $\theta,\omega\in \d$ and $r=\abs{x}$, $\rho=\abs{y}$, the heat kernel with Dirichlet boundary conditions in $V$ is given by:
\begin{align}
\label{eq:hkvertex}
p^V(t,x,y) = \frac{\exp\rpar{-\frac{r^2+\rho^2}{2t}}} {t\rpar{r\rho}^{\frac{n}2 -1}} 
\sum_{i=1}^\infty J_{\alpha^i} \rpar{ \frac{r\rho}{t} } m^i(\theta) m^i(\omega)
\end{align}
where $J_\nu$ is the modified Bessel function of first kind of order $\nu$, that is, the solution of 
$$
z^2 J_\nu''(z) + zJ_\nu' - (z^2+\nu^2)J_\nu=0,
$$
satisfying the growing conditions:
\begin{align}
\label{eq:besselbound}
\frac{z^\nu}{2^\nu\Gamma(1+\nu)} \leq J_\nu(z) \leq \frac{z^\nu}{2^\nu\Gamma(1+\nu)} e^z,
\end{align}
for $z>0$, and $\nu\geq 0$.
\end{theorem}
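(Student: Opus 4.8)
\emph{Proof sketch.} The formula is due to Ba\~nuelos and Smits~\cite{BaS97}; here is how I would derive it by separation of variables. Because $\d$ is a proper Lipschitz subdomain of $\SS^{n-1}$, the cone $V=C(0,\d,0)$ is regular for the Dirichlet problem: its lateral boundary is Lipschitz, and at the vertex $\RR^n\setminus V$ contains a genuine cone (since $\SS^{n-1}\setminus\overline{\d}$ contains a spherical cap), so the exterior cone condition holds there as well. Consequently $p^V$ is the \emph{unique} fundamental solution of $\partial_t u=\tfrac12\Delta u$ on $V$ that vanishes continuously on $\partial V$, and it suffices to exhibit a series with those two properties and initial datum $\delta_x$. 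First I would write the Laplacian in polar coordinates, $\Delta=\partial_{rr}+\tfrac{n-1}{r}\partial_r+\tfrac1{r^2}\Delta_{\SS^{n-1}}$, recall that $\{m^i\}$ is a complete orthonormal system in $L^2(\d,\sigma)$ of Dirichlet eigenfunctions of the Laplace--Beltrami operator ($\Delta_{\SS^{n-1}}m^i=-\lambda^i m^i$ on $\d$), and look for a solution in the separated form
\[
p^V(t,r\theta,\rho\omega)=\sum_{i\ge1}g_i(t,r,\rho)\,m^i(\theta)\,m^i(\omega),
\]
the angular part being the natural guess since the angular component of $\delta_{\rho\omega}$ is $\sum_i m^i(\omega)m^i(\cdot)$. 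Substituting, each $g_i$ must satisfy, for $r,\rho>0$,
\[
\partial_t g_i=\tfrac12\Bigl(\partial_{rr}g_i+\tfrac{n-1}{r}\partial_r g_i-\tfrac{\lambda^i}{r^2}g_i\Bigr),
\]
together with the initial condition $g_i(0^+,r,\rho)=r^{1-n}\delta_r(\rho)$, so that $\sum_i g_i\,m^i(\theta)m^i(\omega)$ reproduces the Lebesgue-density $\delta_{r\theta}$.

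Next I would solve this radial problem explicitly. Writing $g_i=(r\rho)^{-(n-1)/2}u_i$ conjugates the $r$-operator above to $\tfrac12\bigl(\partial_{rr}-(\nu_i^2-\tfrac14)r^{-2}\bigr)$ with $\nu_i^2=\lambda^i+(\tfrac n2-1)^2$, i.e.\ $\nu_i=\alpha^i$. This is the classical one-dimensional Bessel heat operator, whose fundamental solution on $(0,\infty)$ (with respect to $d\rho$) is $\tfrac{\sqrt{r\rho}}{t}\exp\!\bigl(-\tfrac{r^2+\rho^2}{2t}\bigr)J_{\alpha^i}\!\bigl(\tfrac{r\rho}{t}\bigr)$, as one checks directly from $z^2J_\nu''+zJ_\nu'-(z^2+\nu^2)J_\nu=0$ (the solution meant here being the one growing like $e^z$, consistent with \eqref{eq:besselbound}). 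Undoing the conjugation and tracking the volume element $\rho^{n-1}d\rho$ gives precisely
\[
g_i(t,r,\rho)=\frac1{t\,(r\rho)^{n/2-1}}\exp\!\Bigl(-\tfrac{r^2+\rho^2}{2t}\Bigr)J_{\alpha^i}\!\Bigl(\tfrac{r\rho}{t}\Bigr),
\]
which is the $i$-th term of \eqref{eq:hkvertex}.

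It remains to verify that the resulting series really is $p^V$. Vanishing on the lateral boundary is immediate, each $m^i$ vanishing on $\partial\d$; at the vertex the lower bound in \eqref{eq:besselbound} gives $J_{\alpha^i}(z)\sim z^{\alpha^i}/(2^{\alpha^i}\Gamma(1+\alpha^i))$, so $g_i=O\bigl(r^{\alpha^i-(n/2-1)}\bigr)=O(r^{\kappa_i})$ with $\kappa_i=1+\alpha^i-n/2\ge 1+\alpha^1-n/2>0$ (using $\lambda^1>0$), whence the series vanishes at $0$ too. For local uniform convergence of the series and of its term-by-term derivatives, the upper bound in \eqref{eq:besselbound} makes the $i$-th term $O\!\bigl(\norm{m^i}_\infty^2\,(r\rho/t)^{\alpha^i}e^{r\rho/t}/(2^{\alpha^i}\Gamma(1+\alpha^i))\bigr)$ on compacts of $(0,\infty)\times V\times V$; the factorial growth of $\Gamma(1+\alpha^i)$ dominates the Weyl growth $\alpha^i\asymp(\lambda^i)^{1/2}$ together with the polynomial elliptic bound $\norm{m^i}_\infty\le C(\lambda^i)^{(n-1)/4}$, so the series converges with room to spare and solves the heat equation. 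The one genuinely delicate point, which I expect to be the main obstacle, is the initial condition: one must show $\int_V p^V(t,x,y)f(y)\,dy\to f(x)$ as $t\to0^+$ for $f\in C_c(V)$. Here I would argue that the radial kernels $g_i(t,r,\cdot)$ concentrate at $\rho=r$ (the product of the Gaussian and Bessel factors behaving like $e^{-(r-\rho)^2/2t}$ there, the regime $r\rho/t\to\infty$ being exactly where the $e^z$ in \eqref{eq:besselbound} is needed) and then use completeness of $\{m^i\}$ in $L^2(\d,\sigma)$ to recover the angular delta; uniqueness of the fundamental solution on the regular domain $V$ then identifies the series with $p^V$. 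Alternatively, one bypasses the $t\to0$ estimate by matching the series with the probabilistically defined $p^V$ through the skew-product decomposition of Brownian motion — a time-changed Bessel$(n)$ radial part and an independent spherical Brownian motion killed on leaving $\d$ — which is in effect the route taken in~\cite{BaS97}.
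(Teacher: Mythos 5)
Your sketch is essentially correct, but note that the paper itself gives no proof of this statement: Theorem \ref{th:hkvertex} is quoted verbatim from Ba\~nuelos--Smits \cite{BaS97} (``The following results are taken from \cite{BaS97}''), so there is no internal argument to compare against. What you propose is a sound reconstruction of the classical derivation: the polar-coordinate separation of variables is right, the conjugation $g_i=(r\rho)^{-(n-1)/2}u_i$ does produce the operator $\tfrac12\bigl(\partial_{rr}-(\nu_i^2-\tfrac14)r^{-2}\bigr)$ with $\nu_i^2=\lambda^i+(\tfrac n2-1)^2$, and undoing it against the Bessel semigroup kernel gives exactly the $i$-th summand of \eqref{eq:hkvertex}; the series estimates via \eqref{eq:besselbound}, Weyl asymptotics and an ultracontractivity-type sup bound on the $m^i$ are adequate for termwise differentiation. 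You also correctly isolate the one step a pure separation-of-variables argument leaves open, namely the identification of the series with $p^V$ through the initial condition (plus uniqueness of the fundamental solution on the regular cone), and your fallback --- the skew-product decomposition of Brownian motion into a Bessel$(n)$ radial part and an independent time-changed spherical Brownian motion killed on leaving the opening, whose Dirichlet eigenfunction expansion yields the angular series --- is indeed how the formula is obtained in the literature (DeBlassie's exit-time paper and \cite{BaS97}); that route buys the initial condition for free, since one starts from the probabilistically defined kernel rather than verifying that a candidate series is it. So: correct, consistent with the cited source, with the only caveat that the $t\to0^+$ concentration estimate in your first route is sketched rather than proved, and you should either carry it out or simply take the skew-product identification as the proof, as the original reference does.
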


Recall that the unique minimal positive harmonic function in $V$ is given by $v(x)=v(\abs{x}\theta) = \abs{x}^{\alpha^1-\rpar{\frac{n}{2}-1}}m^1(\theta)$. 

\begin{corollary}
\label{co:vertextimelimit}
For each $x,y\in V$, we have
\begin{align}
\label{eq:vertextimelimit01}
\lim_{t\to\infty} t^{1+\alpha^1} p^V(t,x,y) &= \frac{v(x)v(y)}{2^{\alpha^1} \Gamma(1+\alpha^1)} \\
\label{eq:vertexlimit02}
\lim_{t\to\infty} \frac{p^V(t,x,y)}{p^V(t,w,z)} &= \frac{v(x)v(y)}{v(w)v(z)}
\end{align}
Both limits are uniform in compact sets.
\end{corollary}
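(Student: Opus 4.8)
The plan is to derive Corollary \ref{co:vertextimelimit} directly from the Bessel-series representation \eqref{eq:hkvertex} together with the two-sided bound \eqref{eq:besselbound}. For \eqref{eq:vertextimelimit01}, I would multiply \eqref{eq:hkvertex} by $t^{1+\alpha^1}$ and examine the behaviour of each term as $t\to\infty$. For fixed $x=r\theta$ and $y=\rho\omega$, the argument $z=r\rho/t$ of the Bessel functions tends to $0$, and the lower bound in \eqref{eq:besselbound} together with the asymptotic $J_\nu(z)\sim z^\nu/(2^\nu\Gamma(1+\nu))$ as $z\to 0^+$ shows that
\begin{align*}
t^{1+\alpha^i} J_{\alpha^i}\!\rpar{\tfrac{r\rho}{t}} \xrightarrow[t\to\infty]{} \frac{(r\rho)^{\alpha^i}}{2^{\alpha^i}\Gamma(1+\alpha^i)}.
\end{align*}
Since $\alpha^i$ is strictly increasing in $i$ (because $\lambda^i$ is), the term $i=1$ dominates: $t^{1+\alpha^1}J_{\alpha^i}(r\rho/t)\to 0$ for every $i\ge 2$. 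Multiplying by the prefactor $\exp(-(r^2+\rho^2)/(2t))/(t(r\rho)^{n/2-1})\sim t^{-1}(r\rho)^{-(n/2-1)}$, the $i$-th term behaves like $t^{-1-\alpha^i}\cdot$const, so after multiplication by $t^{1+\alpha^1}$ only $i=1$ survives, yielding
\begin{align*}
\lim_{t\to\infty} t^{1+\alpha^1}p^V(t,x,y)
= \frac{(r\rho)^{\alpha^1-(n/2-1)}m^1(\theta)m^1(\omega)}{2^{\alpha^1}\Gamma(1+\alpha^1)}
= \frac{v(x)v(y)}{2^{\alpha^1}\Gamma(1+\alpha^1)}.
\end{align*}
Then \eqref{eq:vertexlimit02} follows immediately by taking the ratio, since the normalizing power $t^{1+\alpha^1}$ cancels and both numerator and denominator converge to nonzero limits (nonzero because $m^1>0$ in the interior of $\d$, being a first Dirichlet eigenfunction).

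The one genuine subtlety — and the step I expect to be the main obstacle — is justifying the interchange of the limit $t\to\infty$ with the infinite sum over $i$, i.e.\ controlling the tail $\sum_{i\ge 2}$ uniformly. Here I would use the upper bound in \eqref{eq:besselbound}: for $z=r\rho/t\le 1$ (true for large $t$) one has $J_{\alpha^i}(z)\le z^{\alpha^i}e^{z}/(2^{\alpha^i}\Gamma(1+\alpha^i)) \le e\, z^{\alpha^i}/(2^{\alpha^i}\Gamma(1+\alpha^i))$, and then I would bound $\abs{m^i(\theta)m^i(\omega)}$ via the standard sup-norm estimate for Laplace–Beltrami eigenfunctions on the Lipschitz domain $\d$ (these grow at most polynomially in $i$, say $\norm{m^i}_\infty \le C\lambda^i{}^{q}$ for some $q$ depending only on $n$), while $\Gamma(1+\alpha^i)$ grows super-exponentially in $\alpha^i\sim\sqrt{\lambda^i}$ and $z^{\alpha^i}\le z^{\alpha^2}$ for $z\le 1$. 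Combining these, the tail is dominated by $C(r\rho)^{\alpha^2}t^{-\alpha^2-1}\sum_{i\ge 2}\lambda^i{}^{2q}/(2^{\alpha^i}\Gamma(1+\alpha^i))$, a convergent series, so after multiplying by $t^{1+\alpha^1}$ the tail is $O(t^{\alpha^1-\alpha^2})\to 0$. With the tail controlled, the finitely many remaining terms $i=2,\dots,I_0$ are handled by the termwise limit above, and the $i=1$ term gives the claimed value.

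Finally, for the assertion that both limits are uniform on compact sets: all the estimates above are uniform for $(x,y)$ in a compact subset $K\times K'$ of $V\times V$, since $r=\abs{x}$ and $\rho=\abs{y}$ range over a compact subset of $(0,\infty)$ and $\theta,\omega$ stay in a compact subset of the interior of $\d$ (where $m^1$ is bounded away from $0$), and the convergence $\exp(-(r^2+\rho^2)/(2t))\to 1$, $t^{1+\alpha^i}J_{\alpha^i}(r\rho/t)\to (r\rho)^{\alpha^i}/(2^{\alpha^i}\Gamma(1+\alpha^i))$, and the tail bound are all uniform in these parameters; for \eqref{eq:vertexlimit02} one additionally uses that $v(w)v(z)$ is bounded away from $0$ on compacts, so the quotient converges uniformly as well. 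This completes the proof.
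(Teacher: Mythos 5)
Your proposal is correct and follows essentially the same route as the paper: expand $t^{1+\alpha^1}p^V(t,x,y)$ via the Bessel series \eqref{eq:hkvertex}, use the two-sided bound \eqref{eq:besselbound} to pass to the limit in the $i=1$ term, and show the tail $\sum_{i\ge2}$ is $O(t^{\alpha^1-\alpha^2})$. The one place where you are actually more careful than the paper is the tail estimate: the paper's displayed bound drops the factors $m^k(\theta)m^k(\omega)$ from the tail series (its constant $C=\sup_\d (m^1)^2$ only covers the $k=1$ term), so as written it tacitly needs some control on $\norm{m^k}_\infty$ for $k\ge2$. You supply exactly that control via a polynomial $L^\infty$ bound on Laplace--Beltrami eigenfunctions together with the super-exponential growth of $\Gamma(1+\alpha^i)$, which is the right way to make the dominated-convergence / Weierstrass $M$-test step airtight. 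Two tiny slips that do not affect the argument: $\alpha^i$ need not be \emph{strictly} increasing in $i$ (the paper only assumes $\lambda^1<\lambda^2\le\lambda^3\le\cdots$), though $\alpha^1<\alpha^2$ is all you use; and your final displayed tail bound omits the factor $(r\rho)^{-(n/2-1)}$ coming from the prefactor in \eqref{eq:hkvertex}, which is harmless on compacts. The deduction of \eqref{eq:vertexlimit02} from \eqref{eq:vertextimelimit01} and the uniformity argument match the paper.
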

\begin{proof}
Clearly, \eqref{eq:vertexlimit02} follows from \eqref{eq:vertextimelimit01}, so we only prove the latter. From Theorem \ref{th:hkvertex}, we get the bound \small
\begin{align*}
\abs{ t^{1+\alpha^1}p^V(t,x,y) - \frac{ v(x)v(y) }{2^{\alpha^1}\Gamma(1+\alpha^1)} } &\leq C
\abs{ \frac{t^{\alpha^1} e^{-\frac{r^2+s^2}{2t}} }{(rs)^{\frac{n}2-1}} J_{\alpha^1}\rpar{\frac{rs}{t}} - \frac{ (rs)^{\alpha^1-(\frac{n}2-1)} }{ {2^{\alpha^1}\Gamma(1+\alpha^1)} } }  +  \\
&\qquad + {t^{-(\alpha^2-\alpha^1)}} \sum_{k=2}^\infty \frac{ (rs)^{\alpha_k-(\frac{n}2-1)} }{2^{\alpha_k}\Gamma(1+\alpha_k)},
\end{align*}\normalsize
where $C=\sup_{\theta\in \d} m^1(\theta)^2$. The uniform convergence on compact sets for the first term is easily deduced from \eqref{eq:besselbound}. The series on the right hand side converges uniformly in compact sets, so the whole term converges to zero, as $t\to\infty$, since $\alpha^2>\alpha^1$. 
\end{proof}

\section{Asymptotics in a truncated cone}
\label{se:truncated}

The main goal of this section is to extend Corollary  \ref{co:vertextimelimit} to a truncated cone $\C = C(a,\d,R)$. As before, we assume that $R=1$ and $a=0$.

We will often use the following version of the Harnack inequality up to the boundary.

%

\begin{theorem}[From \cite{Sal81}, see also \cite{Mos64}]
\label{th:harnackboundary}
Let $O$ be a precompact, regular domain for the Dirichlet problem, and let $u\geq 0$ be a solution of the heat equation on $O\times [0,T)$ with Dirichlet boundary condition. Then, given $x\in O$, there is $C_1>0$ such that $u(t,z)\leq C_1 u(T,x)$, for all $(t,z)\in[0,T)\times\overline O$ where the constant $C_1$ depends only on $x$ and $T-t$.
\end{theorem}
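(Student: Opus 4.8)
The plan is to deduce this from two classical facts — Moser's interior parabolic Harnack inequality \cite{Mos64} and a parabolic boundary (Carleson-type) estimate — glued together by Harnack chains whose length is controlled by the precompactness of $O$. Throughout I fix $x\in O$, write $d(w)=\dist{w,\partial O}$, and aim for a constant that is uniform in $z\in\overline O$; that uniformity is the real content of the statement.

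First I would handle \emph{interior} points. Moser's inequality supplies a universal $c_0>1$ such that whenever $v\ge 0$ solves the heat equation on a full cylinder $B(w_0,2\rho)\times\rpar{\sigma_0-4\rho^2,\sigma_0}$, one has $v(\sigma,w)\le c_0\,v(\sigma_0,w_0)$ on the backward subcylinder $B(w_0,\rho)\times\rpar{\sigma_0-3\rho^2,\sigma_0-2\rho^2}$. Given $\delta>0$, I would join any $(t,z)$ with $d(z)\ge\delta$ and $t<T$ to the reference point $(T,x)$ by a finite chain of such cylinders lying inside $O\times\rpar{0,T}$ and iterate along it, obtaining $u(t,z)\le C\,u(T,x)$. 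Because $O$ is precompact and connected, the number of links — hence $C$ — can be bounded using only $\delta$, $\mathrm{diam}\,O$, $d(x)$ and $T-t$, and not the point $z$ itself; one expects $C$ to blow up as $T-t\downarrow 0$ (the cylinders shrink, the chain lengthens) and as $d(x)\downarrow 0$ (the denominator $u(T,x)$ may be small).

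The hard part is the region \emph{near $\partial O$}, where the chains above degenerate. Here I would invoke a parabolic Carleson estimate — this is the substance of \cite{Sal81} — available because $u\ge 0$ is caloric, vanishes continuously on the lateral boundary, and $O$ is regular for the Dirichlet problem, so that a barrier exists at every $\xi\in\partial O$: there are $r_0>0$ and $C_2>0$, depending only on $O$, such that for every $\xi\in\partial O$ and every time $s$,
\[
\sup\kpar{\,u(\sigma,w)\ :\ w\in B(\xi,r_0)\cap O,\ \sigma\in\rpar{s-r_0^2,s}\,}\ \le\ C_2\,u\rpar{s+2r_0^2,\,A(\xi)},
\]
where $A(\xi)\in O$ is a corkscrew point with $\abs{A(\xi)-\xi}\approx r_0$ and $d(A(\xi))\approx r_0$ — crucially an interior point at a \emph{fixed} distance from $\partial O$, and $C_2$ does not depend on $\xi$. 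Applying this with $\xi$ the boundary point nearest to a given $z$ with $d(z)<r_0$ reduces a bound at $(t,z)$ to a bound at $\rpar{t+2r_0^2,A(\xi)}$, which the interior step already covers.

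Taking $\delta=r_0$ and combining the two cases gives $u(t,z)\le C_1\,u(T,x)$ for all $(t,z)\in[0,T)\times\overline O$, with $C_1$ depending only on $x$, $T-t$ and $O$, uniformly in $z$. The one genuinely delicate ingredient is the Carleson estimate: what must be proved is that its constant does not deteriorate as $w\to\partial O$, and this rests on the regularity — in the applications of this paper, the Lipschitz character — of $\partial O$; the proof of it couples the maximum principle with the barrier at $\xi$ and a scale-by-scale iteration arranged so as not to accumulate constants. Everything else is bookkeeping with Moser's inequality and the compactness of $O$.
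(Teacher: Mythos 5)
The paper gives no proof of this statement at all — it is imported directly from Salsa \cite{Sal81} and Moser \cite{Mos64} — and your reconstruction (Moser's interior parabolic Harnack iterated along chains of cylinders, whose length is controlled by precompactness, together with Salsa's Carleson-type estimate at corkscrew points to handle the boundary region) is precisely the argument those references supply, so your route coincides with the paper's intended one. One small repair: for $t$ within $2r_0^2$ of $T$ your Carleson step refers to the time $t+2r_0^2>T$, from which Harnack cannot bring you back to $(T,x)$; run the Carleson estimate at scale $\min\rpar{r_0,\,c\sqrt{T-t}}$ instead of the fixed $r_0$, which is harmless because the statement allows $C_1$ to depend on $T-t$.
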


\begin{corollary}
\label{co:expharnackcone}
Let $V$ be a cone with vertex. For any $x\in V$ there is a constant $C_x>0$, only dependent on $x$, such that for all $y\in V$ the following inequality holds for all $t>1$:
\begin{align}
\label{eq:expharnackcone}
p^V(t,x,y) \leq C_x^{1+\abs{y}}p^V(t,x,x).
\end{align}
\end{corollary}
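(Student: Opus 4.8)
The plan is to exploit the explicit series representation of $p^V$ from Theorem \ref{th:hkvertex} together with the two-sided Bessel bound \eqref{eq:besselbound}, and to split the argument according to whether $|y|$ stays in a bounded region near $x$ (where the parabolic Harnack inequality up to the boundary, Theorem \ref{th:harnackboundary}, applies directly) or whether $|y|$ is large (where one must control the Gaussian factor $\exp(-(r^2+\rho^2)/2t)$ and the growth of the Bessel functions in the ratio). The key point is that the constant in front of $|y|$ in the exponent need not be sharp; we only need \emph{some} $C_x>1$.

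First I would fix $x=r\theta\in V$ and observe that, by scaling and the Harnack inequality of Theorem \ref{th:harnackboundary} applied on the precompact regular domain $V\cap B(0,m)$ for a suitable radius $m$ depending on $x$, there is a constant $A_x$ with $p^V(t,x,y)\le A_x\,p^V(t,x,x)$ for all $t>1$ and all $y$ with $|y|\le m$; in particular this handles all $y$ in a fixed bounded set. For the remaining $y$ with $\rho=|y|$ large, I would use \eqref{eq:hkvertex} and the upper Bessel bound in \eqref{eq:besselbound} term by term:
\begin{align*}
p^V(t,x,y) \le \frac{e^{-\frac{r^2+\rho^2}{2t}}}{t(r\rho)^{\frac n2-1}}\sum_{i=1}^\infty \frac{(r\rho/t)^{\alpha^i}}{2^{\alpha^i}\Gamma(1+\alpha^i)}e^{r\rho/t}\,m^i(\theta)m^i(\omega).
\end{align*}
Here $e^{-\rho^2/2t+r\rho/t}=e^{-(\rho-r)^2/2t}e^{r^2/2t}\le e^{r^2/2t}$, which is bounded for $t>1$; and, since the $\alpha^i$ grow (like $\sqrt{\lambda^i}$, hence like a power of $i$ by Weyl's law) while the $m^i$ are uniformly bounded on $\overline{\d}$ with polynomially growing sup-norms, the series $\sum_i (r\rho/t)^{\alpha^i}/(2^{\alpha^i}\Gamma(1+\alpha^i))\cdot\|m^i\|_\infty^2$ converges and, as a function of the single variable $s=r\rho/t$, is dominated by $C\,e^{D s}$ for suitable constants once $s\ge 0$ — in fact it is at most a constant multiple of $e^{D\sqrt{r\rho/t}}$, and since $t>1$ this is at most $C_x^{1+\rho}$ for an appropriate $C_x$ depending only on $x$. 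For the lower bound one keeps only the first term and uses the \emph{lower} Bessel bound: $p^V(t,x,x)\ge \frac{e^{-r^2/t}}{t\,r^{n-2}}\cdot\frac{(r^2/t)^{\alpha^1}}{2^{\alpha^1}\Gamma(1+\alpha^1)}m^1(\theta)^2$, which for $t>1$ is bounded below by $c_x\,t^{-(1+\alpha^1)}$. Dividing, the polynomial factors $t^{1+\alpha^1}$ and the fixed powers of $r,\rho$ are all absorbed into $C_x^{1+\rho}$ after possibly enlarging $C_x$, using that a polynomial in $\rho$ is $\le C^{1+\rho}$ eventually.

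The main obstacle is the bookkeeping in the large-$\rho$ regime: one must verify that the full spectral sum in \eqref{eq:hkvertex} does not blow up faster than exponentially in $\sqrt{r\rho/t}$, which requires the quantitative input that $\alpha^i\to\infty$ (so $\Gamma(1+\alpha^i)$ eventually dominates any fixed power raised to the $\alpha^i$) together with a crude polynomial bound $\|m^i\|_\infty\le C(i)$ — these are standard facts about Laplace–Beltrami eigenfunctions on the Lipschitz domain $\d$, but they need to be invoked carefully so that the resulting constant genuinely depends on $x$ alone and the exponent is linear in $|y|$. Once the estimate $p^V(t,x,y)\le C_x^{1+\rho}\,t^{-(1+\alpha^1)}$ and the matching lower bound $p^V(t,x,x)\ge c_x\,t^{-(1+\alpha^1)}$ are in hand for $t>1$, combining them with the bounded-$y$ case from Theorem \ref{th:harnackboundary} gives \eqref{eq:expharnackcone} after a final adjustment of the constant.
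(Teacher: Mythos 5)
Your argument takes a genuinely different route from the paper's. The paper does not touch the spectral series \eqref{eq:hkvertex} in this proof: it exploits the dilation invariance of the cone, observing that $B(x,r)\subset V$ with $\abs{x}=1$ implies $B(\nu x,r)\subset V$ for all $\nu\ge1$, and then chains the interior parabolic Harnack inequality along the radial segment from $x$ to $\abs{y}x$, incurring a factor $C^{1+\abs{y}}$; the heat-kernel scaling $p^V(t,x,y)=\lambda^{-n}p^V(t/\lambda^2,x/\lambda,y/\lambda)$ pulls $y$ back to the unit sphere, where Theorem \ref{th:harnackboundary} and the monotonicity of $t\mapsto p^V(t,x,x)$ (Lemma \ref{le:cms99}) finish. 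That argument is short, geometric, and needs no information about the higher eigenfunctions. Your series route also works, but it requires quantitative spectral input on the Lipschitz opening $\d$ --- Weyl-type growth of the $\alpha^i$ so the sum converges, and a polynomial bound on $\norm{m^i}_\infty$ (obtainable from a Gaussian upper bound for the heat kernel of $\d$) --- neither of which the paper invokes for this corollary. Two points in your sketch need repair: the sum $\sum_i (r\rho/t)^{\alpha^i}\norm{m^i}_\infty^2/(2^{\alpha^i}\Gamma(1+\alpha^i))$ grows like $e^{c\,r\rho/t}$ (its dominant terms have $\alpha^i$ of order $r\rho/t$), not like $e^{D\sqrt{r\rho/t}}$; the linear-in-the-exponent bound is still enough, since $r\rho/t\le r\rho$ for $t>1$ and $e^{cr\rho}=(e^{cr})^{\rho}$ is absorbed into $C_x^{1+\abs{y}}$. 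Also, $p^V(t,x,\cdot)$ does not vanish on the spherical cap of $\partial(V\cap B(0,m))$, so Theorem \ref{th:harnackboundary} does not apply to that domain verbatim; one should work in $V\cap B(0,2m)$ and keep $\abs{y}\le m$, or combine the boundary Harnack near $\partial V$ with the interior parabolic Harnack.
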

\begin{proof}
Assume $\abs{x}=1$, otherwise the corollary follows by scaling. The inequality holds for small $\abs{y}$, by a direct application of the boundary Harnack inequality (Theorem \ref{th:harnackboundary}), so we assume that $\abs{y} > 2$.

Let $r$ be positive, but small enough so that $B\rpar{x,r}\sub V$. It follows by scaling that $B\rpar{\nu x,r}\sub V$ for all $\nu\geq 1$. Thus, applying the standard parabolic Harnack inequality several times in the ball $B(0,r)$ to the function $u(s,z) = p^V(t+s, \nu x+z, y)$ for fixed, but arbitrary $\nu>1,y\in V$, we get
$$
p^V(t,\nu x,y)\leq C_2^{1+r\nu} p^V(t+1+r\nu,x,y)\leq C_2^{2+2r\nu}p^V(t+2+2r\nu,\nu x,y),
$$ 
for a positive constant $C_2$ that only depends on $x$. 

The heat kernel in $V$ has the following scaling property: 
$$
p^V(t,x,y) = \lambda^{-n} p^V\rpar{\frac{t}{\lambda^2},\frac{x}{\lambda},\frac{y}{\lambda}},\qquad \lambda> 0.
$$

From all the inequalities above, it follows that
\begin{align*}
p^V(t,x,y) &\leq C_3^{1+\abs{y}} p^V(t+1+r\abs{y},x\abs{y},y) \\
&= C_3^{1+\abs{y}} \abs{y}^{-n} p^V\rpar{ \frac{t+1+r\abs{y}}{\abs{y}^2}, x, \frac{y}{\abs{y}}} \\
&\leq C_1 C_3^{1+\abs{y}} \abs{y}^{-n} p^V\rpar{ \frac{t+1+r\abs{y}}{\abs{y}^2}+1, x, x}\\
&\leq C_1 C_3^{1+\abs{y}} \abs{y}^{-n} p^V\rpar{ \frac{t}{\abs{y}^2}, x, x},
\end{align*}
where the second to last line comes from the boundary Harnack inequality, whereas the last one comes form the fact that $t\mapsto p^V(t,x,x)$ is decreasing (see Lemma \ref{le:cms99}). Applying scaling once again, 
\begin{align*}
p^V(t,x,y) &\leq C_1 C_3^{1+\abs{y}} p^V\rpar{ t, x\abs{y}, x\abs{y}} \\
&\leq C_1 C_3^{3+3\abs{y}} p^V\rpar{ t, x, x},
\end{align*}
as desired.
\end{proof}

\begin{lemma}
\label{le:normalderivative}
Let $\C=C(a,\d,1)$, $\bd=a+\d$, and $V=C(a,\d,0)$. There is a universal constant $Q>0$ such that for any $x\in\C$, and $\xi \in\bd$,
\begin{align}
\limsup_{t\to \infty} \frac{\partial_n p^\C(t,x,\xi)}{p^V(t,x,x)} \leq  \frac{Q}{v(x)},
\end{align}
where $v$ is the unique minimal harmonic function in $V$, normalized as in \eqref{eq:coneharmonic}.
\end{lemma}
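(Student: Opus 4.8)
The plan is to compare the truncated cone $\C$ with its associated vertex cone $V$ via the strong Markov property, estimate the resulting normal derivative on the base, and then exploit the explicit asymptotics of $p^V$ from Corollary~\ref{co:vertextimelimit}. First I would apply Lemma~\ref{le:strongmarkov} with $U=\C$ and $O=V$, writing
\begin{align*}
p^V(t,x,\xi) = p^\C(t,x,\xi) + \int_0^t\int_{\partial\C} \tfrac12 \partial_n p^\C(s,x,\eta)\, p^V(t-s,\eta,\xi)\,\sigma(d\eta)\,ds,
\end{align*}
but since we only want a bound on $\partial_n p^\C(t,x,\xi)$ at a point $\xi\in\bd$ on the \emph{base}, I would instead differentiate an analogous identity in the normal direction at $\xi$, or—more robustly—use the boundary Harnack inequality (Theorem~\ref{th:harnackboundary}) directly. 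The key observation is that $z\mapsto p^\C(t,x,z)$ is a nonnegative solution of the heat equation on $\C$ vanishing on $\partial\C\supseteq\bd$, and on a fixed precompact neighborhood of $\bd$ inside $\C$ it is dominated, via the parabolic boundary Harnack principle, by the value of a comparable solution. Concretely, on a collar neighborhood of $\bd$ one can compare $p^\C(t,x,\cdot)$ with $p^V(t,x,\cdot)$, both of which vanish on $\bd$, so their normal derivatives at $\xi$ are comparable with a constant depending only on the geometry near $\bd$ (which, after our normalization $R=1$, is universal): $\partial_n p^\C(t,x,\xi) \le Q_0\, p^\C(t,x,\eta_0)$ for a fixed interior point $\eta_0$ at unit distance from $\bd$, uniformly in large $t$.

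Next I would bound $p^\C(t,x,\eta_0)$ in terms of $p^V(t,x,x)$. Since $\C\subset V$, we have $p^\C(t,x,\eta_0)\le p^V(t,x,\eta_0)$, and by Corollary~\ref{co:vertextimelimit},
\begin{align*}
\lim_{t\to\infty} \frac{p^V(t,x,\eta_0)}{p^V(t,x,x)} = \frac{v(x)v(\eta_0)}{v(x)v(x)} = \frac{v(\eta_0)}{v(x)}.
\end{align*}
Combining the two estimates gives $\limsup_{t\to\infty} \partial_n p^\C(t,x,\xi)/p^V(t,x,x) \le Q_0\, v(\eta_0)/v(x)$, and since $\eta_0$ is a fixed point determined only by the geometry of $\d$ (a point on the ray through $\xi$ at radius, say, $3/2$, which lies well inside $V$ because $\d$ is a proper subdomain), the quantity $Q=Q_0\,v(\eta_0)$ is a universal constant, yielding exactly the claimed inequality $\limsup_{t\to\infty}\partial_n p^\C(t,x,\xi)/p^V(t,x,x)\le Q/v(x)$.

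The main obstacle is the first step: making rigorous the comparison of normal derivatives of $p^\C(t,x,\cdot)$ and a reference solution on a collar of $\bd$, uniformly for large $t$. The boundary Harnack inequality as stated (Theorem~\ref{th:harnackboundary}) controls $u(t,z)\le C_1 u(T,x_0)$ for solutions vanishing on the boundary, but to pass to the \emph{normal derivative} one needs either a standard interior-type Schauder/gradient estimate near the flat-enough portion $\bd$ of $\partial\C$, or a Carleson-type estimate combined with the fact that on $\bd$ the boundary is smooth (it is a piece of a sphere). Since $\bd$ is a smooth hypersurface and $p^\C(t,x,\cdot)$ is caloric and vanishes there, the normal derivative at $\xi$ is comparable to $p^\C(t,x,\eta_0)/\mathrm{dist}(\eta_0,\bd)$ with a universal constant, by the parabolic boundary Harnack principle applied in a fixed cylinder $[T-2,T]\times B(\xi,1/2)\cap\C$; the time-uniformity comes from Lemma~\ref{le:cms99}, which guarantees $p^\C(t+s,\cdot,\cdot)/p^\C(t,\cdot,\cdot)\to 1$, so the window $[T-2,T]$ causes no loss in the $\limsup$. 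Assembling these ingredients, with careful bookkeeping of which constants depend only on $\d$, completes the argument.
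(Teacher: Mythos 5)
Your proposal takes a genuinely different route from the paper, and there is a gap at the crucial step. The paper's proof is organized around Chapman--Kolmogorov at time $1$: it writes $\partial_n p^\C(t+1,x,\xi)=\int_\C p^\C(t,x,z)\,\partial_n p^\C(1,z,\xi)\,dz$ and bounds the inner factor by comparison with the exterior of the unit ball, $U=B(0,1)^c$. Since $\C\subset U$ and both kernels vanish on $\bd\subset\{\abs{z}=1\}$, one has $0\le\partial_n p^\C(1,z,\xi)\le\partial_n p^U(1,z,\xi)\le A\,e^{-B\abs{z}^2}$, a bound that is uniform in $\xi\in\bd$ because $\partial U$ is a smooth sphere with no corner. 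The remaining factor $p^\C(t,x,z)\le p^V(t,x,z)$ is then treated by the dominated convergence theorem using the pointwise limit in \eqref{eq:vertexlimit02} and the exponential Harnack domination of Corollary~\ref{co:expharnackcone}, leaving a universal constant $Q=\int_\C v(z)A e^{-B\abs{z}^2}\,dz$.

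Your route replaces this with a direct parabolic boundary gradient estimate $\partial_n p^\C(t,x,\xi)\le Q_0\,p^\C(t,x,\eta_0)$, then $p^\C\le p^V$, then \eqref{eq:vertexlimit02}. The final reduction to \eqref{eq:vertexlimit02} is fine, but the gradient step is where the gap lies. First, Theorem~\ref{th:harnackboundary} bounds values of $u$, not its normal derivative, so it does not deliver the estimate as stated; you acknowledge this and propose a Schauder/Carleson-type fix. Second, and more seriously, the constant $Q_0$ must be uniform over all $\xi\in\bd$. Your $\eta_0$ is a point on the ray through $\xi$ at radius $3/2$. When $\xi$ approaches the corner $\partial\bd$ where the base meets the lateral boundary, the point $\eta_0$ is \emph{not} at a fixed positive distance from $\partial\C$ (it approaches the lateral wall), so there is no fixed cylinder $[T-2,T]\times(B(\xi,1/2)\cap\C)$ in which to apply the gradient estimate, and $Q_0(\xi)$ can blow up; the domain is only Lipschitz at that corner. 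You might hope the blow-up of $Q_0(\xi)$ cancels against $v(\eta_0)=(3/2)^\kappa m^1(\theta_\xi)\to 0$, but matching those rates for general Lipschitz $\d$ is delicate and is not argued. The paper's monotonicity bound $\partial_n p^\C\le\partial_n p^U$ (exterior of a ball) bypasses the corner entirely, which is the essential trick; I would recommend keeping your DCT/\eqref{eq:vertexlimit02} step but replacing the boundary gradient estimate with that comparison and the Chapman--Kolmogorov decomposition.
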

\begin{proof}
By a translation of coordinates, we can assume $a=0$. Set $U=B(0,1)^c$. By monotonicity of domains, and since both $p^\C(t,x,\cdot)$ and $p^U(t,x,\cdot)$ vanish on $\bd$, we have that $0\leq \partial_n p^\C(t,x,\xi) \leq \partial_n p^U(t,x,\xi)$. Recall that there are constants $A>0,B>0$ such that $\partial_n p^U(1,x,\xi)\leq A\exp(-B\abs{x}^2)$, so $ 0\leq \partial_n p^\C(1,x,\xi) \leq A\exp(-B\abs{x}^2)$ for $\xi\in \bd$. These bounds allows us to compute the normal derivative from  the Chapman-Kolmogorov equation as follows
\begin{align}
\label{eq:normalderivativechapman}
\partial_n p^\C(t+1,x,\xi) &= \int_{\C} p^\C(t,x,z) \partial_np^\C(1,z,\xi)dz \\
\nonumber &\leq \int_{\C} p^V(t,x,z) \partial_np^\C(1,z,\xi)dz.
\end{align}
Thus,
\begin{align*}
\frac{\partial_n p^\C(t+1,x,\xi)}{p^V(t,x,x)} &\leq \int_{\C} \frac{p^V(t,x,z) }{p^V(t,x,x)} \partial_np^\C(1,z,\xi)dz.
\end{align*}
We intend to apply the \dct\ to the integral on the right hand side. Equation \eqref{eq:vertexlimit02} shows pointwise convergence as $t\to\infty$, and Corollary \ref{co:expharnackcone} together with the remarks at the beginning of this proof show that  the integrand is dominated. Therefore
\begin{align*}
\limsup_{t\to \infty} \frac{\partial_n p^\C(t+1,x,\xi)}{p^V(t,x,x)} &\leq \frac{1}{v(x)} \int_{\C} v(z) \partial_np^\C(1,z,\xi)dz.
\end{align*}
The integral can be estimated using the explicit formula for $v(z)$, and the bound for $\partial_np^\C(1,z,\xi)$ discussed at the beginning of this proof. Finally, we use Lemma \ref{le:cms99} to conclude.

%
%
%
%
%
\end{proof}

\begin{theorem}
\label{th:vertextruncated}
Let $V$ a cone with opening $\d$ and vertex $a$, and let its truncated version be $\C=C(a,\d,1)$. Let $w$  be the unique minimal positive harmonic function in $\C$. Then, for all $x,y\in \C$,
\begin{align}
\label{eq:vertextruncated}
\lim_{t\to\infty} t^{1+\alpha^1} p^\C(t,x,y)  = \frac{ w(x)w(y)}{2^{\alpha^1}\Gamma(1+\alpha^1)},
\end{align}
where $\alpha^1$ is the character of $\C$. The limit is in the sense of uniform convergence on compact sets.
\end{theorem}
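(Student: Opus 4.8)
The strategy is to bootstrap from the vertex-cone result (Corollary \ref{co:vertextimelimit}) to the truncated cone using the strong Markov decomposition of Lemma \ref{le:strongmarkov}. Write $U = B(a,1)^c$ and apply \eqref{eq:strongmarkov} with $O = V$ (the full cone) and the subdomain... actually the cleaner route is to relate $p^\C$ and $p^V$ directly: since $\C \subset V$, Lemma \ref{le:strongmarkov} gives
\begin{align*}
p^V(t,x,y) = p^\C(t,x,y) + \int_0^t\int_{\bd} \tfrac12 \partial_n p^\C(s,x,\xi)\, p^V(t-s,\xi,y)\,\sigma(d\xi)\,ds,
\end{align*}
for $x,y\in\C$. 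Multiplying by $t^{1+\alpha^1}$, the left side converges to $v(x)v(y)/(2^{\alpha^1}\Gamma(1+\alpha^1))$ by Corollary \ref{co:vertextimelimit}. So it suffices to identify the limit of $t^{1+\alpha^1}$ times the boundary integral and show $t^{1+\alpha^1}p^\C(t,x,y)$ converges.

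First I would establish that $\{t^{1+\alpha^1}p^\C(t,\cdot,\cdot)\}$ is precompact in the topology of uniform convergence on compact sets, using Lemma \ref{le:cms00} applied with $a(t)=t^{1+\alpha^1}$ (which satisfies \eqref{eq:assumtion_a} and the condition \eqref{eq:cms00}); boundedness on compact sets follows from the domination $p^\C \le p^V$ and Corollary \ref{co:vertextimelimit}. By Lemma \ref{le:limitharmonic}, any limit point $h(x,y)$ is symmetric, nonnegative, harmonic in each variable, and vanishes continuously on $\partial\C$. Since the unique minimal positive harmonic function on $\C$ vanishing on $\partial\C$ is $w$ (and by the Martin-boundary uniqueness for a truncated cone, every such nonnegative harmonic function is a multiple of $w$), harmonicity and symmetry force $h(x,y) = c\, w(x)w(y)$ for some constant $c\ge 0$. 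The whole content of the theorem is then to pin down $c = 1/(2^{\alpha^1}\Gamma(1+\alpha^1))$ and to show the limit does not depend on the subsequence.

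To compute $c$, I would pass to the limit in the displayed decomposition. Using Lemma \ref{le:normalderivative}, $\partial_n p^\C(s,x,\xi)$ decays like $p^V(s,x,x)$ up to a constant, and $p^V(s,x,x) \sim v(x)^2 s^{-(1+\alpha^1)}/(2^{\alpha^1}\Gamma(1+\alpha^1))$; combined with $p^V(t-s,\xi,y) \sim v(\xi)v(y)(t-s)^{-(1+\alpha^1)}/(2^{\alpha^1}\Gamma(1+\alpha^1))$ and the scaling relation \eqref{eq:cms99} (so that $p^V(t-s,\xi,y)/p^V(t,x,y)\to v(\xi)v(y)/(v(x)v(y))$ uniformly for $s$ in compacts), one shows that $t^{1+\alpha^1}$ times the boundary integral converges, and that the limit of $t^{1+\alpha^1}p^\C(t,x,y)$ must equal $\frac{v(x)v(y)}{2^{\alpha^1}\Gamma(1+\alpha^1)} - (\text{boundary term})$. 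Recognizing $w(x) = v(x) - \EE_x v(B_{T^\C})$ from \eqref{eq:truncatedharmonic}, and that the balayage of $v$ onto $\bd$ is exactly represented by the kernel $\tfrac12\partial_n p^\C(\cdot,x,\xi)$ integrated in time (this is the $G$-function identity behind \eqref{eq:harmonic_extension}), the boundary term collapses to $\frac{1}{2^{\alpha^1}\Gamma(1+\alpha^1)}\bigl(v(x)v(y) - w(x)w(y)\bigr)$, leaving $c = 1/(2^{\alpha^1}\Gamma(1+\alpha^1))$ independent of the subsequence.

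The main obstacle is the interchange of limit and integral in the boundary term: the integral $\int_0^t$ runs over all of $[0,t]$, and both the small-$s$ and the $s$-near-$t$ regimes need care. For $s$ in a fixed compact $[0,M]$ one uses Lemma \ref{le:normalderivative} plus Corollary \ref{co:vertextimelimit} to get pointwise convergence of the rescaled integrand and a dominating function (via the Gaussian/$p^V$ upper bounds and Corollary \ref{co:expharnackcone}); for $M \le s \le t$ one must show the contribution is uniformly small, which requires a quantitative decay estimate $\partial_n p^\C(s,x,\xi)\le C_x(1+s)^{-(1+\alpha^1)}$ and $\int_{\bd} p^V(t-s,\xi,y)\sigma(d\xi) \le C$, then splitting $[M,t]$ at $t/2$ and using monotonicity/$a(t+s)/a(t)\to1$. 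Making this tail estimate rigorous — essentially a dominated-convergence argument uniform in $t$ — is the technical heart of the proof; everything else is identification of harmonic functions and bookkeeping with the known asymptotics.
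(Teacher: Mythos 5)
Your skeleton --- the strong Markov decomposition $p^V(t,x,y)=p^\C(t,x,y)+\frac12\int_0^t\int_{\bd}\partial_n p^\C(s,x,\xi)\,p^V(t-s,\xi,y)\,\sigma(d\xi)\,ds$, precompactness via Lemmas \ref{le:cms00} and \ref{le:limitharmonic} together with Corollary \ref{co:expharnackcone}, and the conclusion that every limit point has the form $\eta\,w(x)w(y)$ with $\eta\le 1/(2^{\alpha^1}\Gamma(1+\alpha^1))$ by monotonicity --- is exactly the paper's. The gap is in how you pin down $\eta$. You assert that the contribution of $s\in[M,t]$ is ``uniformly small'' and that the boundary term ``collapses'' to $\bigl(v(x)v(y)-w(x)w(y)\bigr)/(2^{\alpha^1}\Gamma(1+\alpha^1))$. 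Neither step is available. The near-$t$ piece ($s\in[t/2,t]$) is not negligible: after multiplying by $t^{1+\alpha^1}$ it converges to the strictly positive quantity $w(x)\bigl(v(y)-w(y)\bigr)/(2^{\alpha^1}\Gamma(1+\alpha^1))$, and computing this limit exactly requires the sharp large-time asymptotics $t^{1+\alpha^1}\partial_n p^\C(t,x,\xi)\to w(x)\partial_n w(\xi)/(2^{\alpha^1}\Gamma(1+\alpha^1))$ --- which is essentially the theorem being proved (the paper only obtains it afterwards, in \eqref{eq:tandnormal}); Lemma \ref{le:normalderivative} gives merely a $\limsup$ bound with an unidentified constant $Q$ and cannot produce the exact constant. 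Moreover, the particular bounds you propose do not even yield a finite $\limsup$: combining $\partial_n p^\C(s,x,\xi)\le C_x(1+s)^{-(1+\alpha^1)}$ with only a pointwise-in-$s$ bound $\int_{\bd}p^V(t-s,\xi,y)\,\sigma(d\xi)\le C$ gives $t^{1+\alpha^1}\int_M^t(1+s)^{-(1+\alpha^1)}ds\asymp t^{1+\alpha^1}M^{-\alpha^1}$, which blows up; one must instead freeze the decay of $\partial_n p^\C$ on $s\ge t/2$ and integrate $p^V$ in time up to the Green function, yielding a bound of order $v(x)G^V(\bd,y)$ --- bounded, but not small. Finally, the ``balayage identity'' you invoke (time-integrating $\tfrac12\partial_n p^\C(\cdot,x,\xi)$ against $v$) is exactly $\EE_x v(B_{T^\C})=v(x)-w(x)$ and accounts only for the small-$s$ part, giving $v(y)\bigl(v(x)-w(x)\bigr)/(2^{\alpha^1}\Gamma(1+\alpha^1))$; it does not produce the full claimed collapse.

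The missing idea, which is how the paper closes the loop, is to accept the non-sharp Fatou/Lemma \ref{le:normalderivative} bound on the $s\in[t/2,t]$ piece, arriving at the inequality $\frac{w(x)v(y)}{2^{\alpha^1}\Gamma(1+\alpha^1)}\le \eta^*\,w(x)w(y)+\frac{Q_1\,v(x)\,G^V(\bd,y)}{2^{\alpha^1}\Gamma(1+\alpha^1)}$, and then to divide by $v(y)$ and let $y\to\infty$ radially in $\C$: the Green term $G^V(\bd,y)$ stays bounded while $v(y)\to\infty$ and $w(y)/v(y)\to 1$, which forces $\eta^*\ge 1/(2^{\alpha^1}\Gamma(1+\alpha^1))$ and, with the monotonicity upper bound, identifies the unique limit point. (Your computation could also be repaired non-circularly: along a subsequence where $t^{1+\alpha^1}p^\C\to\eta\,w\otimes w$, transfer this to $t^{1+\alpha^1}\partial_n p^\C(t,x,\xi)\to\eta\,w(x)\partial_n w(\xi)$ via the Chapman--Kolmogorov identity \eqref{eq:normalderivativechapman} and dominated convergence, then use the extension identity $v(y)=w(y)+\frac12\int_{\bd}G^V(y,\xi)\partial_n w(\xi)\,\sigma(d\xi)$ to obtain a closed equation for $\eta$; but none of this appears in your write-up, so as it stands the identification of the constant is circular.)
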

\begin{proof} The proof relies on equation \eqref{eq:strongmarkov} and the \dct. Let
\begin{align*}
I_1(t) &= \frac12 \int_0^{t/2}\int_{\bd} \partial_n p^\C(s,x,\xi) p^V(t-s,\xi,y) \sigma(d\xi) ds \\
I_2(t) &= \frac12 \int_{t/2}^t\int_{\bd} \partial_n p^\C(s,x,\xi)  p^V(t-s,\xi,y) \sigma(d\xi) ds.
\end{align*}
Then, equation \eqref{eq:strongmarkov} and Theorem \ref{th:hkvertex} yield,
\begin{align}
\label{eq:truncated01}
\frac{v(x)v(y)}{2^{\alpha^1}\Gamma(1+\alpha^1)} &\leq \limsup_{t\to\infty} t^{1+\alpha^1} p^\C(t,x,y) + \limsup_{t\to\infty} t^{1+\alpha^1} \rpar{I_1(t) + I_2(t)}.
\end{align}

We start by studying $I_1(t)$.  For $0\leq s\leq t/2$, Theorem \ref{th:hkvertex} shows that $t^{1+\alpha^1}p^V(t-s,\xi,y)$ converges to $\frac{ v(\xi)v(y)}{2^{\alpha^1}\Gamma(1+\alpha^1)}$. Besides, by using  Corollary \ref{co:expharnackcone} we get the following bound:
\begin{align*}
t^{1+\alpha^1} p^V(t-s,\xi,y) &= 2^{1+\alpha^1} (t/2)^{1+\alpha^1} p^V(t-s,\xi,y) \\
&\leq 2^{1+\alpha^1}  C_1C_2^{1+\abs{y}}(t/2)^{1+\alpha^1}p^V(t-s,x,x) \\
&\leq 2^{1+\alpha^1} C_1C_2^{1+\abs{y}} \rpar{t/2}^{1+\alpha^1} p^V(t/2,x,x).
\end{align*}
The right hand side is uniformly bounded for $t>1$, so the \dct\ applies:\small
\begin{align*}
\lim_{t\to\infty} t^{1+\alpha^1} I_1(t) &= \frac12 \int_0^\infty\!\!\! \int_{\bd} \partial_n p^\C(s,x,\xi) \frac{v(\xi) v(y) }{2^{\alpha^1}\Gamma(1+\alpha^1)} \sigma(d\xi)ds = \frac{ v(y) \EE_x(v(B_{T^\C})) }{2^{\alpha^1}\Gamma(1+\alpha^1)}.
\end{align*}\normalsize

Next, we study the asymptotics of $I_2(t)$. Since we don't have sharp asymptotics for $\partial_n p^\C$ yet, we are not able to use the \dct. Instead, we will resort to Fatou's lemma. For $t/2\leq s\leq t$, Lemma \ref{le:normalderivative} an Theorem \eqref{th:hkvertex} imply that 
$$
\limsup_{t\to\infty} t^{1+\alpha^1}\partial_n p^\C(s,x,\xi) \leq \frac{Q_1 v(x)}{2^{\alpha^1}\Gamma(1+\alpha^1)},
$$
where $Q_1$ only depends on $x$. To show domination, we combine equations \eqref{eq:expharnackcone}  and  \eqref{eq:normalderivativechapman} to get the bound
\begin{align*}
t^{1+\alpha^1}\partial_n p^\C(s,x,\xi) &\leq t^{1+\alpha^1} \int_{\C} p^V(s-1,x,z) \partial_np^\C(1,z,\xi)dz \\
&\leq t^{1+\alpha^1} p^V(t/2-1,x,x) \int_{\C} C_2^{1+\abs{z} } \partial_np^\C(1,z,\xi)dz.
\end{align*}
The right hand side is uniformly bounded in $t>2$ by a constant $Q_2$ that only depends on $x$. It follows that
\begin{align*}
\limsup_{t\to\infty} t^{1+\alpha^1} I_2(t) &\leq \frac{Q_1 v(x)}{2^{\alpha^1}\Gamma(1+\alpha^1)} \int_0^\infty\int_{\bd} p^V(s,\xi,y) \sigma(d\xi)ds \\
&= \frac{Q_1 v(x)}{2^{\alpha^1}\Gamma(1+\alpha^1)}  G^V(\bd,y).
\end{align*}

Using these two estimates in equation \eqref{eq:truncated01} we obtain,\small
\begin{align*}
\frac{v(x)v(y)}{2^{\alpha^1} \Gamma(1+\alpha^1)} \leq  \limsup_{t\to\infty} t^{1+\alpha^1} p^\C(t,x,y) + \frac{v(y) \EE_x(v(B_{T^\C})) + Q_1 v(x)G^V(\Gamma,y)}{2^{\alpha^1} \Gamma(1+\alpha^1)}.
\end{align*}\normalsize
Recall that $w(x) = v(x) - \EE_x(v(B_{T^\C}))$ for $x\in\C$. Thus,
\begin{align}
\label{eq:truncated02}
\frac{w(x)v(y)}{2^{\alpha^1} \Gamma(1+\alpha^1)} \leq  \limsup_{t\to\infty} t^{1+\alpha^1} p^\C(t,x,y) + \frac{ Q_1 v(x)G^V(\Gamma,y)}{2^{\alpha^1} \Gamma(1+\alpha^1)}.
\end{align}

We will deduce the theorem from this last estimate. By Corollary \ref{co:expharnackcone}, it is possible to apply Lemma \ref{le:limitharmonic} to $t^{1+\alpha^1}p^\C(t,x,y)$. Therefore, any limit point of this family has the form $\eta w(x)w(y)$ for some $\eta\geq 0$. Different limit points will correspond to different values of $\eta$. We will show that this is not the case: by monotonicity of domains, $\eta w(x)w(y) \leq \frac{v(x)v(y)}{2^{\alpha^1}\Gamma(1+\alpha^1)}$, and since $w(z)$ and $v(z)$ have the same asymptotic behavior for radially convergent $z\to\infty$, we deduce that $\eta\leq \frac{1}{2^{\alpha^1}\Gamma(1+\alpha^1)}$. Let $\eta^*=\sup\eta$, where the supremum is taken over all possible limit points. Equation \eqref{eq:truncated02} then yields
\begin{align*}
\frac{w(x)v(y)}{2^{\alpha^1} \Gamma(1+\alpha^1)} \leq  \eta^* w(x)w(y) + \frac{ Q_1 v(x)G^V(\Gamma,y)}{2^{\alpha^1} \Gamma(1+\alpha^1)}.
\end{align*}
Dividing this equation by $v(y)$ and taking $y$ radially to infinity, the second term on the right hand side vanishes in the limit as $G^V(\Gamma,y)$ is bounded for $y$ away form $\Gamma$. We obtain that $\eta^*\geq \frac{1}{2^{\alpha^1}\Gamma(1+\alpha^1)}$, which shows that the only  possible limit point is the one given by \eqref{eq:vertextruncated}. Uniform convergence on compact sets follows from Lemma \ref{le:cms00}.
\end{proof}

\section{Asymtotics in multicone domains}
\label{se:multicone}

We start by fixing $x_0\in \O$, and a sequence $(t_k)$ such that 
$$
F(x,y) = \lim_{k\to\infty} \frac{p(t_k,x,y)}{p(t_k,x_0,x_0)} =\sum_{i,j=1}^k \gamma_{ij}u_i(x)u_j(y),
$$ 
where the converges is uniform in compact sets of $\O\times\O$, and $u_j$ are the minimal harmonic functions in $\O$. This is obtained by a double application of Lemma \ref{le:extensions}. The coefficients $\gamma_{ij}\geq 0$ might depend on the sequence $(t_k)$. Notice that $F(x_0,x_0)=1$.

By passing  to  subsequences of $(t_k)$, we can assume that for all $j=1,\ldots,k$ we have that 
$$
F_j(x,y) = \lim_{k\to\infty} \frac{p^j(t_k,x,y)}{p(t_k,x_0,x_0)} =\mu_j w_j(x)w_j(y)
$$
is well defined. The convergence is uniform in compact sets of $\O_j\times\O_j$. The coefficient $\mu_j\geq 0$ may also depend on the sequence $(t_k)$. 

Our goal is to compute explicitly the coefficients $\gamma_{ij}$. In order to do this, we will use equation \eqref{eq:strongmarkov} with $O=\O$, and $U=\O_j$, and estimate the integral involved in \eqref{eq:strongmarkov}. 

It will be convenient to  fix points $\xi_j\in \bd_j$, and $z_j\in \O_j$. For $x\in\O_j$, $y\in\O$, $j=1,\ldots,\k$, we define the following object
\begin{align}
\label{eq:integral}
I^{x,y}_j(a,b;t) &= \int_a^{b}\int_{\bd_j} \PP_x(B_{T^j}\in d\xi,T^j\in du)p(t-u,\xi,y).
\end{align}
Most of the technical work of this section will be devoted to find convenient estimates for $I^{x,y}_j$.

We start  with a lemma about the function $F_j$. Recall that $\M$ denotes the set of maximal indices. 

\begin{lemma}
\label{le:conetodomain}
We have that $\mu_j=\mu$ for $j\in\M$, and $\mu_j=0$ for $j\notin\M$. Also,
there is a constant $C$, independent of the sequence $(t_k)$, such that $\mu\leq C$.
\end{lemma}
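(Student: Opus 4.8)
The plan is to exploit the asymptotic formula for truncated cones (Theorem~\ref{th:vertextruncated}) together with the decomposition identity~\eqref{eq:strongmarkov} applied with $O=\O$, $U=\O_j$, in order to relate the scaled heat kernel $p(t_k,x_0,x_0)$ on the whole domain to the scaled heat kernels $p^j(t_k,\cdot,\cdot)$ on each branch. First I would pin down the decay rate: from Theorem~\ref{th:vertextruncated} we know that for each branch $\O_j$, $t^{1+\alpha_j}p^j(t,x,y)\to w_j(x)w_j(y)/(2^{\alpha_j}\Gamma(1+\alpha_j))$. Since $\alpha=\min_j\alpha_j$, for a non-maximal index $j\notin\M$ we have $\alpha_j>\alpha$, so $t^{1+\alpha}p^j(t,x,y)\to 0$; hence if $p(t_k,x_0,x_0)$ decays no faster than $t_k^{-(1+\alpha)}$ (which I will need to establish), the ratio $p^j(t_k,x,y)/p(t_k,x_0,x_0)$ forces $\mu_j=0$ for $j\notin\M$. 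Conversely, for $j\in\M$ we have $\alpha_j=\alpha$ exactly, so $p^j(t_k,x,y)/p(t_k,x_0,x_0)$ converges provided $t_k^{1+\alpha}p(t_k,x_0,x_0)$ converges along the subsequence to a positive constant; that constant, up to the normalization $w_j(x)w_j(y)/(2^\alpha\Gamma(1+\alpha))$ and the factor $\lim t_k^{1+\alpha}p(t_k,x_0,x_0)$, is the same for every maximal $j$. This is exactly the assertion $\mu_j=\mu$ for $j\in\M$.

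To make this rigorous I would argue as follows. By Lemma~\ref{le:extensions} and the construction preceding this lemma, $F(x_0,x_0)=1$, and the family $p(t,x_0,x_0)$ is comparable along the chosen subsequence; the content of Lemma~\ref{le:cms99} gives that $t\mapsto p(t,x,x)$ is decreasing and that $p(t+s,x,y)/p(t,x,y)\to 1$. The key quantitative input is a two-sided control of $p(t,x_0,x_0)$ by $t^{-(1+\alpha)}$. The lower bound $p(t,x_0,x_0)\ge c\, t^{-(1+\alpha)}$ for large $t$ comes from domain monotonicity: $\O$ contains (a translate of) a truncated cone $\O_l$ with $l\in\M$ having character exactly $\alpha$, so $p(t,x_0,x_0)\ge p^l(t,x_0,x_0)\sim w_l(x_0)^2 t^{-(1+\alpha)}/(2^\alpha\Gamma(1+\alpha))$, once $x_0$ is taken (or Harnack-transported) into that branch. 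For the matching upper bound I would use~\eqref{eq:strongmarkov} with $U=\O_j$ for a fixed branch together with the integral estimates on $I^{x,y}_j$ that the section announces it will prove; since $p$ restricted to a branch behaves like $p^j$, and the boundary integral terms contribute at a controlled rate, one obtains $p(t,x_0,x_0)\le C\,t^{-(1+\alpha)}$ for large $t$. Combining the two, $t^{1+\alpha}p(t,x_0,x_0)$ stays bounded between two positive constants, and in particular $\mu = \lim_k t_k^{1+\alpha}p(t_k,x_0,x_0)\cdot(\text{normalization})$ exists along the subsequence, is finite, is the same for all $j\in\M$, and admits a bound $\mu\le C$ with $C$ depending only on $\O$ (through the constants in the Harnack and cone estimates), not on $(t_k)$.

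Finally, to get $\mu_j=0$ for $j\notin\M$: fix such a $j$, pick $x,y\in\O_j$, and write $p^j(t_k,x,y)/p(t_k,x_0,x_0) = \big(t_k^{1+\alpha_j}p^j(t_k,x,y)\big)\cdot t_k^{-(\alpha_j-\alpha)}\cdot\big(t_k^{1+\alpha}p(t_k,x_0,x_0)\big)^{-1}$. The first factor converges to $w_j(x)w_j(y)/(2^{\alpha_j}\Gamma(1+\alpha_j))$ by Theorem~\ref{th:vertextruncated}, the third factor converges to a finite nonzero limit by the previous paragraph, and the middle factor tends to $0$ since $\alpha_j>\alpha$. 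Hence $\mu_j w_j(x)w_j(y)=0$, and since $w_j$ is a genuine positive harmonic function (not identically zero), $\mu_j=0$.

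The main obstacle I anticipate is the upper bound $p(t,x_0,x_0)\le C t^{-(1+\alpha)}$, equivalently showing $\mu<\infty$ with a bound uniform in the sequence: this is where the as-yet-unproved estimates on $I^{x,y}_j$ are needed, and one must check that the boundary-crossing contributions through the bases $\bd_j$ — which is how a path in a branch re-enters the core and possibly other branches — do not inflate the decay rate. Controlling $I^{x,y}_j(0,t;t)$ at order $t^{-(1+\alpha)}$, using the exponential-in-$|y|$ Harnack bound of Corollary~\ref{co:expharnackcone} transplanted to the multicone via~\eqref{eq:harmonic_extension}–\eqref{eq:harmonic_extension_2}, and splitting the time integral at $t/2$ as in the proof of Theorem~\ref{th:vertextruncated}, is the technical heart of the argument; everything else is bookkeeping with domain monotonicity and the already-established cone asymptotics.
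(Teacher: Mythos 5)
Your plan elevates the two-sided bound $c\,t^{-(1+\alpha)}\le p(t,x_0,x_0)\le C\,t^{-(1+\alpha)}$ to the ``key quantitative input,'' and you yourself identify the upper bound as the technical heart — but you never prove it: you defer it to ``the integral estimates on $I^{x,y}_j$ that the section announces it will prove.'' That deferral is where the proposal breaks down. In the paper's logical order those estimates (Lemma \ref{le:fundamental}, in particular \eqref{eq:fundtMt}) are proved \emph{using} Lemma \ref{le:conetodomain}, and the bound $\limsup_t t^{1+\alpha}p(t,x,y)<\infty$ is Lemma \ref{th:limitsintexist}, established only after Lemma \ref{le:fundamental} and Remark \ref{re:gammastar}. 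So as written your argument is circular, or at best leaves its hardest step as an unproved promise. Moreover, that step is not actually needed: the lemma never claims $\mu>0$, and the upper bound on $p(t,x_0,x_0)$ is the only thing positivity of $\lim_k t_k^{1+\alpha}p(t_k,x_0,x_0)$ would buy you.

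The paper avoids the issue entirely. For $\mu_j=0$ ($j\notin\M$) and $\mu_j=\mu_l$ ($j,l\in\M$) one compares two \emph{branches} rather than a branch against $p(t,x_0,x_0)$: writing, for $x\in\O_j$, $y\in\O_l$,
$\frac{p^j(t,x,x)}{p(t,x_0,x_0)}=\frac{t^{1+\alpha_j}p^j(t,x,x)}{t^{1+\alpha_l}p^l(t,y,y)}\cdot\frac{p^l(t,y,y)}{p(t,x_0,x_0)}\cdot t^{-(\alpha_j-\alpha_l)}$,
the first factor converges by Theorem \ref{th:vertextruncated}, the second converges to $\mu_l w_l(y)^2$, which is finite by construction (domain monotonicity gives $p^l\le p$ and $F$ is finite), and the third tends to $0$ when $\alpha_j>\alpha_l$ and equals $1$ when $\alpha_j=\alpha_l$; no control of $p(t,x_0,x_0)$ by a power of $t$ is used at any point. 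The bound $\mu\le C$ is then exactly your lower-bound step in disguise: parabolic Harnack plus domain monotonicity give $\frac{p^j(t_k,z_j,z_j)}{p(t_k+2,x_0,x_0)}\le C_H^2\frac{p^j(t_k,z_j,z_j)}{p(t_k,z_j,z_j)}\le C_H^2$, and Lemma \ref{le:cms99} identifies the limit of the left-hand side as $\mu\, w_j(z_j)^2$, whence $\mu\le C_H^2/w_j(z_j)^2$ with a constant independent of $(t_k)$. So keep your Harnack/monotonicity lower-bound argument (that part is correct and is all that is required), replace the comparison of $p^j$ with $t^{1+\alpha}p(t,x_0,x_0)$ by the two-branch ratio identity above, and delete the upper bound — and with it the appeal to the not-yet-available $I^{x,y}_j$ estimates.
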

\begin{proof}
Recall that $\alpha_j$ denotes the character of the branch $\O_j$. For $j,l=1,\ldots,\k$, and points $x\in\O_j$, $y\in\O_l$ we have
$$
 \frac{p^j(t,x,x)}{p(t,x_0,x_0)} =  \frac{t^{1+\alpha_j} p^j(t,x,x)}{t^{1+\alpha_l} p^l(t,y,y)} \frac{p^l(t,y,y)} {p(t,x_0,x_0)}  \frac{1}{t^{\alpha_j-\alpha_l}}.
$$
It follows that 
$$
\mu_j w_j(x)^2 = \lim_{k\to\infty}  \frac{p^j(t_k,x,x)}{p(t_k,x_0,x_0)} = \frac{2^{\alpha_l} \Gamma(1+\alpha_l) w_j(x)^2}{2^{\alpha_j}\Gamma(1+\alpha_j) w_l(y)^2} \mu_l w_l(y)^2 \lim_{t\to\infty} \frac{1}{t^{\alpha_j - \alpha_l}},
$$
If $j\notin\M$ and $l\in\M$, we have $\alpha_l < \alpha_j$, and so $\mu_j=0$. If both $j,l\in\M$, we have $\alpha_j=\alpha_l$, and so $\mu_j=\mu_l=\mu$, only depending on $(t_k)$.

Pick any $j \in\M$. By Harnack's inequality we have
$$
\frac{p^j(t_k,z_j,z_j)}{p(t_k+2,x_0,x_0)} \leq C_H^{2}\frac{p^j(t_k,z_j,z_j)}{p(t_k,z_j,z_j)} \leq C_H^2,
$$
by monotonicity of domains. Using Lemma \ref{le:cms99}, we see that the left hand side above converges to $\mu w_j(z_j)^2$, thus,
$$
\mu \leq \frac{C_H^2}{w_j(z_j)^2} \leq \frac{C_H^2}{\inf_j w_j(z_j)^2}
$$
as desired.
\end{proof}

\begin{lemma}
\label{le:fundamental}
There is a constant $C>1$ such that, for every $M>2$, every index $1\leq j\leq \k$, $m \in\mathcal{M}$, and points $x\in\O_j,\ y\in\O$ we have for $t_k >2M+1$
\begin{align}
\label{eq:fund0M}
\limsup_k \frac{I^{x,y}_j(0,M;t_k)}{p(t_k,x_0,x_0)} &\leq C\PP_x\rpar{ B_{T^j}\in\bd_j}F(\xi_j,y),\\
\label{eq:fundtMt}
\limsup_k \frac{I^{x,y}_j(t_k-M,t_k;t_k)}{p(t_k,x_0,x_0)} &\leq C\ind_{\M}(j) G(\bd_j,y) w_j(x) w_j(z_j),\\
\label{eq:fundMtM}
\limsup_L \limsup_k \frac{I^{x,y}_j(L,t_k-L;t_k)}{p(t_k,x_0,x_0)} &\leq C \ind_{\mathcal{M}}(j) \frac{w_j(x)}{w_m(z)}F(z,y), 
\end{align}
where the last equation holds for any $z\in\O_m$. The constant $C$ depends only on the domain $\O$ and our choices of $z_j$.

\end{lemma}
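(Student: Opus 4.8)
The plan is to estimate the three pieces of the splitting of $I^{x,y}_j$ separately, using in each regime the appropriate convergence tool. For the first bound \eqref{eq:fund0M}, the time increment $t_k-u$ stays in $[t_k-M,t_k]$, so by Lemma \ref{le:cms99} (applied with $a(t)\equiv 1/p(t,x_0,x_0)$, whose ratio hypothesis \eqref{eq:cms00} follows from that same lemma) the ratio $p(t_k-u,\xi,y)/p(t_k,x_0,x_0)$ converges to $F(\xi,y)$ uniformly for $u\in[0,M]$ and $\xi\in\bd_j$. Since $\bd_j$ is compact and $\xi\mapsto F(\xi,y)$ is continuous, $F(\xi,y)\le C\,F(\xi_j,y)$ for $\xi\in\bd_j$ by Harnack's inequality (the constant $C$ absorbing the Harnack constant over the fixed compact base $\bd_j$ for each fixed $j$, hence a single $C$ over the finitely many branches). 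Dividing \eqref{eq:integral} by $p(t_k,x_0,x_0)$, pulling the sup over $u\in[0,M]$ and $\xi\in\bd_j$ out of the integral, and using $\int_0^M\!\int_{\bd_j}\PP_x(B_{T^j}\in d\xi,T^j\in du)\le \PP_x(B_{T^j}\in\bd_j)$ gives \eqref{eq:fund0M}.

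For \eqref{eq:fundtMt}, the time increment $t_k-u$ now lives in $[0,M]$ while $u$ is large, so the roles reverse: we use $\PP_x(B_{T^j}\in d\xi,T^j\in du)=\tfrac12\partial_np^j(u,x,\xi)\sigma(d\xi)du$ from \eqref{eq:partialdensity}, and control $\partial_np^j(u,x,\xi)$ for large $u$. Write $\partial_np^j(u,x,\xi)/p(t_k,x_0,x_0)$; by the Chapman--Kolmogorov identity \eqref{eq:normalderivativechapman} and the exponential Harnack bound (Corollary \ref{co:expharnackcone}) applied inside $\O_j$, together with the convergence $p^j(u,x,\cdot)/p^j(u,z_j,z_j)\to w_j(x)w_j(\cdot)/w_j(z_j)^2$ and $p^j(u,z_j,z_j)/p(t_k,x_0,x_0)\to\mu_j w_j(z_j)^2$ (here we invoke Lemma \ref{le:conetodomain}, which is why the indicator $\ind_{\M}(j)$ appears: $\mu_j=0$ off $\M$), one gets that $\limsup_k \partial_np^j(u,x,\xi)/p(t_k,x_0,x_0)$ is bounded, uniformly over $u$ in the relevant range and $\xi\in\bd_j$, by $C\,\ind_{\M}(j)\,w_j(x)w_j(z_j)$ times a bounded factor depending on $\xi$. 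Then, substituting $v=t_k-u\in[0,M]$, the remaining integral becomes $\tfrac12\int_0^M\!\int_{\bd_j}(\text{bounded})\,p(v,\xi,y)\sigma(d\xi)dv$, which is at most $G(\bd_j,y)$ up to the constant; applying Fatou's lemma (as in the proof of Theorem \ref{th:vertextruncated}, since we lack sharp asymptotics for $\partial_np^j$) yields \eqref{eq:fundtMt}.

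The middle regime \eqref{eq:fundMtM} is the main obstacle, because there both $u$ and $t_k-u$ are large and we must iterate an approximate renewal estimate. The idea is to insert, via the strong Markov property at the exit time from $\O_m$ (using \eqref{eq:strongmarkov} with $O=\O$, $U=\O_m$, and a reference point $z\in\O_m$), a factor that converts $p(t_k-u,\xi,y)$ into something comparable to $F(z,y)$: for $u\in[L,t_k-L]$ one writes $p(t_k-u,\xi,y)\le C\,(w_m\text{-comparison})\,p(t_k-u',z',z')$-type bounds and uses Lemma \ref{le:cms99} to replace the first argument freely, the price being the factor $1/w_m(z)$ and a Harnack constant. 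The outer $\limsup_L$ is needed to kill the boundary terms at scale $L$: the contribution near $u\approx L$ and $u\approx t_k-L$ is handled by the first two bounds with $M$ replaced by $L$, and the ``bulk'' contribution is shown to be controlled by $C\ind_{\M}(j)\tfrac{w_j(x)}{w_m(z)}F(z,y)$ using that $\int_L^\infty\!\int_{\bd_j}\PP_x(B_{T^j}\in d\xi,T^j\in du)\to 0$ as $L\to\infty$ while the $F$-limit absorbs the rest. I expect the bookkeeping of which exit time is used where — exiting $\O_j$ to produce $w_j(x)$, then re-entering and exiting $\O_m$ to produce $F(z,y)$ — and the uniformity of the $p$-ratio limits over the growing time windows to be the delicate points; both are ultimately consequences of Lemma \ref{le:cms99} (uniform on compacts), Corollary \ref{co:expharnackcone} (domination), and Lemma \ref{le:conetodomain} (the $\ind_{\M}$ dichotomy), assembled via Fatou's lemma.
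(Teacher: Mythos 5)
Your treatment of \eqref{eq:fund0M} and \eqref{eq:fundtMt} follows essentially the paper's route: move $\xi$ to the fixed reference point $\xi_j$ by a Harnack estimate and pass to the limit with Fatou/DCT and Lemma \ref{le:cms99}; and, for the second bound, use the normal--derivative asymptotics coming from \eqref{eq:normalderivativechapman}, Theorem \ref{th:vertextruncated} and Lemma \ref{le:conetodomain} to produce $\ind_{\M}(j)\,w_j(x)w_j(z_j)$ together with $G(\bd_j,y)$. One caveat: $\bd_j$ is \emph{not} compactly contained in $\O$ (its closure meets $\partial\O$ along the edge $a_j+\partial\d_j$), so neither ``uniform convergence for $\xi\in\bd_j$'' nor an interior Harnack constant ``over the compact base'' is available as you state them; what is needed, and what the paper uses, is the Harnack inequality up to the boundary (Theorem \ref{th:harnackboundary}) applied to the caloric function $p(\cdot,\cdot,y)$, at the cost of a unit time shift, $p(t_k-u,\xi,y)\le C_1\,p(t_k-u+1,\xi_j,y)$, which is then removed via Lemma \ref{le:cms99}. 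This is a repairable slip.

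The genuine gap is \eqref{eq:fundMtM}, the heart of the lemma, which your proposal only sketches, and with mechanisms that do not work. Lemma \ref{le:cms99} controls bounded \emph{time} shifts only; it cannot ``replace the first argument freely'', i.e.\ move the spatial argument from $\xi\in\bd_j$ to $z\in\O_m$. Likewise, the smallness of $\int_L^\infty\int_{\bd_j}\PP_x(B_{T^j}\in d\xi,T^j\in du)$ as $L\to\infty$ cannot control the bulk: for $u$ near $t_k-L$ the factor $p(t_k-u,\xi,y)$ is of order $p(L,\xi,y)$, a fixed positive quantity, while $p(t_k,x_0,x_0)\to 0$, so the ratio is unbounded in $k$; and at this stage no upper bound of the type $p(t,x_0,x_0)\lesssim t^{-(1+\alpha)}$ is yet available (that is Lemma \ref{th:limitsintexist}, proved afterwards). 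Moreover nothing in your outline actually produces the factor $\ind_{\M}(j)\,w_j(x)/w_m(z)$. The paper's device is a comparison of exit \emph{densities}, uniform over all large times: with $r_0$ such that $B_{2r_0}(\xi_m)\cap\kpar{\abs{x}=1}\sub\bd_m$, one sets
$$
C^L_{jm}(x,z)=\sup_{t>L}\frac{\int_{\bd_j}\partial_n p^j(t,x,\xi)\,\sigma(d\xi)}{\int_{\bd_m\cap B_{r_0}(\xi_m)}\partial_n p^m(t,z,\xi')\,\sigma(d\xi')},
$$
which is finite precisely because $m\in\M$ makes the denominator decay at the slowest rate, and which converges (Lemma \ref{le:normalderivative}, Theorem \ref{th:vertextruncated}, DCT) to at most $C_3\,\ind_{\M}(j)\,w_j(x)/w_m(z)$ as $L\to\infty$ --- this is where the indicator and the ratio of harmonic functions come from. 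Then one bounds $p(t_k-u,\xi,y)\le C_1p(t_k-u+1,\xi_m,y)\le C_1C_4\,p(t_k-u+2,\xi',y)$ for $\xi'\in B_{r_0}(\xi_m)$, swaps $\partial_np^j(u,x,\xi)\sigma(d\xi)$ for $C^L_{jm}(x,z)\,\partial_np^m(u,z,\xi')\sigma(d\xi')$, enlarges the integration to all of $\bd_m$ and all times, and recognizes the resulting integral, via the strong Markov identity \eqref{eq:strongmarkov} with $U=\O_m$, as at most $p(t_k+2,z,y)$; dividing by $p(t_k,x_0,x_0)$ and invoking Lemma \ref{le:cms99} yields $F(z,y)$. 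Without this comparison-and-reassembly step (or an equivalent substitute), \eqref{eq:fundMtM} is not established.
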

\begin{proof} Take $k$ large enough such that $t_k > 2M+1$.  By the boundary Harnack inequality, there exists a positive $C_1$ such that for all $u\in [0,t_k-1]$, all $y\in \O$, and all $i=1,\ldots, \k$
\begin{align}
\label{eq:harnack}
p(t_k-u,\xi,y) \leq C_1 p(t_k-u+1,\xi_i,y).
\end{align}

By Fatou's lemma and Lemma \ref{le:cms99} we get
\begin{align*}
\varlimsup_k \frac{I^{x,y}_j(0,M;t_k)}{p(t_k,x_0,x_0)} &\leq C_1 \int_0^{M}\int_{\bd_j} \PP_x(B_{T^j}\in d\xi,T^j\in du) F(\xi_j,y),
\end{align*}
from which \eqref{eq:fund0M} follows easily.

For $u\in [0,M]$,  Lemma \ref{le:normalderivative}, Theorem \eqref{th:vertextruncated}, equation \eqref{eq:normalderivativechapman} and the \dct\ yield
\begin{align}
\label{eq:tandnormal}
\lim_{k\to\infty} \frac{\partial_n p^j(t_k-u,x,\xi)}{p^j(t_k,x,z_j)} = \frac{\partial_n w_j(\xi)}{w_j(z_j)},
\end{align}
where the convergent sequence is bounded by a constant that only depends on $M$, $x$ and $z_j$ (see Lemma \ref{le:normalderivative}). It follows by the \dct, and  Harnack's inequality, that \small
\begin{align*}
\varlimsup_k \frac{I^{x,y}_j(t_k-M,t_k;t_k)}{p(t_k,x_0,x_0)} &\leq \varlimsup_k \frac{ p^j(t_k,x,z_j) }{ p(t_k,x_0,x_0) } \int_0^{M}\int_{\bd_j} \frac12 \frac{\partial_n w_j(\xi)}{w_j(z_j)} p(u,\xi,y)\sigma(d\xi)du \\
&\leq C_2 G(\bd_j,y) \varlimsup_k \frac{ p^j(t_k,x,z_j) }{ p(t_k,x_0,x_0) } ,
\end{align*}\normalsize
where $C_2 =\max\limits_{j=1.\ldots,\k} \frac{1}{ 2v_j(z_j)} \sup_{\xi\in \bd_j} {\partial_n w_j(\xi)}\sigma(\bd_j)$. This inequality and Lemma \ref{le:conetodomain} prove \eqref{eq:fundtMt}.

Recall that there is $r_0>0$ such that for all $i=1,\ldots,\k$, we have $B_{2r_0}(\xi_i)\cap\kpar{\abs{x}=1}\sub \bd_i$.  For $x\in \O_j$, $z\in\O_m$, set
$$
C^L_{jm}(x,z) = \sup_{t>L} \frac{ \int_{\bd_j} \partial_n p^j(t,x,\xi)\sigma(d\xi) }{ \int_{\bd_m\cap B_{r_0}(\xi_m)} \partial_n p^m(t,z,\xi')\sigma(d\xi') },
$$
which is finite since $m\in\M$. From Lemma \ref{le:normalderivative}, Theorem \ref{th:vertextruncated}, and the  \dct, we obtain \small
$$
\lim_{L\to\infty} C^L_{jm}(x,z) = \ind_{\mathcal{M}}(j) \frac{ w_j(x) }{w_m(z)} \frac{\int_{\bd_j} \partial_n w_j(\xi)\sigma(d\xi)}{\int_{\bd_m\cap B_{r_0}(\xi_m)} \partial_n w_m(\xi')\sigma(d\xi')} \leq C_3 \ind_{\mathcal{M}}(j) \frac{ w_j(x) }{w_m(z)} ,
$$\normalsize
for a constant $C_3>0$.

From the standard Harnack inequality we have $p(t-u+1,\xi_m,y)\leq C_4 p(t-u+2,\xi',y)$ for all $\xi'\in B_{r_0}\rpar{\xi_m}$. The previous discussion yields the following series of inequalities \small
\begin{align*}
I^{x,y}_j&(L,t_k-L;t_k) \leq C_1 \int_{L}^{t_k-L} \!\!\! {\int_{\bd_j} \!\!\! \frac{1}{2}\partial_n p^j(u,x,\xi)} p(t_k-u+1,\xi_m,y) \sigma(d\xi) du \\
&\leq C_1 C^L_{jm}(x,z)  \int_{L}^{t_k-L} {\int_{\bd_m \cap B_{r_0}(\xi_m)} \frac{1}{2}\partial_n p^m(u,z,\xi')} p(t_k-u+1,\xi_m,y) \sigma(d\xi') du \\
&\leq C_1 C_4 C^L_{jm}(x,z) \int_{L}^{t_k-L} \int_{\bd_m \cap B_{r_0}(\xi_m)}  \frac{1}{2} \partial_n p^m(u,z,\xi')p(t_k-u+2,\xi',y)\sigma(d\xi') du \\
&\leq C_1 C_4 C^L_{jm}(x,z) \int_{L}^{t_k-L} \int_{\bd_m}  \frac{1}{2} \partial_n p^m(u,z,\xi')p(t_k-u+2,\xi',y)\sigma(d\xi') du \\
&\leq C_1 C_4 C^L_{jm}(x,z) p(t_k+2,z,y).
\end{align*}\normalsize
Equation \eqref{eq:fundMtM} now follows from Lemma \ref{le:cms99}.
\end{proof}

\begin{lemma}
For the coefficients of the function $F$ defined at the beginning of this section:
\label{le:gammas}
\begin{enumerate}[(i)]
\item $\gamma_{ij}=0$ if $i\notin\M$ or $j\notin\M$.
\item There is a universal constant $C$ depending only on the domain $\O$ such that $\gamma_{ij} \leq C \gamma_{jm}$ for $i,j,m\in\M$, with $i\neq j$.
\end{enumerate}
\end{lemma}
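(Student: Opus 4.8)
The plan is to extract the coefficients $\gamma_{ij}$ from the decomposition $F(x,y)=\sum_{i,j}\gamma_{ij}u_i(x)u_j(y)$ by exploiting the asymptotic behaviour of the minimal harmonic functions: recall from \eqref{eq:limit_1} that $u_l$ is bounded on $\O\setminus\O_l$ while it grows like $w_l$ radially in $\O_l$. So the first step is to fix an index $j$, send $x$ radially to infinity inside $\O_j$ (keeping $y$ fixed), and divide by $w_j(x)$; by \eqref{eq:limit_1} only the terms with $i=j$ survive on the left, giving $\lim_{x\to\infty,\,x\in\O_j} F(x,y)/w_j(x) = \sum_{l=1}^\k \gamma_{jl}u_l(y)$. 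The idea is then to use equation \eqref{eq:strongmarkov} with $O=\O$ and $U=\O_j$, i.e. $p(t_k,x,y) = p^j(t_k,x,y) + I^{x,y}_j(0,t_k;t_k)$, divide by $p(t_k,x_0,x_0)$, split the integral as $I^{x,y}_j(0,M)+I^{x,y}_j(M,t_k-L)+I^{x,y}_j(t_k-L,t_k)$, apply the three estimates of Lemma \ref{le:fundamental}, and let $k\to\infty$, then $L\to\infty$, then finally send $x\to\infty$ radially in $\O_j$ and divide by $w_j(x)$.

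For part (i): if $j\notin\M$, then $p^j(t_k,x,y)/p(t_k,x_0,x_0)\to\mu_j w_j(x)w_j(y)=0$ by Lemma \ref{le:conetodomain}, and each of the three pieces of $I^{x,y}_j$ from Lemma \ref{le:fundamental} carries a factor $\ind_\M(j)$ or, in the case of \eqref{eq:fund0M}, is controlled by $\PP_x(B_{T^j}\in\bd_j)F(\xi_j,y)$ which is bounded in $x$; dividing by $w_j(x)$ and letting $x\to\infty$ radially in $\O_j$ kills everything, forcing $\sum_l\gamma_{jl}u_l(y)=0$, hence $\gamma_{jl}=0$ for all $l$ by the linear independence implicit in Lemma \ref{le:extensions} (uniqueness of the representation). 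By symmetry of $F$, $\gamma_{ij}=0$ whenever $i\notin\M$ or $j\notin\M$. For part (ii): now take $i,j,m\in\M$ with $i\neq j$. From the same decomposition, after dividing by $p(t_k,x_0,x_0)$, letting $k\to\infty$ and then $L\to\infty$, the surviving upper bound is of the form $\mu_j w_j(x)w_j(y) + C\,\PP_x(B_{T^j}\in\bd_j)F(\xi_j,y) + C\,\ind_\M(j)G(\bd_j,y)w_j(x)w_j(z_j) + C\,\frac{w_j(x)}{w_m(z)}F(z,y)$ for any $z\in\O_m$. Dividing by $w_j(x)$ and sending $x\to\infty$ radially in $\O_j$, the terms $\mu_j w_j(y)$ and $C G(\bd_j,y)w_j(z_j)$ are bounded in $y$ (in fact $u_j$ is bounded on $\O\setminus\O_j$, and $G(\bd_j,\cdot)$ is bounded away from $\bd_j$), while the $F(\xi_j,y)$ term is also bounded in $y$; so we obtain $\sum_l \gamma_{jl}u_l(y) \le \text{(bounded in } y) + C\,w_m(z)^{-1}F(z,y)$. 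Now divide by $u_j(y)$ (equivalently by $w_j(y)$ up to the asymptotic \eqref{eq:limit_1}) and let $y\to\infty$ radially in $\O_i$ with $i\neq j$: since $u_j$ is bounded on $\O_i$ while $u_i$ grows like $w_i(y)$, the left side contributes $\gamma_{ji}\cdot\infty$ unless... — more carefully, one fixes $y\in\O_i$, divides by $u_i(y)$ and sends $y\to\infty$ radially in $\O_i$: the left side tends to $\gamma_{ji}$, the bounded terms die, and $F(z,y)/u_i(y)\to \sum_l\gamma_{li}u_l(z)/1$ (again using \eqref{eq:limit_1}) which evaluated at the fixed $z\in\O_m$ is bounded by $\sum_l\gamma_{li}u_l(z)$; choosing $z$ conveniently and using part (i) (so only $l\in\M$ contribute) and the already-established symmetry, this yields $\gamma_{ji}\le C' w_m(z)^{-1}\gamma_{jm}u_j(z)$-type bound — tuning constants gives $\gamma_{ij}\le C\gamma_{jm}$.

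Concretely I would organize the proof as: (1) write down $p = p^j + I^{x,y}_j(0,t_k)$ via \eqref{eq:strongmarkov}, normalize by $p(t_k,x_0,x_0)$; (2) split $I^{x,y}_j$ into the three ranges and invoke Lemma \ref{le:fundamental} to bound $\limsup_L\limsup_k$; (3) combine with $F_j(x,y)=\mu_j w_j(x)w_j(y)$ to get an inequality $F(x,y)\le \mu_j w_j(x)w_j(y) + C\big[\PP_x(B_{T^j}\in\bd_j)F(\xi_j,y) + \ind_\M(j)G(\bd_j,y)w_j(x)w_j(z_j) + \ind_\M(j)\tfrac{w_j(x)}{w_m(z)}F(z,y)\big]$; (4) for (i), specialize to $j\notin\M$, divide by $w_j(x)$, send $x\to\infty$ radially in $\O_j$, use \eqref{eq:limit_1} and boundedness of $u_l$, $l\neq j$, on $\O_j$ together with uniqueness in Lemma \ref{le:extensions} to conclude all $\gamma_{jl}=0$, then symmetrize; (5) for (ii), with $i,j,m\in\M$, $i\neq j$, divide by $w_j(x)$, send $x\to\infty$ in $\O_j$, then divide by $u_i(y)$ and send $y\to\infty$ radially in $\O_i$, tracking constants. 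The main obstacle I anticipate is step (5): carefully isolating $\gamma_{ij}$ by the right sequence of radial limits without the bounded error terms or the $F(z,y)$ term (which still contains unknown $\gamma$'s) contaminating the estimate — one has to make sure that after dividing by $u_i(y)$ the term $F(z,y)/u_i(y)$ converges to something controlled by the $\gamma$'s already known to possibly be nonzero (those with both indices in $\M$), and that the chain of Harnack constants collapses into a single domain-dependent $C$; getting the constant $C$ to be genuinely independent of $(t_k)$ relies crucially on the uniformity in Lemma \ref{le:conetodomain} and Lemma \ref{le:fundamental}.
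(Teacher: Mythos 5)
Your proposal is correct and follows essentially the same route as the paper: decompose $p$ via \eqref{eq:strongmarkov} at the branch containing $x$, normalize by $p(t_k,x_0,x_0)$, apply Lemmas \ref{le:fundamental} and \ref{le:conetodomain}, and then take iterated radial limits in $x$, $y$ and $z$ (dividing by the appropriate minimal harmonic functions) to isolate the coefficients, with symmetry of $F$ giving part (i) and the $z$-limit in $\O_m$ giving $\gamma_{ij}\leq C\gamma_{jm}$. One small correction to your justification in part (ii): $F(\xi_j,y)$ is not bounded in $y$ in general, but this does not matter because that term appears divided by $w_j(x)$ and hence vanishes already when you send $x\to\infty$ radially in $\O_j$ at fixed $y$, which is how your displayed inequality (and the paper's argument) actually uses it.
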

\begin{proof}
Let $x\in\O_i$ and $y\in\O_j$. By \eqref{eq:strongmarkov},
\begin{align*}
p(t,x,y) = p^i(t,x,y)\delta_{ij} + I^{x,y}_i(0,t;t).
\end{align*}
From Lemmas \ref{le:fundamental} and \ref{le:conetodomain} we obtain for $m\in\M,\ z\in\O_m$,
{\footnotesize
\begin{align*}
F(x,y) &\leq \delta_{ij}\mu_iw_i(x)w_i(y) + C\rpar{F(\xi_i,y) + G(\bd_i,y)\ind_{\M}(i)w_i(x)w_i(z_i) + \ind_{\M}(i) \frac{w_i(x)}{w_m(z)} F(z,y)  } \\
&\leq C\ind_{\M}(i)w_i(x) \rpar{\delta_{ij}w_j(y)  + G(\bd_i,y) w_i(z_i) + \frac{F(z,y) }{w_m(z)}} +CF(\xi_i,y).
\end{align*}
}

The use of this inequality is twofold. First, if $i\notin \M$, by taking $x$ radially to infinity in $\O_i$ we find that $\gamma_{ij}u_i(x)u_j(y)\leq C F(\xi_i,y)$ is only possible if $\gamma_{ij}=0$. By symmetry of the kernel we conclude $(i)$. 


Secondly, consider  $i,j,m\in\M$, with $i\neq j$ and $z\in\O_m$. Dividing the inequality by $w_i(x)w_j(y)$, and taking $x,y,z\to\infty$ radially in their respective branches,  we obtain that $\gamma_{ij} \leq C \gamma_{mj}$, as desired.
\end{proof}

\begin{remark} 
\label{re:gammastar}
Set $\gamma^* =\max_{i\in\M} \gamma_{ii}$ and fix $m\in\M$ such that $\gamma^*=\gamma_{mm}$. The previous lemma states that $\gamma_{ij}\leq C\gamma^*$ for all $i,j\in\M$. Also, notice that 
$$
\gamma_{mm}u_m(x)u_m(y)\leq F(x,y) \leq (1+C)\gamma_{mm}\sum_{i,j\in\M} u_i(x)u_j(y).
$$
Since $F(x_0,x_0)=1$, we obtain that $\gamma^*$ is bounded below by a constant that is independent of the sequence $(t_k)$.

It follows that 
\begin{align*}
\lim_k \frac{p(t_k,x,y)}{p(t_k,z,z')} &= \frac{F(x,y)}{F(z,z')} \leq (1+C) \sum_{i,j\in\M} \frac{u_i(x)u_j(y)}{u_m(z)u_m(z')}
\end{align*}
where the constant $C$ comes from Lemma \ref{le:fundamental}. In particular, if $x\in\O_m$, the inequality\small
\begin{align}
\limsup_{t\to\infty} \frac{p(t,x,\xi_m)}{p(t,x,x)} &\leq \frac{1+C}{u_m(x)} \sum_{j\in\M} u_j(\xi_m)  \rpar{1+ \frac{1}{u_m(x)} \sup_{z\in \O_m}\sum_{i\neq m} u_i(z) }
\end{align}\normalsize
holds. 
If we fix $\widehat{x}\in \d_m$, and let $r>0$ be suficiently large,  for $x=a_m+r\widehat{x}$ this inequality implies that  
\begin{align}
\label{eq:choiceofx}
\limsup_{t\to\infty} \frac{p(t,x,x_0)}{p(t,x,x)} &\leq \frac{C_5}{u_m(x)},
\end{align}
where $C_5>0$ is independent of $r$.
\end{remark}

\begin{lemma}
\label{th:limitsintexist}
The following inequalities hold
$$
0<\liminf t^{1+\alpha} p(t,x,y),\quad \limsup t^{1+\alpha} p(t,x,y)<\infty.
$$
\end{lemma}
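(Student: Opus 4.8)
The plan is to bootstrap from the truncated-cone asymptotics (Theorem~\ref{th:vertextruncated}) together with the subsequential-limit machinery already set up at the start of Section~\ref{se:multicone}. For the \emph{lower} bound, I would argue as follows. Fix $x,y\in\O$; by Lemma~\ref{le:cms99} the ratio $p(t+s,x,y)/p(t,x,y)\to 1$, so it suffices to bound $\liminf_{t\to\infty}t^{1+\alpha}p(t,x,y)$ along the sequence $(t_k)$ chosen earlier. Pick a maximal index $m\in\M$ and a point $z\in\O_m$. Using the monotonicity of domains $p^m(t,z,z)\le p(t,z,z)$ and Theorem~\ref{th:vertextruncated} applied to the truncated cone $\O_m$ (whose character is $\alpha_m=\alpha$, since $m$ is maximal), we get
\begin{align*}
\liminf_{t\to\infty} t^{1+\alpha} p(t,z,z) \;\ge\; \lim_{t\to\infty} t^{1+\alpha} p^m(t,z,z) \;=\; \frac{w_m(z)^2}{2^\alpha\Gamma(1+\alpha)} \;>\;0.
\end{align*}
To transfer this from the single point $z$ to arbitrary $x,y$, I would use Harnack's inequality up to the boundary (Theorem~\ref{th:harnackboundary}) in the connected domain $\O$: there is a constant depending on $x$, $y$, $z$ such that $p(t,z,z)\le C\, p(t+c,x,y)$, and then Lemma~\ref{le:cms99} removes the time shift. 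Hence $\liminf_{t\to\infty}t^{1+\alpha}p(t,x,y)>0$.

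For the \emph{upper} bound I would exploit the estimates assembled in Lemma~\ref{le:fundamental} and Remark~\ref{re:gammastar}. The key structural fact is that, along the subsequence $(t_k)$, $p(t_k,x,y)/p(t_k,x_0,x_0)\to F(x,y)$ with $F(x_0,x_0)=1$ and $F$ a finite combination $\sum_{i,j\in\M}\gamma_{ij}u_i u_j$ whose coefficients are uniformly bounded (this uniformity in the choice of subsequence is exactly what Remark~\ref{re:gammastar} delivers: $\gamma^*=\max_{i\in\M}\gamma_{ii}$ is bounded above and below by constants independent of $(t_k)$). Thus it is enough to show $\limsup_{t\to\infty}t^{1+\alpha}p(t,x_0,x_0)<\infty$ for the fixed reference point $x_0$. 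For this I would take $x_0=a_m+r\widehat x$ with $\widehat x\in\d_m$ and $r$ large, as in~\eqref{eq:choiceofx} — so that $x_0$ lies deep inside the maximal branch $\O_m$ — and compare with the heat kernel of the truncated cone: by~\eqref{eq:strongmarkov} with $U=\O_m$, $p(t,x_0,x_0)=p^m(t,x_0,x_0)+I^{x_0,x_0}_m(0,t;t)$, where the first term is $O(t^{-(1+\alpha)})$ by Theorem~\ref{th:vertextruncated} and the crossing term $I^{x_0,x_0}_m(0,t;t)$ is controlled, after splitting $[0,t]$ into $[0,M]$, $[M,t-M]$, $[t-M,t]$, by the three bounds~\eqref{eq:fund0M}--\eqref{eq:fundMtM} of Lemma~\ref{le:fundamental} (each of which is $O(p(t,x_0,x_0))$ with an explicit small or bounded prefactor). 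Choosing $M$ and then $r$ large enough makes the total crossing-term contribution a fraction strictly less than $1$ of $p(t,x_0,x_0)$ in the $\limsup$, which, combined with the $O(t^{-(1+\alpha)})$ term, forces $\limsup_{t\to\infty}t^{1+\alpha}p(t,x_0,x_0)<\infty$; an alternative and perhaps cleaner route is to use~\eqref{eq:choiceofx} directly, writing $p(t,x_0,x_0)$ in terms of $p^m(t,\cdot,\cdot)$ and the boundary term, and absorbing the latter using that $G^V(\bd_m,\cdot)$ is bounded away from the base.

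\textbf{Main obstacle.} The delicate point is the self-referential nature of the upper bound: the crossing term $I^{x_0,x_0}_m(0,t;t)$ is itself bounded by (a small multiple of) $p(t,x_0,x_0)$ rather than something manifestly smaller, so one cannot simply iterate. The resolution is that the multiplicative constant in front can be made $<1$ by choosing $x_0$ far out in $\O_m$ (large $r$) — this is precisely the content of~\eqref{eq:choiceofx}, where the prefactor is $C_5/u_m(x_0)$ and $u_m(x_0)\to\infty$ as $r\to\infty$ — so that the inequality $\limsup t^{1+\alpha}p(t,x_0,x_0)\le O(1)+\varepsilon\cdot\limsup t^{1+\alpha}p(t,x_0,x_0)$ with $\varepsilon<1$ can be rearranged. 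Making this rearrangement legitimate requires knowing a priori that $\limsup t^{1+\alpha}p(t,x_0,x_0)<\infty$ or at least handling the case where it might be infinite; I would deal with this by first establishing the $\limsup$ is finite \emph{along the subsequence} $(t_k)$ using $F$ (which is automatically finite), then upgrading to the full limsup via Lemma~\ref{le:cms99} and the fact that, by Lemma~\ref{le:cms00}, $t^{1+\alpha}p(t,\cdot,\cdot)$ is equicontinuous on compacts so that finiteness of the limsup is a genuine boundedness statement rather than merely a subsequential one. The lower bound, by contrast, is essentially immediate from domain monotonicity and Theorem~\ref{th:vertextruncated}.
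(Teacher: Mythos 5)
Your lower bound is exactly the paper's argument (domain monotonicity plus Theorem \ref{th:vertextruncated} applied to $\O_m$, then Harnack to move to general $x,y$), and your reduction of the upper bound to the single point $x_0=a_m+r\widehat{x}$ deep in a maximal branch is also the right first move. But the core of your upper-bound argument has a genuine gap, and it is precisely at the point you flag as the "main obstacle". First, the claim that you can "establish the limsup is finite along the subsequence $(t_k)$ using $F$" is unfounded: $F$ is defined as the limit of the \emph{ratios} $p(t_k,x,y)/p(t_k,x_0,x_0)$, so it is scale-free — $F(x_0,x_0)=1$ holds by construction whether or not $t_k^{1+\alpha}p(t_k,x_0,x_0)$ blows up — and Lemma \ref{le:fundamental} and Remark \ref{re:gammastar} likewise only control ratios. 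Invoking Lemma \ref{le:cms00} to "upgrade" is circular, since its hypothesis is exactly the boundedness of $a(t)p(t,\cdot,\cdot)$ on compacts that you are trying to prove. Second, the quantitative claim that "choosing $M$ and then $r$ large enough makes the total crossing-term contribution a fraction strictly less than $1$" fails for the middle range: the bound \eqref{eq:fundMtM} with $j=m$, $x=y=x_0$ gives a prefactor $C\,w_m(x_0)F(z,x_0)/w_m(z)$, and since $F(x_0,x_0)=1$ forces $\gamma_{mm}\approx u_m(x_0)^{-2}$, one has $F(z,x_0)\approx u_m(z)/u_m(x_0)$, so this prefactor is essentially the fixed constant $C>1$ of Lemma \ref{le:fundamental} and does not shrink as $r\to\infty$. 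So the inequality you would rearrange is not of the form $\limsup\le O(1)+\varepsilon\limsup$ with $\varepsilon<1$, and even if it were, the rearrangement needs the a priori finiteness you have not established.

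The paper sidesteps both problems with a doubling iteration rather than a rearrangement at fixed $t$. Using \eqref{eq:choiceofx} it first fixes $x=a_m+r\widehat{x}$ with $r$ so large that $p(t,\xi_m,x)\le \tfrac{\theta}{2^{1+\alpha}C_H}\,p(t,x,x)$ for all large $t$, with $\theta<1$ (this is a ratio statement, so it is legitimately available at this stage). Then, in $p(t,x,x)=p^m(t,x,x)+I^{x,x}_m(0,t;t)$, the portion $u\in[0,t/2]$ of the crossing term is bounded, via Harnack, this small-ratio estimate, and the monotonicity of $t\mapsto p(t,x,x)$, by $\tfrac{\theta}{2^{1+\alpha}}p(t/2,x,x)$ — crucially at time $t/2$, not $t$ — while the portion $u\in[t/2,t]$ is rewritten through $\partial_n p^m$ at times $\ge t/2$ and shown by \eqref{eq:tandnormal}, Theorem \ref{th:vertextruncated} and dominated convergence to contribute only a finite constant after multiplying by $t^{1+\alpha}$. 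This yields $\varphi(t)\le C_3+\theta\varphi(t/2)$ for $\varphi(t)=t^{1+\alpha}p(t,x,x)$, which iterates down to a fixed compact time window where $\varphi$ is finite by continuity; no a priori finiteness of the limsup and no subsequence bookkeeping are needed. If you replace your Lemma-\ref{le:fundamental}-based estimate and the $\limsup$ rearrangement by this $t\mapsto t/2$ self-improvement, the rest of your outline goes through.
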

\begin{proof}
\newcommand{\tha}{\theta}
The first inequality is direct from monotonicity of domains and Theorem \ref{th:vertextruncated} applied to $\O_m\sub\O$.

For the second one, notice that by Harnack's inequaliy, it suffices to prove the theorem for $x=y\in\O_m$. We start by setting some constants that will be relevant to our estimates:  fix $\widehat{x}\in \d_m$, and consider $x=a_m+r\widehat{x}$. Then, the Harnack constant
$$
C_H = \sup_{s>1,\xi\in \bd_m} \frac{p(s,\xi,r\widehat{x})}{p(s+1,\xi_m,r\widehat{x})},
$$
 is independent of $r>1$. 
 
Fix $0<\theta<1$. In view of \eqref{eq:choiceofx}, we can find $x\in\O_m$ such that 
$$
 C_H 2^{1+\alpha} \limsup_{t\to \infty} \frac{ p(t,\xi_m, x) }{ p(t,x, x)} \leq C_H 2^{1+\alpha} \frac{C_5}{u_m(x)} = \frac{\theta}2.
$$
We fix such an $x=a_m+r\widehat{x}$ from now on. It follows that for large enough $t_0$, the inequality
\begin{align}
\label{eq:theta}
\frac{p(t,\xi_m,x)}{p(t,x,x)} \leq \frac{\theta}{2^{1+\alpha}C_H},\qquad \forall\ t>t_0
\end{align}
holds.


Next we get estimates using technics somewhat similar to the ones we have used in Lemma \ref{le:fundamental}. Let $t>2t_0$. By Harnack's inequality, and since $t\mapsto p(t,x,x)$ is decreasing
\begin{align*}
I^{x,x}_m(0,t/2;t) & \leq C_H \int_0^{t/2} \PP_x(B_{T^m}\in \bd_m,T^m\in du) p(t-u+1,\xi_m,x) du \\
&\leq \frac{\theta}{2^{1+\alpha}}  \PP_x(B_{T^m}\in \bd_m, T^m<t/2) p(t/2,x,x) \\
&\leq \frac{\theta}{2^{1+\alpha}}  p(t/2,x,x).
\end{align*}
It follows that for all $t>2t_0$,
$$
t^{1+\alpha} I^{x,x}_m(0,t/2;t) \leq \theta \rpar{\frac{t}2}^{1+\alpha} p\rpar{t/2,x,x}.
$$
On the other hand,
\begin{align*}
t^{1+\alpha}I^{x,x}_m(t/2,t;t) & \leq \int_0^{t/2}\int_{\bd_m} \frac{t^{1+\alpha}}{2} \partial_n p^m(t-u,x,\xi) p(u,\xi,x) \sigma(d\xi)du.
\end{align*}
The right hand side converges by \eqref{eq:tandnormal}, Theorem \ref{th:vertextruncated}, and the \dct,  to
\begin{align*}
C_2 = \frac{w_m(x)}{2^{1+\alpha}\Gamma(1+\alpha)} \int_0^\infty\int_{\bd_m} {\partial_n w_m(\xi)}{} p(u,\xi,x) \sigma(d\xi)du &<\infty.
\end{align*}

Putting together these two estimates, we have that, for the continuous function $\varphi(t)=t^{1+\alpha}p(t,x,x)$, $t\geq 2t_0$,  it holds that
$$
\varphi(t) \leq C_3 + \theta\varphi(t/2),\qquad t>2t_0,
$$
where $C_3 = C_2 + \frac{w_m(x)^2}{2^\alpha\Gamma(1+\alpha)}$.
By iteration of the inequality above, it is easy to deduce that, if $t/2^N\in [2 t_0,4t_0]$ then
$$
\varphi(t) \leq C_3\sum_{k=0}^{N-1}\theta^k + \theta^N \varphi(t/2^N)\leq \frac{C_3}{1-\theta} + \sup_{s\in [2t_0,4t_0]}\varphi(s),
$$
which finishes the proof.
\end{proof}

\subsection{Proof of Theorem \ref{th:mainkernel}}
The proof is reminiscent of the one we gave for Theorem \ref{th:vertextruncated}. For fixed $x,y\in\O$, Lemma \ref{th:limitsintexist} ensures that $t^{1+\alpha}p(t,x,y)$ is bounded. Harnack's inequality then ensures that the same holds for $x,y$ in any compact set of $\O$. This shows that Lemmas \ref{le:cms00} and  \ref{le:limitharmonic} apply. Let $(t_k)$ be a sequence such that $t_k^{1+\alpha}p(t_k,\cdot,\cdot)$ converges uniformly on compact sets. The limit then has the form $H(x,y)=\sum_{i,j=1}^\k \eta_{ij}u_i(x)u_j(y)$, where $\eta_{ij}$ might depend on the sequence $(t_k)$.\\

Let $x\in\O_m$, and $y\in\O$. Set
\begin{align*}
I_1(t) &= \frac12 \int_0^{t/2}\int_{\bd_m} \partial_n p^m(s,x,\xi) p(t-s,\xi,y) \sigma(d\xi) ds, \\
I_2(t) &= \frac12 \int_{t/2}^t\int_{\bd_m} \partial_n p^m(s,x,\xi)  p(t-s,\xi,y) \sigma(d\xi) ds.
\end{align*}
An application of the \dct, as in the proof of Lemma \ref{th:limitsintexist}, yields that
\begin{align*}
\lim_{k\to\infty} t_k^{1+\alpha}I_1(t_k) &= \frac12 \int_0^{\infty}\int_{\bd_m} \partial_n p^m(s,x,\xi) H(\xi,y) \sigma(d\xi) ds \\
&= \EE_x\rpar{H\rpar{B_{T^m},y}}.
\end{align*}
On the other hand, by using equations \eqref{eq:harmonic_extension} and \eqref{eq:harmonic_extension_2} 
\begin{align*}
\lim_{k\to\infty} t_k^{1+\alpha}I_2(t_k) &= \ind_\M(m)\frac12 \int_0^{\infty} \int_{\bd_m} w_m(x)\partial_n w_m(\xi)  p(t-s,\xi,y) \sigma(d\xi) ds \\
&= \ind_\M(m)\frac{w_m(x)\rpar{u_m(y)-w_m(y)}}{2^\alpha \Gamma(1+\alpha)}.
\end{align*}

Using these two estimates, and \eqref{eq:strongmarkov}, we arrive to
\begin{align*}
H(x,y) = \EE_x\rpar{H\rpar{B_{T^m},y}} + \ind_\M(m)\frac{w_m(x)u_m(y)}{2^\alpha \Gamma(1+\alpha)},\qquad x\in\O_m, y\in\O.
\end{align*}
Recall that $\EE_x\rpar{H\rpar{B_{T^m},y}}$ is bounded as function of $x$. By taking the limit of $H(x,y)/u_m(x)$, with $x$ going radially to infinity, we find that 
$$
\sum_{j=1}^\k \eta_{mj}u_j(y) = \ind_\M(m) \frac{u_m(y)}{2^\alpha \Gamma(1+\alpha)},
$$
By uniqueness of the decomposition \eqref{eq:uniqueharmonic}, we find that the only nonzero coefficients are $\gamma_{mm} = \frac{1}{2^\alpha \Gamma(1+\alpha)}$ for $m\in\M$. This shows \eqref{eq:mainkernel}. Uniform convergence on compact sets is direct from Lemma \ref{le:cms00}.
\qed

The following corollary is a direct consecuence of the previous theorem.

\begin{corollary}
\label{co:ratiokernel}
Let $\O$ be a mutlticone domain, with maximal index set $\M$. Then,
\begin{align}
\label{eq:limit}
\lim_{t\to\infty} \frac{ p(t,x,y) }{p(t,w,z)} = \frac{\sum_{j\in\M} u_j(x)u_j(y)}{\sum_{j\in\M} u_j(w)u_j(z)}.
\end{align}
The convergence is uniform in compact sets.
\end{corollary}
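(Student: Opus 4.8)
The plan is to deduce Corollary \ref{co:ratiokernel} directly from Theorem \ref{th:mainkernel}. The key observation is that Theorem \ref{th:mainkernel} gives, for every pair of points in $\O$, the \emph{same} normalizing rate $t^{1+\alpha}$ and an explicit nonzero limit, so the ratio $p(t,x,y)/p(t,w,z)$ is a quotient of two quantities each of which, after multiplication by $t^{1+\alpha}$, converges to a strictly positive limit.

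Concretely, first I would fix arbitrary $x,y,w,z\in\O$ and write
\begin{align*}
\frac{p(t,x,y)}{p(t,w,z)} = \frac{t^{1+\alpha}p(t,x,y)}{t^{1+\alpha}p(t,w,z)}.
\end{align*}
By Theorem \ref{th:mainkernel}, the numerator converges to $\frac{1}{2^\alpha\Gamma(1+\alpha)}\sum_{l\in\M}u_l(x)u_l(y)$ and the denominator to $\frac{1}{2^\alpha\Gamma(1+\alpha)}\sum_{l\in\M}u_l(w)u_l(z)$, both as $t\to\infty$. The denominator limit is strictly positive: each $u_l$ is a minimal harmonic function in $\O$, hence strictly positive everywhere in $\O$ (it is positive on the branch $\O_l$ and extended harmonically with a positive Green-potential term as in \eqref{eq:harmonic_extension}), and $\M\neq\emptyset$ since $\alpha=\min_j\alpha_j$ is attained. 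Therefore the quotient of limits is well defined, the constant $2^\alpha\Gamma(1+\alpha)$ cancels, and we obtain \eqref{eq:limit}.

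For the uniform convergence on compact sets, I would argue as follows. Let $K\subseteq\O\times\O$ be compact. Theorem \ref{th:mainkernel} already asserts uniform convergence of $t^{1+\alpha}p(t,\cdot,\cdot)$ on $K$ to the continuous function $g(x,y)=\frac{1}{2^\alpha\Gamma(1+\alpha)}\sum_{l\in\M}u_l(x)u_l(y)$, and the same on the compact set containing the pairs $(w,z)$. Since $g$ is continuous and strictly positive, it is bounded below by some $\delta>0$ on any fixed compact set; hence for $t$ large the denominator $t^{1+\alpha}p(t,w,z)$ is bounded below by $\delta/2$ uniformly, and the elementary inequality $\left|\frac{a_t}{b_t}-\frac{a}{b}\right|\leq \frac{|a_t-a|}{|b_t|}+\frac{|a|\,|b_t-b|}{|b_t|\,|b|}$ converts the two uniform convergences into uniform convergence of the ratio. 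There is essentially no obstacle here: the only point requiring care is confirming strict positivity of $g$ (equivalently of each $u_l$) so that division is legitimate and the lower bound exists, and noting that $\M$ is nonempty; both are immediate from the setup in the Introduction. So the corollary is genuinely a short consequence of Theorem \ref{th:mainkernel}, with no new estimates needed.
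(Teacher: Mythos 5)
Your proof is correct and matches the paper's approach: the paper simply notes that the corollary is a direct consequence of Theorem \ref{th:mainkernel}, and your division by $t^{1+\alpha}$ together with the strict positivity of the $u_l$ is exactly the intended argument, with the uniform-convergence bookkeeping carried out properly.
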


\section{Asymptotics for the exit time}
\label{se:exit}

The following result is taken form \cite{BaS97}.

\begin{theorem}
\label{th:smits}
Let $V\sub\RR^n$ be a cone with vertex $0$ and opening $\d$. Assume that $\d$ is regular por the Laplace-Beltrami operator on $\SS^{n-1}$, and let $\alpha$ be the character of $\d$. Set $\kappa=1+\alpha-n/2$, and let $T^V$ be the Brownian exit time from $V$. Then, for each $x\in V$,
\begin{align}
\label{eq:smits01}
\lim_{t\to\infty} t^{\kappa/2}\PP_x(T^V>t) = \gamma_V v(x).
\end{align}
Here $v(x)=\abs{x}^{\kappa} m^1(x/\abs{x})$ is the harmonic function defined in \eqref{eq:coneharmonic}, where $m^1$ is the only non-negative eigenfunction of the Laplace-Beltrami operator on $\d$ with Dirichlet boundary conditions. Also,
$$
\gamma_V = \frac{\Gamma \rpar{\frac{\kappa+n}2}}{{2}^{\kappa/2}\Gamma\rpar{\kappa + \frac{n}2}}\int_\d m^1(\theta) \sigma(d\theta).
$$
\end{theorem}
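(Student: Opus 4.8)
The plan is to compute $\PP_x(T^V>t)=\int_V p^V(t,x,y)\,dy$ directly from the spectral representation of Theorem \ref{th:hkvertex}, integrate out the angular variable, rescale the radial variable by $\sqrt t$, and then pass to the limit by the \dct; the only surviving contribution will be that of the ground state $m^1$.

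\emph{Reduction to a one-dimensional integral.} Writing $x=r\theta$, $y=\rho\omega$ with $\theta,\omega\in\d$ and $dy=\rho^{n-1}\,d\rho\,\sigma(d\omega)$, and integrating the formula of Theorem \ref{th:hkvertex} first over $\omega\in\d$, the angular integrals produce the coefficients $c_i=\int_\d m^i(\omega)\,\sigma(d\omega)$ (the exchange of $\sigma$-integration and summation is legitimate by the locally uniform convergence of the series, the estimate that also underlies Corollary \ref{co:vertextimelimit}). Since $\rho^{n-1}/(r\rho)^{n/2-1}=\rho^{n/2}/r^{n/2-1}$, this gives
\begin{align*}
\PP_x(T^V>t)=\frac{1}{t\,r^{n/2-1}}\int_0^\infty e^{-\frac{r^2+\rho^2}{2t}}\,\rho^{n/2}\sum_{i\ge 1}J_{\alpha^i}\!\rpar{\tfrac{r\rho}{t}}\,m^i(\theta)\,c_i\,d\rho .
\end{align*}
Next I would substitute $\rho=\sqrt t\,s$: the exponential becomes $e^{-r^2/2t}e^{-s^2/2}$, the Bessel argument becomes $rs/\sqrt t\to0$, and an overall factor $t^{n/4-1/2}$ appears. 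Multiplying by $t^{\kappa/2}$ and using $\kappa/2=\tfrac12+\tfrac\alpha2-\tfrac n4$ leaves exactly a factor $t^{\alpha/2}$ in front:
\begin{align*}
t^{\kappa/2}\PP_x(T^V>t)=\frac{e^{-r^2/2t}\,t^{\alpha/2}}{r^{n/2-1}}\int_0^\infty e^{-s^2/2}\,s^{n/2}\sum_{i\ge 1}J_{\alpha^i}\!\rpar{\tfrac{rs}{\sqrt t}}\,m^i(\theta)\,c_i\,ds .
\end{align*}

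\emph{Passing to the limit and computing the constant.} By the two-sided bound \eqref{eq:besselbound}, $t^{\alpha/2}J_{\alpha^i}(rs/\sqrt t)=t^{(\alpha-\alpha^i)/2}\bigl(t^{\alpha^i/2}J_{\alpha^i}(rs/\sqrt t)\bigr)$ tends to $0$ for every $i\ge2$ (since $\alpha^i>\alpha^1=\alpha$) and to $(rs)^\alpha/(2^\alpha\Gamma(1+\alpha))$ for $i=1$, while $e^{-r^2/2t}\to1$. Granting the domination discussed below, the \dct\ then gives
\begin{align*}
\lim_{t\to\infty}t^{\kappa/2}\PP_x(T^V>t) &= \frac{r^{\alpha-n/2+1}m^1(\theta)\,c_1}{2^\alpha\Gamma(1+\alpha)}\int_0^\infty e^{-s^2/2}\,s^{n/2+\alpha}\,ds \\
&= \frac{v(x)\,c_1}{2^\alpha\Gamma(1+\alpha)}\int_0^\infty e^{-s^2/2}\,s^{n/2+\alpha}\,ds ,
\end{align*}
using $r^{\alpha-n/2+1}=r^\kappa$ and $v(x)=r^\kappa m^1(\theta)$ from \eqref{eq:coneharmonic}. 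The Gaussian moment is evaluated with $u=s^2/2$: it equals $2^{(n/2+\alpha-1)/2}\,\Gamma\!\rpar{\tfrac{n/2+\alpha+1}{2}}=2^{(n/2+\alpha-1)/2}\,\Gamma\!\rpar{\tfrac{\kappa+n}{2}}$. Inserting this and using the elementary identities $\kappa+n/2=1+\alpha$ (whence $\Gamma(1+\alpha)=\Gamma(\kappa+n/2)$) and $(n/2+\alpha-1)/2-\alpha=-\kappa/2$ collapses the prefactor to $\Gamma\rpar{(\kappa+n)/2}\big/\bigl(2^{\kappa/2}\Gamma(\kappa+n/2)\bigr)$; since $c_1=\int_\d m^1(\theta)\,\sigma(d\theta)$, the limit is $\gamma_V v(x)$ exactly as stated.

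\emph{The main obstacle.} The technical heart is the domination needed for the \dct: for fixed $x=r\theta$ and all $t\ge1$ one must bound $e^{-s^2/2}s^{n/2}\sum_i t^{\alpha/2}\abs{J_{\alpha^i}(rs/\sqrt t)}\,\abs{m^i(\theta)}\,\abs{c_i}$ by a single $L^1(ds)$ function. The upper bound in \eqref{eq:besselbound} gives $t^{\alpha/2}\abs{J_{\alpha^i}(rs/\sqrt t)}\le (rs)^{\alpha^i}e^{rs}\big/\bigl(2^{\alpha^i}\Gamma(1+\alpha^i)\bigr)$ for all $t\ge1$ (the power $t^{(\alpha-\alpha^i)/2}\le1$); combined with $\abs{c_i}\le\sigma(\d)^{1/2}$ (orthonormality) and the standard elliptic estimate that $\norm{m^i}_\infty$ grows at most polynomially in $\alpha^i$, the series in $i$ sums, for each fixed $s$, to a quantity growing at most exponentially in $rs$, which the factor $e^{-s^2/2}$ renders integrable on $(0,\infty)$; this same estimate also legitimizes the interchange of sum and angular integration used earlier. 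All of this is carried out in \cite{BaS97}, so in the body of the paper we merely quote the statement. (An alternative, softer route would first establish $t^{\kappa/2}\PP_x(T^V>t)\to c\,v(x)$ from the self-similarity of $V$ together with the martingale $v(B_{t\wedge T^V})$, and only then identify $c$; but the explicit kernel delivers the constant at once.)
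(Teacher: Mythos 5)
Your proposal is correct, but there is nothing in the paper to compare it with: the paper does not prove Theorem \ref{th:smits} at all, it simply imports the statement from \cite{BaS97}. Your derivation from the explicit series of Theorem \ref{th:hkvertex} is the standard route (and essentially the one in the cited reference), and the bookkeeping checks out: integrating out $\omega$ produces the coefficients $c_i$, the substitution $\rho=\sqrt t\,s$ gives the factor $t^{n/4-1/2}$ so that $t^{\kappa/2}t^{n/4-1/2}=t^{\alpha/2}$, only the $i=1$ term survives by \eqref{eq:besselbound}, and the constant collapses correctly since $\Gamma(1+\alpha)=\Gamma\rpar{\kappa+\tfrac n2}$ and $2^{(n/2+\alpha-1)/2-\alpha}=2^{-\kappa/2}$, yielding $\gamma_V v(x)$. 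The only genuine technical point is the domination/interchange step, and your treatment is adequate: for $t\ge1$ the upper bound in \eqref{eq:besselbound} gives $t^{\alpha/2}J_{\alpha^i}(rs/\sqrt t)\le (rs)^{\alpha^i}e^{rs}/\bigl(2^{\alpha^i}\Gamma(1+\alpha^i)\bigr)$, and Cauchy--Schwarz for $c_i$ together with a polynomial-in-$\lambda^i$ sup-norm bound on the normalized eigenfunctions $m^i$ (obtainable from the Dirichlet heat kernel bound $p_\d(\tau,\theta,\theta)\le C\tau^{-(n-1)/2}$ at $\tau=1/\lambda^i$) and the superexponential decay of $1/\Gamma(1+\alpha^i)$ along the Weyl-growing sequence $\alpha^i$ makes the series summable, with a bound of the form $Ce^{Crs}$ that $e^{-s^2/2}$ renders integrable; the same estimate justifies swapping the sum with the angular integration. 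So the proof is sound as written; since the paper's convention is to quote this result, you could either keep the citation or include your computation as a self-contained proof.
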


\begin{remark}
From this theorem, the scaling property of the heat kernel in $V$, and Harnack's inequality up to the boundary, we get the following  bound:
\begin{align*}
t^{\kappa/2}\PP_x(T^V>t)&=t^{\kappa/2}\PP_{x/\abs{x}} \rpar{T^V> \frac{t}{\abs{x}^2}} \leq C_H t^{\kappa/2}\PP_{\xi} \rpar{T^V>\frac{t}{\abs{x}^2}}\\
&\leq C_H\abs{x}^\kappa \rpar{ \frac{t}{\abs{x}^2} }^{\kappa/2} \PP_{\xi} \rpar{T^V>\frac{t}{\abs{x}^2}}.
\end{align*}
where $\xi\in \d$ is fixed. 
Using \eqref{eq:smits01}, we can pick $t_\xi$ such that whenever $t/\abs{x}^2 > t_\xi$, the right hand side of the last display is bounded by $C\abs{x}^\kappa$, where $C$ depends on our choice of $\xi$.
%
For $t /  \abs{x}^2 \leq t_\xi$, we have that $t^{\kappa/2}\leq t_\xi^{\kappa/2} \abs{x}^\kappa$. We deduce that there is a universal constant $C>0$ such that
\begin{align}
\label{eq:bound01}
t^{\kappa/2}\PP_x(T^V>t) \leq C \abs{x}^\kappa,\qquad x\in V,\ t>0.
\end{align}
By monotoncity of domains, the same inequality holds for $T^\C$, and $x\in \C$, where $\C$ is any truncated cone with opening $\d$.
\end{remark}

The following lemma will be the key tool when extending the previous result to multicones.
\begin{lemma}
\label{le:time_formula}
Let $T$ be the exit time from a multicone set $\O$, let $T^j$ be the exit time from $\O_j$, and pick $x\in\O_i$ for some $i=1,\ldots,\k$. We have 
\begin{align}
\nonumber
\PP_x(T>t) &= \PP_x(T^i>t) + \PP_x(B_t\in\O_0, T>t) + \\
\label{eq:time_formula}
&\qquad + \frac12\sum_{j=1}^\k \int_0^t\int_{\bd_j} \partial_n \PP_z(T^j>t-s)p(s,x,z)\sigma(dz)ds.
\end{align}
\end{lemma}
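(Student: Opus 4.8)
The plan is to decompose the survival event $\{T>t\}$ according to where the Brownian motion is at time $t$, and to track the first exit from the branch $\O_i$ containing the starting point $x$. At time $t$, the event $\{T>t\}$ is the disjoint union of three events: the particle has never left $\O_i$ (contributing $\PP_x(T^i>t)$); the particle has left $\O_i$ but survives and is currently in the core $\O_0$ (contributing $\PP_x(B_t\in\O_0,T>t)$); or the particle has left $\O_i$, survives, and is currently somewhere in a branch $\O_j$ (possibly $j=i$ again). The third case is what produces the sum of boundary integrals in \eqref{eq:time_formula}.

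\textbf{Key steps.} First I would apply the strong Markov property at the exit time $T^i$ from $\O_i$. On $\{T^i<t\}$ the particle is, at time $T^i$, on the boundary $\partial\O_i$; since $\O_i$ is a truncated cone with base $\bd_i$, the relevant part of $\partial\O_i$ on which the particle can exit into the rest of $\O$ (rather than being killed on the lateral boundary, where $\PP_\cdot(T>\cdot)$ vanishes) is the base $\bd_i$. Using the density identity \eqref{eq:partialdensity} from Lemma \ref{le:strongmarkov}, the law of $(B_{T^i},T^i)$ restricted to $\{B_{T^i}\in\bd_i\}$ has density $\tfrac12\partial_n p^i(s,x,z)\,\sigma(dz)\,ds = \partial_n\PP_z(\cdot)$-type kernel against $p^i$; more precisely one writes $\PP_x(B_{T^i}\in d z, T^i\in ds) = \tfrac12\partial_n p^i(s,x,z)\sigma(dz)ds$ on $\bd_i$. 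After the particle reaches $z\in\bd_i$ at time $s$, it must survive in $\O$ for the remaining time $t-s$; by the Markov property this contributes $\PP_z(T>t-s)$. So far this gives
\begin{align*}
\PP_x(T>t) = \PP_x(T^i>t) + \frac12\int_0^t\int_{\bd_i}\partial_n p^i(s,x,z)\,\PP_z(T>t-s)\,\sigma(dz)\,ds.
\end{align*}
The second step is to recurse: apply the same decomposition to $\PP_z(T>t-s)$ for $z$ near $\bd_i$, but this is circular. Instead, the cleaner route is to decompose $\PP_z(T>t-s)$ itself by position at the final time. Actually the slickest approach is the reverse: start from the spatial decomposition $\PP_x(T>t) = \PP_x(B_t\in\O_0,T>t) + \sum_j\PP_x(B_t\in\O_j,T>t)$, handle the core term directly, and for each branch term $\PP_x(B_t\in\O_j,T>t)$ condition on the \emph{last} entry into $\O_j$ through its base $\bd_j$ — but "last entry" is awkward. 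The most robust version: use the strong Markov property at $T^j$ for each branch, but now applied to a time-reversed or duality argument, yielding $\PP_x(B_t\in\O_j, T>t) = \PP_x(B_t\in\O_j, T^j>t)\ind_{i=j} + \tfrac12\int_0^t\int_{\bd_j}\partial_n\PP_z(T^j>t-s)\,p(s,x,z)\,\sigma(dz)\,ds$, where the boundary term accounts for the particle entering $\O_j$ through $\bd_j$ at time $s$ from position $z$ (reached while surviving in $\O$, hence the factor $p(s,x,z)$) and then surviving inside $\O_j$ for the remaining time. Summing over $j$ and adding the core term, and noting $\sum_j\PP_x(B_t\in\O_j,T^j>t)\ind_{i=j} = \PP_x(B_t\in\O_i,T^i>t)$, which combined with $\PP_x(B_t\notin\O_i,T^i>t)=0$ (if $T^i>t$ the particle is still in $\O_i$) gives $\PP_x(T^i>t)$, yields exactly \eqref{eq:time_formula}.

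\textbf{Main obstacle.} The delicate point is justifying the boundary-integral term with the correct orientation: one must correctly identify that a surviving path at time $t$ in branch $\O_j$ decomposes (modulo a null set of paths touching $\partial\O$ tangentially) according to the time $s$ of its last passage through the base $\bd_j$, and that the pre-$s$ and post-$s$ pieces factorize — the pre-$s$ piece being a path from $x$ surviving in $\O$ and arriving at $z\in\bd_j$ (giving $p(s,x,z)$), the post-$s$ piece being a path from $z$ that stays in $\O_j$ for time $t-s$ and whose density in the exit-direction is $\partial_n\PP_z(T^j>t-s)$ via \eqref{eq:partialdensity} applied in $\O_j$. Making "last passage" rigorous is standard (one can instead enumerate excursions in and out of $\O_j$ and sum a geometric-type series, or appeal directly to \eqref{eq:strongmarkov} integrated in $y$ over $\O_j$), but it requires care that the lateral boundary contributes nothing and that interchanging sum/integral is legitimate — both handled by the Gaussian upper bounds already invoked repeatedly in the paper. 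In the write-up I would most likely derive \eqref{eq:time_formula} cleanly by integrating identity \eqref{eq:strongmarkov} of Lemma \ref{le:strongmarkov} (with $U=\O_i$, $O=\O$) in $y$ over all of $\O$, using that $\int_\O p^O(t,z,y)\,dy = \PP_z(T>t)$ and $\int_{\O_i}p^{\O_i}(t,z,y)\,dy=\PP_z(T^i>t)$, then separately decomposing the resulting single boundary integral over $\bd_i$ into the contributions ending in each branch and in the core by one further application of \eqref{eq:strongmarkov}.
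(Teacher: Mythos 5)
Your conceptual idea — decompose $\PP_x(T>t)$ by where the surviving path sits at time $t$ (core or branch), and for each branch $\O_j$ account for the last passage through $\bd_j$ — is correct, and the formula you write in the ``most robust version'' sentence,
\begin{align*}
\PP_x(B_t\in\O_j,T>t) = \delta_{ij}\PP_x(T^i>t) + \tfrac12\int_0^t\int_{\bd_j}\partial_n\PP_z(T^j>t-s)\,p(s,x,z)\,\sigma(dz)\,ds,
\end{align*}
is exactly right: summed over $j$ and added to the core term, it gives \eqref{eq:time_formula}. The clean way to prove that identity is \emph{not}, however, the plan you sketch at the end. If you take $U=\O_i$, $O=\O$ in \eqref{eq:strongmarkov} and integrate $y$ over all of $\O$, you obtain the ``forward'' first-exit identity
\begin{align*}
\PP_x(T>t) = \PP_x(T^i>t) + \tfrac12\int_0^t\int_{\bd_i}\partial_n p^i(s,x,\xi)\,\PP_\xi(T>t-s)\,\sigma(d\xi)\,ds,
\end{align*}
which has a \emph{single} boundary integral over $\bd_i$ only, with the roles of the two factors swapped ($\partial_n$ falls on $p^i$, not on the survival probability, and the survival probability is for $\O$, not $\O_j$). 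Peeling this recursively by ``one further application of \eqref{eq:strongmarkov}'' does not terminate — it produces an infinite excursion expansion, not \eqref{eq:time_formula}. What you actually want is to apply \eqref{eq:strongmarkov} with $U=\O_j$ (not $\O_i$), $z=y\in\O_j$, and the point $x$ as the second argument; integrate $y$ over $\O_j$ only, use $\int_{\O_j}p^j(t,y,x)\,dy=\ind_{\O_j}(x)\PP_x(T^j>t)$, and justify interchanging $\partial_n$ with $\int_{\O_j}dy$ to turn $\int_{\O_j}\partial_n p^j(s,y,\xi)\,dy$ into $\partial_n\PP_\xi(T^j>s)$ via the symmetry of $p^j$. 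This is the correct rigorous form of your last-passage argument.

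For comparison, the paper's proof is analytic rather than probabilistic: it sets $f_j(s)=\int_{\O_j}\PP_z(T^j>t-s)\,p(s,x,z)\,dz$, differentiates in $s$, uses that both factors solve the heat equation, and applies Green's second identity to convert the volume integral to a boundary integral over $\bd_j$; integrating in $s$ from $0$ to $t$ and evaluating $f_j(0)$ and $f_j(t)$ gives \eqref{eq:time_formula}. The nontrivial technical point in the paper, which your proposal does not address, is that $\partial_n\PP_z(T^j>u)$ blows up like $u^{-1/2}$ as $u\to 0$, so the $s$-integral near $s=t$ is improper; the paper controls this via a comparison with the hitting time of a one-dimensional drifted Brownian motion (equivalently, the Bessel radial part), showing the singularity is integrable and the \dct\ applies as $\e\to 0$. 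Your appeal to ``Gaussian upper bounds already invoked repeatedly in the paper'' does not cover this step, since the issue is a short-time boundary singularity, not a large-space tail bound.
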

\begin{proof}
For $j=1,\ldots,k$, and $0\leq s\leq t$ define the functions
$$
f_j(s) = \int_{\O_j} \PP_z(T^j>t-s) p (s,x,z) dz.
$$
For $s<t$, since $u(s,z)=\PP_z(T^j>s)$, and $v(s,z)=p(s,x,z)$ are solutions of the heat equation with dirichlet boundary condition in $\O_j$ and $\O$ respectively, we have by Green's formula
\begin{align*}
\dd{f_j(s)}{s} &= \frac12 \int_{\O_j} -p (s,x,z) \Delta_z \PP_z(T^j>t-s) + \PP_z(T^j>t-s) \Delta_z p (s,x,z) dz \\
&=\frac12 \int_{\bd_j} p (s,x,z) \partial_n \PP_z(T^j>t-s) - \PP_z(T^j>t-s) \partial_n p (s,x,z) \sigma(dz)\\
&=\frac12 \int_{\bd_j} p (s,x,z) \partial_n \PP_z(T^j>t-s) \sigma(dz),
\end{align*}
where, as usual, $\partial_n$ represents the (inward) normal derivative. Then, for every $\e>0$,
\begin{align}
\label{eq:fjota}
f_j(t-\e)-f_j(0) 
&= \frac12 \int_0^{t-\e} \int_{\bd_j} p (s,x,z) \partial_n \PP_z(T^j>t-s) \sigma(dz).
\end{align}
In order to extend this equation to $\e=0$, we need an estimate for $\partial_n \PP_z(T^j>u)$ for $u$ near zero. The process $B$ leaves $\O_j$ before the norm of $B$ hits level $1$. Since $\rho_t=\abs{B_t}$ is a Bessel process, if we let $\tau$ be the hitting time of $1$ for an $n-$dimensional Bessel process, then, for $t<\tau$,
$$
\rho_t = \abs{z} + \beta_t + \frac{n-1}{2}\int_0^t \rho_s^{-1} ds\leq  \abs{z} + \beta_t + \frac{n-1}{2}t,
$$
for some one dimensional Brownian motion $\beta_t$. Let $\tau^\mu$ be the hitting time of zero for the Brownian motion with drift $\beta_t+\mu t$, with $\mu=\frac{n-1}{2}$. It follows that 
$$
\PP_z(T^j>u) \leq \PP_{\abs{z}}(\tau>u)\leq \PP_r(\tau^\mu>u),\qquad r=\abs{z}-1.
$$
As $\PP_z(T^j>u)$ vanishes on $\bd_j$, we get $\partial_n \PP_z(T^j>u)\leq \partial_r\PP_r(\tau^\mu>u)\vert_{r=0}$. The distribution of $\tau^\mu$ is well known (see equation (5.12), pp 197 \cite{KaS91}):
$$
\PP_r(\tau^\mu\in dt) = \frac{r}{\sqrt{2\pi t^3}} \exp\spar{-\frac{(r+\mu t)^2}{2t}} dt,\qquad t>0.
$$
A direct computation shows that
\begin{align*}
\PP_r(\tau^\mu > u) &= \int_u^\infty \frac{r}{\sqrt{2\pi t^3}} \exp\spar{-\frac{(r+\mu t)^2}{2t}} dt, \\
\partial_r \PP_r(\tau^\mu > u)\vert_{r=0} &= \int_u^\infty \frac{1}{\sqrt{2\pi t^3}} \exp\spar{-\frac{\mu^2t}{2}} dt 
\\
&\leq \int_u^1 \frac{t^{-3/2}}{\sqrt{2\pi}} dt + \int_1^\infty \frac{1}{\sqrt{2\pi}} \exp\spar{-\frac{\mu^2 t}{2}} dt \\
&= \sqrt{\frac2{\pi}} \rpar{\frac1{\sqrt u}-1} + \sqrt{\frac{2}{\pi}}\mu^{-2}e^{-\mu^2/2},\qquad u<1,
\end{align*}
which is integrable (in $u$) near zero. Therefore, we can apply the \dct\ in \eqref{eq:fjota} and use the continuity of $f_j$ to deduce that this equation also holds for $\e=0$. Adding all the equations for $j=1,\ldots, \k$, using that $f_j(0)= \ind_{\O_j}(x)\PP_x(T^j>t)$ and $f_j(t)=\PP_x(B_t\in {\O_j},T>t)$, and adding the contribution from $\O_0$, we obtain \eqref{eq:time_formula}.
\end{proof}

\subsection{Proof of Theorem \ref{th:mainexit} for truncated cones}

Theorem \ref{th:smits} is also valid if we change the cone $V=C(a,\d,0)$ for its truncated version $\C=C(a,\d,1)$, but the limit turns out to be $\gamma_V w(x)$, where $w(\cdot)$ is the unique positive harmonic function in $\C$ that vanishes on $\partial \C$, normalized at infinity, such that $\lim_{r\to\infty} r^{-\kappa} w(a+r\theta) = 1$ for any $\theta\in \d$.  Recall that $\bd = a+\d$ is the  base of $\C$.

Indeed, notice that $(t,x)\mapsto \PP_x(T^\C>t)$ solves the heat equation in $ \C$. By Harnack's inequality, the family of functions  $h(t,x) = t^{\kappa/2}\PP_x(T^\C>t)$, indexed by $t>0$, is bounded on compact sets of $\C$. Since $\PP_x(T^\C>t)\leq \PP_x(T^V>t)$, Lemma \ref{le:limitharmonic}
applies and we conclude that any limit point has the form $\mu w(x)$. 
 Of course, the constant $\mu$ may depend on the sequence $(t_k)$ that makes $h(t_k,\cdot)$ converge. Nevertheless, we have $\mu\leq\gamma_V$ by monotonicity of domains, where $\gamma_V$ is the same constant as in Theorem \ref{th:smits}.

On the other hand, from Lemma \ref{le:time_formula},  we have that for all $x\in \C$
\begin{align*}
\PP_x(T^V>t) &= \PP_x(T^\C>t) + \PP_x(\abs{B_t}<1, T^V>t) + \\ 
&\qquad\qquad + \frac12\int_0^t\int_{\bd} \partial_n\PP_z(T^\C>t-s)p^V(s,x,z)\sigma(dz)ds.
\end{align*}
Also,  by Harnack's inequality  $t^{\kappa/2}\PP_x(\abs{B_t}<1, T^V>t) \leq C_1 t^{\kappa/2} p(t+1,x,x_0) $, which converges to zero, as $t\to\infty$ by Theorem \ref{th:mainkernel}. Thus,
\begin{align}
\label{eq:gammavmu}
\gamma_V v(x) \leq \mu w(x) + \varlimsup_{t\to\infty} \frac{t^{\kappa/2} }{2}\int_0^t\int_{\bd_j} \partial_n\PP_z(T^\C>t-s) p^V(s,x,z)\sigma(dz)ds.
\end{align}
We will next show how to control the integral on the right hand side of \eqref{eq:gammavmu}.

By applying Fubini's theorem to the Chapman-Kolmogorov equation for the heat kernel, we get for $t,s>0$,
\begin{align*}
\PP_x(T^\C>t+s) &= \int_\C p^\C(s,x,z) \PP_z(T^\C>t) dz.
\end{align*}
Using the heat kernel of the exterior of a ball we get the upper bound $\partial_x p^\C(s,x,z)\leq Ae^{-B\abs{z}^2}$ for $s>1$ and all $x\in \C$. We can apply the \dct\ to get for $x\in \bd$
\begin{align}
\label{eq:partialtime}
\partial_n\PP_x(T^\C>t+s) &= \int_\C \partial_{n} p^\C(s,x,z) \PP_z(T^\C>t) dz.
\end{align}
Moreover, using \eqref{eq:bound01}, it is easy to obtain the following limit by using again the \dct
$$
\lim_kt_k^{\kappa/2}\partial_n\PP_x(T^\C>t_k)= \mu\int_\C \partial_n p^\C(s,x,z)w(z) dz = \mu\partial_n w(x).
$$
The last equality holds because $w$ is harmonic.

As usual, we split the integral from \eqref{eq:gammavmu} into:
\begin{align*}
I_1(t) &= \int_0^{t/2}\int_{\bd} \partial_n\PP_z(T^\C>t-s) p^V(s,x,z)\sigma(dz)ds \\
&\leq C_H \int_{\bd} \partial_n\PP_z(T^\C>t/2) \sigma(dz) \int_0^{t/2}p^V(s+1,x,\xi)ds.
\end{align*}
This shows that  $\varlimsup_{t\to\infty}  t^{\alpha/2}I_1(t) \leq C_1 G^V(x,\xi)$, where $C_1>0$ is universal.

Also, by the boundary Harnack inequality
\begin{align*}
I_2(t) &= \int_{t/2}^t \int_{\bd} \partial_n\PP_z(T^\C>t-s) p^V(s,x,z)\sigma(dz)ds \\
&\leq C_H \int_{t/2}^t \int_{\bd} \partial_n\PP_z(T^\C>t-s)\sigma(dz)ds\ p^V(t/2,x,x) \\
&\leq C_x t^{-1-\alpha}\int_0^{t/2}\int_{\bd} \partial_n \PP_z(T^\C>s)\sigma(dz) ds \\
\end{align*}
where $C_x$ only depends on $x$.
Using bounds for the exit time for a Bessel process from $[1,\infty)$ as in Lemma \ref{le:time_formula}, 
we get that  $\int_0^1\partial_n\PP_z(T^\C>s)ds \leq Q$, independently of $z\in \bd$. Then
 \begin{align*}
t^{\kappa/2}I_2(t)  &\leq C_x t^{-1-\alpha+\kappa/2}\rpar{ Q\abs{\bd} + \rpar{\frac{t}2-1}\int_{\bd} \partial_n \PP_z(T^\C>1)\sigma(dz)}.
\end{align*}
It follows that $t^{\kappa/2}I_2(t)\to 0$ as $t\to\infty$. Equation \eqref{eq:gammavmu} now reads
$$
\gamma_V v(x) \leq \mu w(x) + C_1G^V(\xi,x),\qquad x\in \C.
$$
Since $G^V(\xi,x)$ remains bounded as $x\to\infty$ radially in $\C$, we deduce that $\gamma_V=\mu$, which proves the asymptotic for the survival probability.  
\qed

\subsection{Proof of Theorem \ref{th:mainexit}}

In formula \eqref{eq:time_formula}, the first term is controlled by our result from the previous section. The second term goes to zero  by using  Harnack's inequality up to the boundary, that is, for some $x_0\in\O_0$,
\begin{align*}
\PP_x(B_t\in\O_0,T>t) &= \int_{\O_0} p(t,x,z)dz \leq C_H\abs{\O_0} p(t+1,x,x_0),
\end{align*}
where $\abs{\O_0}$ stands for the Lebesgue measure of the core $\O_0$.  It follows that $t^{\kappa/2} \PP_x(B_t\in\O_0,T>t)$ converges to zero as $t\to\infty$ for each $x\in\O$.

Next, we deal with the summation terms. In order to do this, we will find limits for the following two objects:
\begin{align*}
t^{\kappa/2}I_1(t) &= t^{\kappa/2}\int_0^{t/2}\int_{\bd_j} \partial_n \PP_z(T^j>t-s) p(s,x,z) \sigma(dz)ds, \\
t^{\kappa/2}I_2(t) &= t^{\kappa/2}\int_0^{t/2}\int_{\bd_j} \partial_n \PP_z(T^j>s) p(t-s,x,z) \sigma(dz)ds. 
\end{align*}
An analogous proof as the one in the last part of the previous section, shows that $t^{\kappa/2}I_2(t)$ converges to zero.

As for $t^{\kappa/2}I_1(t)$, 
if $0\leq s\leq t/2$, our computations in the previous section show that $t^{\kappa/2} \partial_n \PP_z(T^j>t-s) $ converges to $\gamma_{j}\partial_n w_j(z)$ for $z\in \bd_j$ and $j\in\M$, otherwise, it converges to zero. Here,
$$
\gamma_j = \frac{\Gamma \rpar{\frac{\kappa+n}2}}{{2}^{\kappa/2}\Gamma\rpar{\kappa + \frac{n}2}}\int_{\bd _j}m^1_j(\theta) \sigma(d\theta).
$$
To show domination, we use monotonicity of $t\mapsto \partial_{n}\PP_x(T^j>t)$, equation \eqref{eq:partialtime}, the  bound $\partial_n p^\C(1,x,z)\leq Ae^{-B\abs{z}^2}$, and equation \eqref{eq:bound01}. We find that
\begin{align*}
t^{\kappa/2} \partial_n \PP_x(T^j>t-s) 
&\leq  C_3  \int_\C  Ae^{-B\abs{z}^2} \abs{z}^\kappa dz < \infty.
\end{align*}
Thus, by the \dct, we deduce that
\begin{align*}
\lim_{t\to\infty} t^{\kappa/2} I_1(t) &= \gamma_{j} \int_0^\infty\int_{\bd_j}  \partial_n w_j(z)p(s,x,z) \sigma(dz)ds \\
&= \gamma_{j}\int_{\bd_j} \partial_n w_j(z) G(x,z) \sigma(dz) = 2 \gamma_{j} \rpar{u_j(x) - w_j(x)},
\end{align*}
by Fubini's theorem, and equation \eqref{eq:harmonic_extension}.

%
Putting all together,  for $x\in\O$,
\begin{align*}
\lim_{t\to\infty} t^{\kappa/2} \PP_x(T>t) &= 
 \sum_{k\in\M} \gamma_{k} u_k(x),
\end{align*}
which is \eqref{eq:mainexit}.
\qed

\section{Renormalized Yaglom limit for multicones}
\label{se:yaglom}

In what follows, we set $\beta = 1+\alpha +n/2$. Notice that $\beta/2+\kappa/2=1+\alpha$, which will be conveniently used later.

From Theorem \ref{th:hkvertex}, it is straightforward to get that for $x,y\in V=C(0,\d,0)$
\begin{align}
\label{eq:yaglomV}
\lim_{t\to\infty} t^{\beta/2} p^V(t,x,\sqrt{t}y) = \frac{v(x)v(y)}{\zed} e^{-\abs{y}^2/2}.
\end{align}
The limit above holds uniformly in compact sets of $\overline{V}$.

In order to extend this result to multicones, we start with the case of a truncated cone $\C=C(0,\d,1)$.

\begin{lemma}
\label{le:yaglomC}
Let $x,y\in\C$. Then,
\begin{align}
\label{eq:yaglomC}
 \lim_{t\to\infty} t^{\beta/2} p^\C(t,x,\sqrt{t} y) = \frac{w(x)v(y)}{\zed} e^{-\abs{y}^2/2}.
 \end{align}
\end{lemma}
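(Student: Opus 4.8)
The plan is to reproduce the argument of Theorem~\ref{th:vertextruncated}, but in the diffusive scaling $z\mapsto\sqrt{t}\,z$, using \eqref{eq:yaglomV} in the role that Corollary~\ref{co:vertextimelimit} played there. Apply the strong Markov identity \eqref{eq:strongmarkov} with $O=V$ and $U=\C$. Since $p^V(t-s,\xi,\cdot)$ vanishes on the lateral part of $\partial\C$, which lies on $\partial V$, the boundary integral localizes to the base $\bd$, so that
\begin{align*}
t^{\beta/2}p^V(t,x,\sqrt{t}y)=t^{\beta/2}p^\C(t,x,\sqrt{t}y)+I_1(t)+I_2(t),
\end{align*}
where $I_1(t)=\frac12\int_0^{t/2}\int_{\bd}\partial_n p^\C(s,x,\xi)\,t^{\beta/2}p^V(t-s,\xi,\sqrt{t}y)\,\sigma(d\xi)\,ds$ and $I_2(t)$ is the same integral over $s\in[t/2,t]$. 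By \eqref{eq:yaglomV} the left-hand side tends to $v(x)v(y)e^{-|y|^2/2}/\zed$, so it remains to evaluate $\lim I_1$ and $\lim I_2$.

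For $I_1$, note that for $0\le s\le t/2$ the pair (time $=t-s$, space $=\sqrt{t}\,y$) is still in the diffusive regime. Setting $\rho=s/t\in[0,\frac12]$ and rescaling,
\begin{align*}
t^{\beta/2}p^V(t-s,\xi,\sqrt{t}y)=(1-\rho)^{-\beta/2}\,\big(t(1-\rho)\big)^{\beta/2}p^V\!\Big(t(1-\rho),\xi,\sqrt{t(1-\rho)}\,\tfrac{y}{\sqrt{1-\rho}}\Big);
\end{align*}
since $t(1-\rho)\ge t/2\to\infty$ while $y/\sqrt{1-\rho}$ stays in the compact set $\{cy:1\le c\le\sqrt2\}$, the uniform-on-compacts form of \eqref{eq:yaglomV} shows (using $v(y/\sqrt{1-\rho})=(1-\rho)^{-\kappa/2}v(y)$ and $\beta/2+\kappa/2=1+\alpha$) that this quantity is bounded by $2^{1+\alpha}v(y)\sup_{\theta\in\d}m^1(\theta)/\zed$, uniformly in $s\in[0,t/2]$ and $\xi\in\bd$ for large $t$, and that it converges, as $t\to\infty$ with $s$ fixed, to $v(\xi)v(y)e^{-|y|^2/2}/\zed$. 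Since $\frac12\int_0^\infty\int_{\bd}\partial_n p^\C(s,x,\xi)\,\sigma(d\xi)\,ds=\PP_x(B_{T^\C}\in\bd)\le1$, the \dct\ applies and, using $\frac12\partial_n p^\C(s,x,\xi)\,\sigma(d\xi)\,ds=\PP_x(B_{T^\C}\in d\xi,\,T^\C\in ds)$,
\begin{align*}
\lim_{t\to\infty}I_1(t)=\frac{v(y)e^{-|y|^2/2}}{\zed}\,\EE_x\big(v(B_{T^\C})\big).
\end{align*}

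For $I_2$, substitute $u=t-s\in[0,t/2]$. Exactly as in Lemma~\ref{le:normalderivative} (via \eqref{eq:normalderivativechapman}, Corollary~\ref{co:expharnackcone}, the bound $\partial_n p^\C(1,z,\xi)\le Ae^{-B|z|^2}$, and monotonicity of $\tau\mapsto p^V(\tau,x,x)$), one has $\partial_n p^\C(t-u,x,\xi)\le Q_x\,p^V(t/2-1,x,x)$ for $u\le t/2$ and $t$ large, with $Q_x<\infty$ uniform in $\xi\in\bd$, while $p^V(t/2-1,x,x)=O(t^{-1-\alpha})$ by Corollary~\ref{co:vertextimelimit}. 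Bounding $p^V(u,\xi,\sqrt{t}y)\le p^{\RR^n}(u,\xi,\sqrt{t}y)$ and using $|\xi-\sqrt{t}y|\ge\frac12\sqrt{t}|y|$ for large $t$ gives, after the substitution $u=tv$,
\begin{align*}
\int_0^{t/2}\!\!\int_{\bd}p^V(u,\xi,\sqrt{t}y)\,\sigma(d\xi)\,du\le \sigma(\bd)(2\pi)^{-n/2}\,t^{1-n/2}\!\!\int_0^{1/2}\!\!v^{-n/2}e^{-|y|^2/(8v)}\,dv=O(t^{1-n/2}).
\end{align*}
Hence $I_2(t)=O\big(t^{-1-\alpha}\cdot t^{\beta/2}\cdot t^{1-n/2}\big)=O\big(t^{(2-2\alpha-n)/4}\big)\to0$, since $\alpha>0\ge1-\frac n2$.

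Feeding $\lim I_1$ and $\lim I_2=0$ into the splitting and recalling $w(x)=v(x)-\EE_x(v(B_{T^\C}))$ from \eqref{eq:truncatedharmonic},
\begin{align*}
\lim_{t\to\infty}t^{\beta/2}p^\C(t,x,\sqrt{t}y)=\frac{v(y)e^{-|y|^2/2}}{\zed}\big(v(x)-\EE_x(v(B_{T^\C}))\big)=\frac{w(x)v(y)}{\zed}e^{-|y|^2/2},
\end{align*}
which is \eqref{eq:yaglomC}. I expect the main obstacle to be the domination step for $I_1$: the naive Harnack bound on $p^V(t-s,\xi,\sqrt{t}y)$ is exponential in $\sqrt{t}|y|$ and useless, and one really needs the rescaling above combined with the uniform-on-compacts statement of \eqref{eq:yaglomV} to produce a $t$-uniform bound. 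Handling $I_2$ is then a routine power count, but it does rely on the target point $\sqrt{t}y$ receding to infinity, which makes the Green mass of $V$ near $\bd$ seen from $\sqrt{t}y$ negligible.
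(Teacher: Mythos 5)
Your proposal is correct and follows essentially the same route as the paper: the strong Markov identity \eqref{eq:strongmarkov} with $O=V$, $U=\C$, a split of the time integral at $t/2$, dominated convergence against $\PP_x(B_{T^\C}\in d\xi, T^\C\in ds)$ using the rescaled limit \eqref{eq:yaglomV} for the small-$s$ piece, a power-count showing the large-$s$ piece vanishes, and the identification $w(x)=v(x)-\EE_x(v(B_{T^\C}))$ from \eqref{eq:truncatedharmonic}. The only (immaterial) difference is in the domination devices: the paper uses the explicit series bound \eqref{eq:yaglom_bound_pV} for the first piece and boundedness of $G^V(x,\sqrt{t}y)$ for the second, while you use scaling plus the uniform-on-compacts form of \eqref{eq:yaglomV} and a free Gaussian estimate, respectively.
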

\begin{proof} 
As in the proof of Lemma \ref{le:normalderivative}, we have that $t^{1+\alpha}\partial_n p^\C(t,x,\xi) \leq Q_x$ for all $t>t_0$, $\xi\in\bd$. Thus, using the boundary Harnack inequality, there exist $C_x>0$ only dependent on $x$, such that
\begin{align}
\label{eq:yaglom01}
\frac{t^{\beta/2}}{2} \int_0^{t/2}\int_{\bd} \partial_n p^\C(t-s,x,\xi) p^V(s,\xi,\sqrt{t} y) d\xi ds &\leq C_x G^{V}(x,\sqrt{t}y) t^{{\beta/2}-(1+\alpha)}.
\end{align}
For large $t$, the quantity $G^V(x,\sqrt{t}y)$ is bounded, and as ${\beta/2}-(1+\alpha)=-\kappa/2<0$, we deduce that
\begin{align}
\lim_{t\to \infty} \frac{t^{\beta/2}}{2} \int_0^{t/2}\int_{\bd} \partial_n p^\C(t-s,x,\xi) p^V(s,x,\sqrt{t} y) d\xi ds = 0.
\end{align}

On the other hand, from Theorem \ref{th:hkvertex}, it is direct to find the bound
\begin{align}
\label{eq:yaglom_bound_pV}
t^{\beta/2} p^V(t,x,\sqrt{t}y) \leq C \sum_{i=1}^\infty \frac{(\abs{x}\abs{y})^{\alpha^i-\rpar{\frac{n}{2}-1}}}{2^{\alpha^i}\Gamma(1+\alpha^i)} < \infty,
\end{align}
for $t>2$ and some universal constant $C>0$. It follows by \eqref{eq:yaglomV} and the \dct\ that
\begin{align}
\nonumber
\lim_{t\to \infty} \frac{t^{\beta/2}}{2} \int_0^{t/2}\int_{\bd} &\partial_n p^\C(s,x,\xi) p^V(t-s,\xi,\sqrt{t} y) d\xi ds =  \\ 
\nonumber 
&= \frac{v(y)e^{-\abs{y}^2/2}}{\zed} \int_0^\infty\int_\bd \frac{1}{2} \partial_n p^\C(s,x,\xi)v(\xi) d\xi ds \\
&= e^{-\abs{y}^2/2} \frac{v(y)\EE_x(v(B_{T^\C}))}{\zed}
\end{align}

Plugging the last two equations into \eqref{eq:strongmarkov}, with $O=V$ and $U=\C$, we obtain
\begin{align*}
\lim_{t\to\infty} t^{\beta/2} p^\C(t,x,\sqrt{t}y) &= \lim_{t\to\infty} t^{\beta/2} p^V(t,x,\sqrt{t}y)  - e^{-\abs{y}^2/2} \frac{v(y)\EE_x(v(B_{T^\C}))}{\zed},
\end{align*}
from where \eqref{eq:yaglomC} is direct to deduce by using \eqref{eq:truncatedharmonic}.
\end{proof}

\begin{lemma}
We have for each $y\in V$
\label{le:yaglomN}
\begin{align}
\label{eq:yaglomNsup}
\sup_{t>2}\sup_{\xi\in \bd} t^{\beta/2} \partial_n p^\C(t,\sqrt{t}y,\xi) <\infty
\end{align}
Also, for $y\in \C$ and $\xi\in\bd$,
\begin{align}
\label{eq:yaglomN}
&\lim_{t\to\infty} t^{\beta/2} \partial_n p^\C(t,\sqrt{t}y,\xi) = \frac{\partial_n w(\xi) v(y)}{\zed}e^{-\abs{y}^2/2}.
\end{align}
The limit holds in the sense of uniform convergence in compact sets.
\end{lemma}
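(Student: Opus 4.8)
The plan is to mimic the structure of the proof of Lemma~\ref{le:yaglomC}, applying the Chapman--Kolmogorov decomposition \eqref{eq:strongmarkov} with $O=V$ and $U=\C$ to the kernel $p^\C(t,\sqrt{t}y,\xi)$ (or rather to its normal derivative at $\xi\in\bd$), but now keeping track of the normal derivative on $\bd$ rather than the kernel itself. First I would establish \eqref{eq:yaglomNsup}: starting from the Chapman--Kolmogorov identity $\partial_n p^\C(t+1,\sqrt{t}y,\xi) = \int_\C p^\C(t,\sqrt t y,z)\,\partial_n p^\C(1,z,\xi)\,dz$ as in \eqref{eq:normalderivativechapman}, bound $\partial_n p^\C(1,z,\xi)\le A e^{-B|z|^2}$ (the exterior-ball bound used repeatedly above), and control $t^{\beta/2}p^\C(t,\sqrt{t}y,z)$ by $t^{\beta/2}p^V(t,\sqrt t y, z)$, for which \eqref{eq:yaglom_bound_pV}-type bounds from Theorem~\ref{th:hkvertex} give a uniform-in-$t$ majorant (after a harmless time shift and using that $t\mapsto$ the relevant quantities are comparable on $[t,t+1]$ via Lemma~\ref{le:cms99} / Lemma~\ref{le:cms00}). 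The Gaussian decay in $z$ makes the $z$-integral finite and uniform in $\xi\in\bd$, giving \eqref{eq:yaglomNsup}.

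For the limit \eqref{eq:yaglomN}, I would again write $\partial_n p^\C(t,\sqrt t y,\xi)$ via \eqref{eq:normalderivativechapman}, i.e. $\partial_n p^\C(t+1,\sqrt t y,\xi)=\int_\C p^\C(t,\sqrt t y,z)\,\partial_n p^\C(1,z,\xi)\,dz$, multiply by $t^{\beta/2}$, and pass to the limit under the integral sign by the \dct. The pointwise convergence of the integrand is exactly Lemma~\ref{le:yaglomC}: $t^{\beta/2}p^\C(t,\sqrt t y,z)\to \frac{w(z)v(y)}{\zed}e^{-|y|^2/2}$ for each $z\in\C$ (using $w(\sqrt t y)$ in the first slot — here I use symmetry of $p^\C$ to put $\sqrt t y$ in whichever slot is convenient, and the roles of the harmonic function $w$ versus $v$ track which variable is scaled). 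The domination is the bound from the previous paragraph together with the Gaussian factor $\partial_n p^\C(1,z,\xi)\le Ae^{-B|z|^2}$. Therefore
\begin{align*}
\lim_{t\to\infty} t^{\beta/2}\partial_n p^\C(t,\sqrt t y,\xi) &= \frac{v(y)e^{-|y|^2/2}}{\zed}\int_\C w(z)\,\partial_n p^\C(1,z,\xi)\,dz.
\end{align*}
The remaining integral equals $\partial_n w(\xi)$: since $w$ is harmonic in $\C$ and vanishes on $\partial\C$, the function $z\mapsto \int_\C w(z')p^\C(s,z',z)dz'$ equals $w(z)$ for all $s>0$ (the semigroup fixes the harmonic function with Dirichlet data, as already used at the end of the proof in Section~\ref{se:exit} for $\PP_\cdot(T^\C>t)$ and $w$), and differentiating in the normal direction at $\xi\in\bd$ and taking $s=1$ gives $\int_\C w(z')\partial_n p^\C(1,z',\xi)dz' = \partial_n w(\xi)$; by symmetry of $p^\C$ this is the same integral. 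This yields \eqref{eq:yaglomN}.

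Uniform convergence on compact sets in $y$ (and in $\xi\in\bd$, which is already compact) follows from the equicontinuity statement of Lemma~\ref{le:cms00}: the family $t^{\beta/2}p^\C(t,\cdot,\cdot)$ — equivalently its normal derivative, which again solves the heat equation — is locally bounded (by the sup bound \eqref{eq:yaglomNsup} after accounting for the $\sqrt t$ scaling, which is handled exactly as in the proof of \eqref{eq:yaglomV} and Lemma~\ref{le:yaglomC}), so pointwise convergence upgrades to uniform convergence on compacts. The main obstacle I anticipate is bookkeeping the diagonal scaling $\sqrt t y$ correctly when invoking the domination and the time-shift comparability: one must check that the majorant obtained from Theorem~\ref{th:hkvertex} is genuinely uniform in $t$ after replacing $t$ by $t-1$ (or $t/2$) in the Chapman--Kolmogorov step, and that differentiating the identity $\int w\,p^\C = w$ under the integral/across the boundary is justified — the latter is exactly the kind of \dct\ argument already performed for \eqref{eq:partialtime}, using the exterior-ball Gaussian bound on $\partial_n p^\C(1,\cdot,\xi)$, so it should go through without new ideas.
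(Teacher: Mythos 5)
Your proposal follows essentially the same route as the paper's proof: the Chapman--Kolmogorov identity \eqref{eq:yaglom_normal} for the normal derivative, the Gaussian bound $\partial_n p^\C(1,z,\xi)\le Ae^{-B\abs{z}^2}$ together with the series bound \eqref{eq:yaglom_bound_pV} for domination, and then the \dct\ with Lemma \ref{le:yaglomC} as the pointwise limit, identifying $\int_\C w(z)\,\partial_n p^\C(1,z,\xi)\,dz=\partial_n w(\xi)$ by harmonicity of $w$ (an identity the paper itself uses in Section \ref{se:exit}). Your write-up is correct and in fact spells out the harmonic-invariance step and the $t$ versus $t+1$ bookkeeping more explicitly than the paper does.
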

\proof
Recall that, from the bound for the heat kernel of the exterior of a ball, and monotonicity of domains\\
\begin{align}
\label{eq:yaglom_normal}
t^{\beta/2} \partial_np^\C(t+1,\sqrt{t}y,\xi) &= t^{\beta/2} \int_\C \partial_np^\C(1,z,\xi) p^\C (t,\sqrt{t}y,z) dz \\
&\leq t^{\beta/2} \int_\C Ae^{-B\abs{z}^2} p^V (t,\sqrt{t}y,z) dz \\
&\leq AC\int_\C e^{-B\abs{z}^2} \sum_{i=1}^\infty \frac{(\abs{z}\abs{y})^{\alpha^i-\rpar{\frac{n}{2}-1}}}{2^{\alpha^i}\Gamma(1+\alpha^i)}
\end{align}
where the last inequality follows from \eqref{eq:yaglom_bound_pV}. From here, using bounds for the moment of gaussian random variables, we arrive at the bound:
\begin{align}
t^{\beta/2}\partial_np^\C(t+1,\sqrt{t}y,\xi) &\leq C_6 \sum_{i=1}^\infty \frac{C_7^{\alpha^i+n/2} \abs{y}^{\alpha^i-\rpar{\frac{n}{2}-1}}}{2^{\alpha^i/2}\Gamma(1+\alpha^i)} \Gamma\rpar{\frac{1+\alpha^i}{2}+\frac{n}{4}},
\end{align}
which is finite.

The same steps as above show that it is possible to apply the \dct\ in \eqref{eq:yaglom_normal}. Equation \eqref{eq:yaglomN} then follows from Lemma \ref{le:yaglomC}.
\qed

\subsection{Proof of Theorem \ref{th:mainyaglom}}

\proof As before, our starting point is equation \eqref{eq:strongmarkov}. We will study the rate of decay of the integral involved in such equation by splitting in two terms, as before.
%
First, let us study
\begin{align}
t^{\beta/2} I_1(t) = t^{\beta/2} \int_0^{t/2} \int_{\bd_j} \frac{1}{2} \partial_n p^j(t-s,a_j+\sqrt{t}y,\xi) p(s,x,\xi) d\xi ds.
\end{align}
Using Lemma \eqref{le:yaglomN}, we see that we can apply the \dct\ to this integral to obtain
\begin{align}
\nonumber
\lim_{t\to\infty} t^{\beta/2} I_1(t) &= \ind_\M(j) \int_0^\infty\int_{\bd_j} \frac12 \frac{\partial_n w_j(\xi)e^{-\abs{y}^2/2} v_j(y)}{2^{\alpha}\Gamma(1+\alpha)} p(s,x,\xi) d\xi ds \\
\nonumber
&=  \ind_\M(j) \frac{e^{-\abs{y}^2/2} v_j(y)}{2^{\alpha}\Gamma(1+\alpha)}\int_{\bd_j} \frac12 \partial_n w_j(\xi) G(x,\xi) d\xi ds \\
\label{eq:yaglom_I1}
&= \ind_\M(j) \frac{e^{-\abs{y}^2/2} v_j(y)}{2^{\alpha}\Gamma(1+\alpha)} \rpar{u_j(x)-w_j(x)}.
\end{align}

Second, we look at
\begin{align}
t^{\beta/2} I_2(t) &= t^{\beta/2} \int_0^{t/2} \int_{\bd_j} \frac{1}{2}  \partial_n p^j(s,a_j+\sqrt{t}y,\xi) p(t-s,x,\xi) d\xi ds \\
\nonumber
&= t^{-\kappa/2} \int_0^{t/2} \int_{\bd_j} \frac{1}{2}  \partial_n p^j(s,a_j+\sqrt{t}y,\xi) t^{1+\alpha}p(t-s,x,\xi) d\xi ds
\end{align}
From Theorem \ref{th:mainkernel}, we have that $t^{1+\alpha}p(t-s,x,\xi)\leq C_x$ for all $s\in [0,t/2]$ as long as $t>3$. The constant $C_x$ depends only on $\abs{x}$. Then
\begin{align*}
t^\beta I_2(t)  &\leq C_x t^{-\kappa/2} \int_0^{t/2} \int_{\bd_j} \frac{1}{2}  \partial_n p^j(s,a_j+\sqrt{t}y,\xi) d\xi ds \\
&= C_x t^{-\kappa/2}  \PP_{\sqrt{t}y}(B_{T^j} \in \bd_j,T^j < t/2) \leq C_x t^{-\kappa/2},
\end{align*}
which converges to zero.

Putting together equation \eqref{eq:strongmarkov}, Lemma \ref{le:yaglomC}, and the last estimates, we obtain
\begin{align}
\lim_{t\to \infty} t^{\beta/2} p(t,x,a_j + \sqrt{t}y) &= \ind_{\M}(j) \frac{u_j(x)v_j(y)}{\zed} e^{-\abs{y}^2/2},
\end{align}
as desired.
\qed

\subsection{Distributional convergence of the renormalized process}
\newcommand{\mus}{n/2}

Theorem \ref{th:mainyaglom} suggests that, when conditioned on survival, most of the trajectories  of Brownian motion at time $t$ stay within order $\sqrt{t}$ from the origin. Thus, it is natural to study the convergence of the rescaled process $B_t/\sqrt{t}$ conditioned on survival. 

Let $A\sub V_j$ be a precompact, Borel set. Notice that $\beta-\kappa=n$. Then, by a simple change of variable
\begin{align*}
\PP_x\rpar{ (B_t-a_j)/\sqrt{t}\in A \vert T>t} &= \int_{\sqrt{t}A} \frac{ p(t,x,a_j+z) }{\PP_x(T>t)} dz \\
&= \int_A \frac{p(t,x,a_j+\sqrt{t}y)}{\PP_x(T>t)} t^{n/2}dy \\
&= \int_A \frac{t^{\beta/2} p(t,x,a_j+\sqrt{t}y)}{t^{\kappa/2}\PP_x(T>t)} dy,
\end{align*}
By Theorems \ref{th:mainexit} and \ref{th:mainyaglom}, the integrand on the right hand side converges to the function
\begin{align}
\label{eq:yaglom_limit}
p_x(j,y) &=  \frac{v_j(y)e^{-\abs{y}^2/2}}{\gamma_j\zed}  \cdot \frac{\gamma_j u_j(x)}{\sum_{k\in\M} \gamma_k u_k(x)} \ind_{V_j}(y).
\end{align}
Equation \eqref{eq:yaglom_limit} defines a  probability distribution function on $\M\times \cup_{j\in\M} V_j$ for a family of random variables $X^x=(X^x_1,X^x_2)$, with $x\in\O$, which is simple to interpret. Fix $x\in\O$ and let $X^x_1$ be a discrete random variable with distribution given by
\begin{align}
\label{eq:yaglom_discrete}
\PP(X^x_1 = j) &= \frac{\gamma_j u_j(x)}{\sum_{k\in\M} \gamma_k u_k(x)},\qquad j\in\M.
\end{align}
This is a sample of one of the maximal branches of the multicone. As $t\to\infty$ the multicone $\O_j$ is rescaled into the cone with vertex $V_j$. Correspondingly, we define $X^x_2$ as a continuous random variable on $ \cup_{j\in\M} V_j$ satisfying
\begin{align}
\label{yaglom_conditional}
\PP(X^x_2 \in dy \vert X^x_1 = j) &=   \frac{v_j(y)e^{-\abs{y}^2/2}}{\gamma_j\zed} \ind_{V_j}(y).
\end{align}

Our computation at the beginning of the section, and the uniform convergence on compact sets shows that, under $\PP_x$,  the renormalized process $B_t/\sqrt{t}$ conditioned on survival converges weakly to $X^x$.

\bibliographystyle{plain}
\bibliography{bibliography}   

\end{document}